% Surface tension
% Final draft
% Peter Morfe

\documentclass[12pt]{amsart}
\usepackage{amssymb,latexsym,mathtools}
\usepackage[margin=1.25in]{geometry}
\usepackage{mathrsfs}
\usepackage{mathtools}
\usepackage{graphicx}
\usepackage{esint}
\usepackage{braket}
\newtheorem{theorem}{Theorem}
\newtheorem{lemma}{Lemma}
\newtheorem{definition}{Definition}

\newtheorem{prop}{Proposition}

\newtheorem{remark}{Remark}
\newtheorem{corollary}{Corollary}

\begin{document}

\title[Surface Tension and $\Gamma$-Convergence]{Surface Tension and $\Gamma$-Convergence of Van der Waals-Cahn-Hilliard Phase Transitions in Stationary Ergodic Media}
\author[P.\ Morfe]{Peter S.\ Morfe}

\maketitle

\begin{abstract} We study the large scale equilibrium behavior of Van der Waals-Cahn-Hilliard phase transitions in stationary ergodic media.  Specifically, we are interested in free energy functionals of the following form
\begin{equation*}
\mathcal{F}^{\omega}(u) = \int_{\mathbb{R}^{d}} \left(\frac{1}{2} \varphi^{\omega}(x,Du(x))^{2} + W(u(x)) \right) \, dx,
\end{equation*}
where $W$ is a double-well potential and $\varphi^{\omega}(x,\cdot)$ is a stationary ergodic Finsler metric.  
We show that, at large scales, the random energy $\mathcal{F}^{\omega}$ can be approximated by the anisotropic perimeter associated with a deterministic Finsler norm $\tilde{\varphi}$.  To find $\tilde{\varphi}$, we build on existing work of Alberti, Bellettini, and Presutti, showing, in particular, that there is a natural sub-additive quantity in this context.
\end{abstract}

\section{Introduction}

\subsection{Overview}  In this work, we analyze the large scale equilibrium behavior of Van der Waals-Cahn-Hilliard phase transitions in stationary ergodic media.  The free energy is described by functionals of the form
\begin{equation} \label{E: energy}
\mathcal{F}^{\omega}(u) = \int_{\mathbb{R}^{d}} \left(\frac{1}{2} \varphi^{\omega}(x,Du(x))^{2} + W(u(x)) \right) \, dx,
\end{equation}
where the argument $u$ is a scalar function taking values in $[-1,1]$, $W$ is a double well potential with wells of equal depth, and $\varphi^{\omega}(x,\cdot)$ is a stationary ergodic Finsler metric.  

Functionals like \eqref{E: energy} arise in diffuse-interface or phase-field models of phase transitions (cf.\ the original papers \cite{van der Waals}, \cite{cahn hilliard}, \cite{allen cahn}, the surveys \cite{langer}, \cite{widom}, and the book \cite{diffuse interface book}) and are closely related to Ising spin systems.  In this phenomenological mesoscopic scale theory, $u(x)$ describes the state of the material at the point $x$ and the two minima of $W$ correspond to the equilibrium states.  The gradient term in \eqref{E: energy} imposes a penalty on configurations $u$ that transition between the two equilibria.  Effectively, this puts a constraint on the shape of energy minimizing configurations.  

We are interested in the role played by the randomness in determining the shape of the transition regions of energy minimizers in large domains, particularly whether or not averaging effects allow these to be described by an effective macroscopic model.  

%In order to ignore the trivial contribution of the minima of $W$, we assume that $\min\{W(u) \, \mid \, u \in \mathbb{R}\} = 0$.  (However, see the discussion in Section \ref{S: mechanics_connection}.)  Moreover, for definiteness, we let $W^{-1}(\{0\}) = \{-1,1\}$.  We also assume that the matrix field $a$ is uniformly elliptic.  

Classically, in spatially homogeneous media, that is, when $\varphi^{\omega}(x,\cdot)$ does not depend on $x$ or $\omega$, Modica and Mortola \cite{first paper} proved that the leading order behavior of $\mathcal{F}^{\omega}$ at large scales is described by a surface energy functional measuring the transition region.  Roughly speaking, if the transition region of $u$ is concentrated in a neighborhood of $RF$, where $F \subseteq \mathbb{R}^{d}$ is a hypersurface and $R > 0$ is large, then the energy of $u$ is at least $R^{d - 1} \mathscr{E}(F)$, where $\mathscr{E}$ is the functional defined by 
\begin{equation} \label{E: surface energy}  
\mathscr{E}(F) = \int_{\partial F} \tilde{\varphi}(\nu_{F}(\xi)) \, \mathcal{H}^{d -1}(d \xi)
\end{equation}
and $\tilde{\varphi}$ is a one-homogeneous convex function we will refer to as the surface tension.  
In effect, at large scales, the shape of the transition region of a minimizing configuration is related to the minimal surfaces of \eqref{E: surface energy}.  As long as we are working in the spatially homogeneous context, $\tilde{\varphi}$ equals $\varphi$ scaled by a constant depending only on $W$.  

In stationary ergodic media, we prove below that a result of this type still holds except that $\tilde{\varphi}$ now depends on the the average behavior of $\mathcal{F}^{\omega}$.

The proof in spatially homogeneous media is essentially one-dimensional in nature, and the surface tension can be characterized via the so-called optimal profile problem:
\begin{equation*}
\tilde{\varphi}(e) = \min \left\{ \int_{-\infty}^{\infty} \left( \frac{1}{2} \varphi(e)^{2} q'(s)^{2} + W(q(s)) \right) \, ds \, \mid \, \lim_{s \to \pm \infty} q(s) = \pm 1 \right\}.
\end{equation*} 
The point is once we understand the best one-dimensional configurations, it is possible to estimate the energy of an arbitrary configuration $u$ by studying its one-dimensional slices (see \cite{alberti guide} and \cite{free discontinuity problems}).  Here it is essential that the functional \eqref{E: energy} does not change when we translate space.   

In fact, in the spatially homogeneous setting, the solutions of the optimal profile problem carry a lot of information.  When $W$ is sufficiently regular and $\varphi$ is Riemannian, the optimal profile problem has a unique minimizer, often called the standing wave.  The standing wave is useful not only in the study of the asymptotics of the free energy \eqref{E: energy} but also in understanding the long-time behavior and scaling limit of its gradient flow (cf.\ \cite{fife mcleod}, \cite{crazy de masi presutti paper}, \cite{barles souganidis}, \cite{scaling limits}). 

Although in periodic media plane-like minimizing configurations appear to be the natural replacement of the standing wave (cf.\ \cite{valdinoci de la llave} and the references therein), in random media it is not clear that plane-like minimizers exist, and, in fact, there are examples to show that they do not exist in general when $d = 1$.  Nonetheless, in a slightly different context, it has been known for some time that the large scale behavior of functionals like \eqref{E: energy} can be understood without resorting to slicing arguments or computations involving plane-like minimizers.  In their study of the (spatially homogeneous) Lebowitz-Penrose functional, Alberti and Bellettini showed that it is possible to find the surface tension by studying the optimal energy of planar interfaces on infinite cylinders \cite{non-local_anisotropic}.  

Later, a similar approach was used by Ansini, Braides, and Chiad\`{o} Piat \cite{periodic_paper}, who proved that, in periodic media, $\mathcal{F}^{\omega}$ can still be described by an effective surface tension.  The formula for the surface tension obtained in \cite{periodic_paper} involves averages of the energy of planar configurations in larger and larger cubes, which converge because the quantities involved are sub-additive up to an asymptotically negligible error.  In the stationary ergodic context, similar formulas have been derived recently for related interface models by Alicandro, Cicalese, and Ruf  \cite{magnetic domains} and Cagnetti, Dal Maso, Scardia, and Zeppieri \cite{stochastic homogenization free discontinuity}.  Both of these works find formulas for the surface tension using slight modifications of the quantities considered in \cite{periodic_paper} that turn out to be genuinely sub-additive and, thus, can be analyzed using the sub-additive ergodic theorem.  The fundamental difficulty \cite{magnetic domains} and \cite{stochastic homogenization free discontinuity} share in common with the present  work is it is necessary to find a way to apply the sub-additive ergodic theorem to a quantity that scales like area rather than volume.  

Here that difficulty is resolved by revisiting the ideas of Alberti and Bellettini.  Although they did not mention it explicitly, sub-additivity is lurking in the background in their formula for the surface tension.  This is clarified in Presutti's book \cite{scaling limits}, where it is implicit in his arguments.  

We extend Presutti's interpretation of Alberti and Bellettini's formula and use it to find the surface tension of \eqref{E: energy} in stationary ergodic media.  Our approach is both similar and rather different than that in \cite{magnetic domains} and \cite{stochastic homogenization free discontinuity}.  In particular, what we prove clarifies the role played by sub-additivity in problems of this type and shows that averaging over cylinders is in some ways more natural than averaging over cubes.

Together with \cite{magnetic domains} and \cite{stochastic homogenization free discontinuity}, the present paper can be seen as an extension of Dal Maso and Modica's variational framework (see \cite{nonlinear stochastic homogenization}) to stochastic homogenization problems in an SBV setting.  As in the Sobolev space case, there are compactness theorems and integral representation formulas for sufficiently nice functionals on SBV.  Moreover, there are results to the effect that such functionals are uniquely determined by their values on interfaces with planar boundary traces, the SBV analogue of linear boundary data.  In the setting considered here, \cite{periodic_paper} provides both of these ingredients.  Therefore, to carry out Dal Maso and Modica's program, it only remains to show that planar configurations homogenize, or, in other words, to determine the macroscopic surface tension $\tilde{\varphi}$.

%In addition to \cite{magnetic domains} and \cite{stochastic homogenization free discontinuity}, a number of other works have recently treated interface energies in random media.  Dirr and Orlandi \cite{dirr} also considered Van der Waals-Cahn-Hilliard phase transitions, but, in contrast to \eqref{E: energy}, they treated functionals where the potential $W$ is randomly perturbed instead of the metric.  Since the minima of the potential in that models varies from point to point, they undertook a careful study of the global equilibria of the energy functional, an issue we avoid here.  They also showed that as long as the perturbation is small enough, the randomness has no effect on the surface tension.  A key difference between their work and ours is the underlying environment in their case has a finite range of dependence.
%
%In a discrete context, Gold \cite{gold} and Wouts \cite{wouts} considered percolation and Ising models and obtained the surface tension in a manner similar to what is done here.  As in \cite{dirr}, they consider media with a finite range of dependence.  This enabled them to prove the convergence of the mean energy first and then to control the fluctuations using concentration inequalities.  Our work, together with \cite{magnetic domains} and \cite{stochastic homogenization free discontinuity}, suggests that ergodicity is enough to ensure this convergence.  However, by using the finite range assumption, both Gold and Wouts are able to answer a number of questions not considered here, such as large deviations.    

Before proceeding to our assumptions and main results, we briefly review a number of other recent treatments of interface energies in random media.  Dirr and Orlandi \cite{dirr} consider an energy functional similar to ours, except that the potential $W$ is random instead of the metric.  Assuming that the random potential is a sufficiently small random perturbation of a deterministic one and that the randomness has a finite range of dependence, they studied not only the surface tension but also the global equilibria of the energy, an issue we avoid here.  In a discrete context, Gold \cite{gold} and Wouts \cite{wouts} study the surface tension of percolation and Ising models, respectively, again in random media with a finite range of dependence.  The basic strategy for obtaining the surface tension in those two papers is very similar to ours, except that they prove the convergence of the mean energy first and then use concentration inequalities to control the fluctuations.  Our work, together with \cite{magnetic domains} and \cite{stochastic homogenization free discontinuity}, suggests that, at least where the determination of the surface tension is concerned, there is generally no need to impose a finite correlation length or quantitative mixing assumptions on the underlying environment.  

\subsection{Assumptions}  Throughout we fix constants $\lambda, \Lambda \in (0,\infty)$.  We let $\mathscr{F}_{d}(\lambda,\Lambda)$ be the set of continuous functions $\varphi : S^{d - 1} \to [0,\infty)$ such that
\begin{itemize}
\item[(i)] $\sqrt{\lambda} \leq \varphi(e) \leq \sqrt{\Lambda}$ if $e \in S^{d -1}$
\item[(ii)] The one-homogeneous extension of $\varphi$ to $\mathbb{R}^{d}$ is convex
\end{itemize}
 We equip $\mathscr{F}_{d}(\lambda,\Lambda)$ with the Borel $\sigma$-algebra induced by the supremum norm topology.  
 
 We assume that $(\Omega,\mathscr{B},\mathbb{P})$ is a probability space equipped with a measurable map $\Phi: \Omega \to \mathscr{F}_{d}(\lambda,\Lambda)$ and an action $\tau : \mathbb{R}^{d} \times \Omega \to \Omega$. 
 
 Our assumptions on $\tau$ are:
 \begin{itemize}
 \item[(i)] $\tau$ is measurable with respect to the product $\sigma$-algebra on $\mathbb{R}^{d} \times \Omega$ obtained from the Lebesgue measurable sets on $\mathbb{R}^{d}$ and $\mathscr{B}$ on $\Omega$
 \item[(ii)] $\tau$ forms a group under composition, that is,
  \begin{equation*}
 \tau_{x + y} = \tau_{x} \circ \tau_{y} \quad \text{if} \, \, x, y \in \mathbb{R}^{d}, \quad \tau_{0} = \text{Id}.
 \end{equation*}
 \item[(iii)] $\tau$ preserves $\mathbb{P}$ in the following sense:
 \begin{equation*}
 \mathbb{P}(\tau_{x}^{-1}(E)) = \mathbb{P}(E) \quad \text{if} \, \, E \in \mathscr{B}, \, \, x \in \mathbb{R}^{d}.
 \end{equation*}
 \item[(iv)] $\tau$ is ergodic: specifically, if $E \in \mathscr{B}$ and $\tau_{x}^{-1}(E) = E$ independently of the choice of $x \in \mathbb{R}^{d}$, then $\mathbb{P}(E) \in \{0,1\}$.  
 \end{itemize}
 We remark that the ergodicity assumption (iv) is not strictly necessary.  As in \cite{magnetic domains} and \cite{stochastic homogenization free discontinuity}, if we remove (iv), then a version of our result still holds with the caveat that the surface tension remains a random process in the limit. 
 
The Finsler metric $\varphi^{\omega} : \mathbb{R}^{d} \times \mathbb{R}^{d} \to [0,\infty)$ appearing in \eqref{E: energy} is related to $\Phi$ and $\tau$ by 
 \begin{equation*}
 \varphi^{\omega}(x,p) = \left\{ \begin{array}{r l}
 						\|p\| \Phi^{\tau_{x}\omega} \left(\frac{p}{\|p\|}\right), & p \in \mathbb{R}^{d} \setminus \{0\} \\
						0, & \text{otherwise}
					\end{array} \right.
 \end{equation*}
 Notice that $\varphi^{\omega}(x,\cdot)$ is convex for each fixed $x \in \mathbb{R}^{d}$, and $\varphi^{\omega}(\cdot,p)$ is Lebesgue measurable for each $p \in \mathbb{R}^{d}$.  
Moreover, $\varphi^{\omega}$ is stationary in the following manner:
 \begin{equation*}
 \varphi^{\tau_{y} \omega}(x,\cdot) = \varphi^{\omega}(x + y,\cdot) \quad \text{if} \, \, x,y \in \mathbb{R}^{d}.
 \end{equation*}  
 
Regarding the potential $W$ in \eqref{E: energy}, we only assume the following:
 \begin{itemize}
 \item[(i)] $W : [-1,1] \to [0,\infty)$ is continuous
 \item[(ii)] $W^{-1}(\{0\}) = \{-1,1\}$
 \end{itemize}

\subsection{Main results}  Our results are stated in the language of $\Gamma$-convergence.  See the textbooks of Dal Maso \cite{dal maso} or Braides \cite{gamma for beginners} for an introduction to the subject.

Before proceeding further, we define rescaled energy functionals $(\mathcal{F}^{\omega}_{\epsilon})_{\epsilon > 0}$ and introduce the notion of localization.  First, for each $\epsilon > 0$, we define $\mathcal{F}^{\omega}_{\epsilon}$ by 
\begin{equation*}
\mathcal{F}^{\omega}_{\epsilon}(u) = \int_{\mathbb{R}^{d}} \left( \frac{\epsilon}{2} \varphi^{\omega}(\epsilon^{-1} x,Du(x))^{2} + \epsilon^{-1} W(u(x)) \right) \, dx. 
\end{equation*}
The reader can check that $\mathcal{F}^{\omega}_{\epsilon}$ is obtained from $\mathcal{F}^{\omega}$ by rescaling space linearly and renormalizing by $\epsilon^{d - 1}$.  In other words, $\mathcal{F}^{\omega}_{\epsilon}$ is the functional obtained when we set the characteristic length scale of the system to $\epsilon$, and the exponent $d -1$ reflects the fact that the energy scales like surface area.   

In addition to rescaling, it is convenient to localize the functionals $(\mathcal{F}^{\omega}_{\epsilon})_{\epsilon > 0}$ by associating to each open subset $A \subseteq \mathbb{R}^{d}$ its own energy:
\begin{equation*}
\mathcal{F}^{\omega}_{\epsilon}(u; A) = \int_{A} \left(\frac{\epsilon}{2} \varphi^{\omega}(\epsilon^{-1} x,Du(x))^{2} + \epsilon^{-1} W(u(x)) \right) \, dx.
\end{equation*}  
In the sequel, the case $\epsilon = 1$ is frequently of interest and we will abbreviate by writing $\mathcal{F}^{\omega}(\cdot ; A) = \mathcal{F}^{\omega}_{1}(\cdot; A)$.  

Though at the macroscopic scale the objects of interest are surfaces, it is customary to treat them as functions.  Note that if $F \subseteq \mathbb{R}^{d}$ is a Caccioppoli set, then the function $u$ satisfying $u(x) = 1$ if $x \in F$ and $u(x) = -1$, otherwise, uniquely determines $F$.  Conversely, any function $u \in \text{BV}_{\text{loc}}(\mathbb{R}^{d}; \{-1,1\})$ is associated to such a set, namely $F = \{u = 1\}$.  Accordingly, in what follows, we will treat $\mathscr{E}$ as a functional defined on BV functions rather than Caccioppoli sets.  Specifically, if $A$ is a bounded open set, we write
\begin{equation*}
\mathscr{E}(u; A) = \left\{ \begin{array}{r l}
		\int_{\partial^{*} \{u = 1\} \cap A} \tilde{\varphi}(\nu_{\{u = 1\}}(\xi)) \, \mathcal{H}^{d - 1}(d \xi), & u \in BV(A; \{-1,1\}) \\
		\infty, & \text{otherwise}.
		\end{array} \right.
\end{equation*}

We are now prepared to state our main result.  In the statement of the next theorem, $\Sigma$ is the $\sigma$-algebra consisting of those events $A \in \mathscr{B}$ such that $\tau_{x}^{-1}(A) = A$ no matter the choice of $x \in \mathbb{R}^{d}$.  

\begin{theorem} \label{T: main result}  There is a one-homogeneous, convex function $\tilde{\varphi} : \mathbb{R}^{d} \to (0,\infty)$ depending only on $\mathbb{P}$ such that, with probability one, $\mathcal{F}^{\omega} \overset{\Gamma}\to \mathscr{E}$.  More specifically, there is an event $\hat{\Omega} \in \Sigma$ such that $\mathbb{P}(\hat{\Omega}) = 1$ and no matter the choice of Lipschitz, open, bounded $A \subseteq \mathbb{R}^{d}$ or $\omega \in \hat{\Omega}$, the following occurs:
\begin{itemize}
\item[(i)] If $(u_{\epsilon})_{\epsilon > 0} \subseteq H^{1}(A; [-1,1])$ satisfies
\begin{equation*}
\sup \left\{ \mathcal{F}^{\omega}_{\epsilon}(u_{\epsilon}; A) \, \mid \, \epsilon > 0\right\} < \infty,
\end{equation*}
then $(u_{\epsilon})_{\epsilon > 0}$ is relatively compact in $L^{1}(A)$ and all of its limit points are in $BV(A;\{-1,1\})$.  
\item[(ii)] If $u \in L^{1}(A;[-1,1])$ and $(u_{\epsilon})_{\epsilon > 0} \subseteq H^{1}(A;[-1,1])$ satisfies $u_{\epsilon} \to u$ in $L^{1}(A)$, then 
\begin{equation*}
\mathscr{E}(u; A) \leq \liminf_{\epsilon \to 0^{+}} \mathcal{F}^{\omega}_{\epsilon}(u_{\epsilon}; A).
\end{equation*}
\item[(iii)] If $u \in L^{1}(A;[-1,1])$, then there is a family $(u_{\epsilon})_{\epsilon > 0} \subseteq H^{1}(A; [-1,1])$ such that $u_{\epsilon} \to u$ in $L^{1}(A)$ and
\begin{equation*}
\limsup_{\epsilon \to 0^{+}} \mathcal{F}^{\omega}_{\epsilon}(u_{\epsilon};A) \leq \mathscr{E}(u; A).
\end{equation*}
\end{itemize}  \end{theorem}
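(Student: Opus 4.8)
The plan is to follow the Dal Maso–Modica variational framework as adapted to the SBV/interface setting by Ansini–Braides–Chiadò Piat, combining a subadditive ergodic argument for the macroscopic surface tension with the compactness and integral-representation machinery already available in the periodic case. First I would establish compactness, item (i): given $(u_\epsilon)$ with $\sup_\epsilon \mathcal{F}^\omega_\epsilon(u_\epsilon;A) < \infty$, the lower bound $\varphi^\omega(x,\cdot)^2 \geq \lambda |\cdot|^2$ reduces the energy to the standard (isotropic) Modica–Mortola functional up to the constant $\lambda$; hence the classical equicoercivity estimate — writing $w_\epsilon = \Psi\circ u_\epsilon$ for a primitive $\Psi$ of $\sqrt{2W}$ and using $\int_A |Dw_\epsilon| \leq \mathcal{F}^\omega_\epsilon(u_\epsilon;A)/\sqrt{\lambda}$ together with the $L^\infty$ bound $|u_\epsilon|\le 1$ — gives $L^1$ relative compactness of $(w_\epsilon)$, and since $\Psi$ is a homeomorphism of $[-1,1]$ onto its image, relative compactness of $(u_\epsilon)$; any limit point $u$ must satisfy $W(u)=0$ a.e., i.e.\ $u \in BV(A;\{-1,1\})$.

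The heart of the matter is constructing $\tilde\varphi$ and proving the $\Gamma$-$\liminf$ and $\Gamma$-$\limsup$ inequalities, (ii) and (iii). Following Alberti–Bellettini as reinterpreted through Presutti, I would define, for each unit vector $e$ and each $T>0$, the minimal energy $m^\omega_T(e)$ of the functional $\mathcal{F}^\omega$ (at scale $\epsilon=1$) over a cylinder of cross-sectional radius $T$ aligned with $e$, subject to boundary conditions forcing a transition from $-1$ to $+1$ across the hyperplane $e^\perp$. The key structural observation — the ``natural subadditive quantity'' advertised in the abstract — is that, because one can glue two competitors on adjacent cylinders along their common face with only a boundary-layer error that is negligible relative to the cross-sectional area, the quantities $m^\omega_T(e)$ are essentially superadditive in $T$ (equivalently, a suitable normalization is subadditive), so Akcoglu–Krengel's subadditive ergodic theorem applies along the $(d{-}1)$-dimensional index set. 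This yields, almost surely and for every rational $e$, a limit
\[
\tilde\varphi(e) = \lim_{T\to\infty} \frac{m^\omega_T(e)}{\omega_{d-1} T^{d-1}},
\]
deterministic by ergodicity; the delicate point, exactly as in \cite{magnetic domains} and \cite{stochastic homogenization free discontinuity}, is that the quantity scales like area, so one must set up the cylinder geometry and the gluing/cutoff estimates carefully enough that the error terms are genuinely $o(T^{d-1})$ uniformly. One then checks $\sqrt\lambda\,|e| \le \tilde\varphi(e)\le C\sqrt\Lambda\,|e|$, extends $\tilde\varphi$ one-homogeneously, and verifies convexity — the latter either from a direct geometric argument comparing planar interfaces of different orientations, or by invoking the integral-representation theorem of \cite{periodic_paper}, which forces any $\Gamma$-limit of this form to have a convex (BV-elliptic) surface integrand.

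With $\tilde\varphi$ in hand, the $\Gamma$-$\liminf$ inequality (ii) follows by the blow-up method of Fonseca–Müller: localize the liminf of $\mathcal{F}^\omega_\epsilon(u_\epsilon;\cdot)$ as a measure, and at $\mathcal{H}^{d-1}$-a.e.\ point $\xi$ of the reduced boundary $\partial^*\{u=1\}$, rescale so that the interface flattens to the hyperplane with normal $\nu=\nu_{\{u=1\}}(\xi)$; the local energy is then bounded below by the cylinder quantity defining $\tilde\varphi(\nu)$, giving $\tfrac{d\mu}{d\mathcal{H}^{d-1}\llcorner\partial^*\{u=1\}}(\xi)\ge\tilde\varphi(\nu)$ and hence (ii) upon integration. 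For the $\Gamma$-$\limsup$ (iii), by density and the standard diagonalization it suffices to treat $u=\chi_F\cdot 2-1$ with $F$ polyhedral; near each face of $\partial F$ one pastes in a near-optimal cylinder competitor realizing $\tilde\varphi(\nu)$ (rescaled by $\epsilon$), interpolates near the edges at negligible cost, and extends by the constants $\pm1$ away from the interface — the recovery sequence then has energy converging to $\mathscr{E}(u;A)$. The main obstacle throughout is the area-scaling issue in the subadditive argument: ensuring the cylinder functionals are truly (sub/super)additive up to $o(T^{d-1})$ errors, and that the boundary conditions on the cylinders are chosen so that gluing is possible and so that the resulting limit matches the value extracted in the blow-up — reconciling the ``cylinder'' definition of $\tilde\varphi$ with its role as the density of the $\Gamma$-limit is precisely where the Alberti–Bellettini/Presutti viewpoint does the work.
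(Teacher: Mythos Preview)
Your overall strategy is correct and closely matches the paper's: compactness via reduction to the isotropic Modica--Mortola functional, determination of $\tilde{\varphi}$ via a sub-additive cylinder quantity and the sub-additive ergodic theorem in the $(d{-}1)$-dimensional cross-sectional index, and then identification of the $\Gamma$-limit. Two points of comparison are worth making.

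First, the paper does not carry out the blow-up and polyhedral-recovery arguments by hand as you propose. Instead it invokes the compactness and integral-representation results of Ansini--Braides--Chiad\`{o} Piat (their Theorems~3.3 and~3.5): every sequence has a subsequence $\Gamma$-converging to a surface energy with density $\tilde{\psi}(x,e)$ recoverable by a derivation formula, and the paper then shows $\tilde{\psi}(x,e)=\tilde{\varphi}(e)$ by plugging the planar test functions into that formula and using the thermodynamic limit. Your direct blow-up/polyhedral route is equivalent in content (the cited machinery encapsulates exactly those arguments), but the paper's packaging is shorter. Convexity of $\tilde{\varphi}$ is likewise read off from the representation theorem rather than proved directly.

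Second, you underplay two technical points that occupy a substantial part of the paper. (a) The sub-additive ergodic theorem gives the limit for \emph{infinite} cylinders, $\tilde{\varphi}^{\omega}_{\infty}(e,Q(0,R))$, immediately; but the quantity that actually appears when matching to the $\Gamma$-limit is the minimum on bounded cubes, i.e.\ along the diagonal $h=R$. The paper proves these agree (its Theorem~2) by separating the limits: sub-additivity in the cross-section, almost-monotonicity in the height $h$, and then a conditional-expectation argument to commute the two limits. Your last sentence gestures at this, but it is the central technical difficulty, not an afterthought. (b) For either your blow-up or the paper's derivation-formula approach, one needs the limit for cubes centered at \emph{arbitrary} points $x_{0}$ and for \emph{all} directions $e$, not just cubes centered at the origin with rational $e$. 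The paper handles this separately (its Theorem~3 and Propositions~8--9) via Egoroff, the ergodic theorem, the fundamental estimate, and a continuity argument; your proposal does not address it, and without it the blow-up at a generic reduced-boundary point cannot be completed. (Minor: the cylinder quantity is \emph{sub}-additive in the cross-section---gluing competitors on sub-domains gives a competitor on the union---not superadditive.)
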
  

We emphasize that the event $\hat{\Omega}$ in the theorem does not depend on the domain $A$.  

Our choice to consider functions taking values in $[-1,1]$ is one of convenience.  It is possible to generalize Theorem \ref{T: main result} to functions taking values in $\mathbb{R}$ instead of $[-1,1]$ provided one imposes certain growth assumptions on $W$ (see \cite{periodic_paper}).  Since the transformation $u \mapsto (u \vee -1) \wedge 1$ can only decrease the energy, the general case is easily recovered from our work.  

Theorem \ref{T: main result} is proved by finding the surface tension $\tilde{\varphi}$ and then applying the compactness and integral representation results of \cite{periodic_paper}.  We will now sketch how $\tilde{\varphi}$ is found.  

Following \cite{periodic_paper}, we fix an absolutely continuous function $q : \mathbb{R} \to [-1,1]$ satisfying
\begin{equation} \label{E: BC}
\left\{ \begin{array}{r l}
			\int_{-\infty}^{\infty} \left(\frac{\Lambda}{2} q'(s)^{2} + W(q(s)) \right) \, ds < \infty \\
			\lim_{s \to \pm \infty} q(s) = \pm 1 
	\end{array} \right.
\end{equation}
For a given $e \in S^{d - 1}$ and $x \in \mathbb{R}^{d}$, we define $q_{e}, T_{x} q_{e} : \mathbb{R}^{d} \to [0,\infty)$ by $q_{e}(y) = q(\langle y, e \rangle)$ and $T_{x}q_{e}(y) = q_{e}(y - x)$.  

We use the function $T_{x}q_{e}$ to build nearly optimal phase configurations with asymptotically flat transition regions that are centered around the affine hyperplane $\{\langle y - x, e \rangle = 0\}$.  These functions are built by perturbing $T_{x}q_{e}$ in progressively larger domains.  We then obtain $\tilde{\varphi}(e)$ by suitably rescaling the energy of these configurations and letting the domain of the perturbation fill the whole space.  

To make this precise, we proceed by analogy with statistical mechanics.  To start with, we define a random process $\tilde{\Phi}^{\omega}$ that we call the finite-volume surface tension.  If $e \in S^{d -1}$, $x_{0} \in \mathbb{R}^{d}$, and $A \subseteq \mathbb{R}^{d}$ is a bounded open set, $\tilde{\Phi}^{\omega}(e,x_{0},A)$ is defined by the following formula:
\begin{equation*}
\tilde{\Phi}^{\omega}(e,x_{0},A) = \min \left\{ \mathcal{F}^{\omega}(u; A) \, \mid \, u \in H^{1}(A; [-1,1]), \, \, u - T_{x}q_{e} \in H^{1}_{0}(A) \right\}.
\end{equation*}
Physically, $\tilde{\Phi}^{\omega}(e,x_{0},A)$ is the optimal energy achievable when we perturb the planar configuration $T_{x}q_{e}$ in $A$.  We will show that the surface tension $\tilde{\varphi}$ can be obtained by studying $\tilde{\Phi}^{\omega}$.

It is convenient to begin by restricting to the case when $x_{0} = 0$.  First, a bit of notation.  Henceforth, fix a direction $e \in S^{d - 1}$ and let $O_{e} : \mathbb{R}^{d - 1} \to \mathbb{R}^{d}$ be a linear isometry onto the hyperplane $\langle e \rangle^{\perp}$ orthogonal to $e$.  Here and in the sequel, if $A \subseteq \mathbb{R}^{d - 1}$ and $B \subseteq \mathbb{R}$, then we define $A \oplus_{e} B \subseteq \mathbb{R}^{d}$ by 
\begin{equation*}
A \oplus_{e} B = \left\{ O_{e}(y) + te \, \mid \, y \in A, \, \, t \in B \right\}.
\end{equation*}
Additionally, we fix an orthonormal basis of $\mathbb{R}^{d - 1}$ and let $Q(0,R)$ denote the cube in $\mathbb{R}^{d - 1}$ centered at the origin, oriented according to this basis, and with side length $R > 0$.  

The centered finite-volume surface tension in the $e$ direction is the random process defined as follows: if $A \subseteq \mathbb{R}^{d - 1}$ is a bounded open set and $h > 0$, then we write
\begin{equation*}
\tilde{\varphi}^{\omega}(e,A,h) = \tilde{\Phi}^{\omega}(e,0,A \oplus_{e} (-h,h)).
\end{equation*}
Below we will demonstrate the utility of studying $\tilde{\Phi}^{\omega}$ as a function of the $e$ directions and the orthogonal directions separately.  For now, we remark that $\tilde{\varphi}^{\omega}(e,A,h)$ can be understood as a sub-additive process in $A$ and an almost monotone function in $h$.  

As a consequence of this monotonicity, it is possible to show that $\tilde{\varphi}^{\omega}(e,A,h)$ has a limit as $h \to \infty$.  In fact, if we let $\tilde{\varphi}^{\omega}_{\infty}(e,A) = \lim_{h \to \infty} \tilde{\varphi}^{\omega}(e,A,h)$, then the following formula holds:
\begin{equation} \label{E: infinite_horizon_intro}
 \tilde{\varphi}^{\omega}_{\infty}(e,A) = \min \left\{ \mathcal{F}^{\omega}(u; A \oplus_{e} \mathbb{R}) \, \mid \, -1 \leq u \leq 1, \, \, u - q = 0 \, \, \text{on} \, \, \partial A \oplus_{e} \mathbb{R} \right\}.
\end{equation}
Evidently, $\tilde{\varphi}^{\omega}_{\infty}(e,A)$ inherits the sub-additivity property in the limit $h \to \infty$, although it also follows by inspection of the right-hand side of \eqref{E: infinite_horizon_intro}.  Notice that here we are studying the optimal energy in a cylinder as opposed to a cube.

The surface tension can be obtained from $\tilde{\Phi}^{\omega}$ in a procedure we refer to as the thermodynamic limit.  In this context, this means both letting the set $A$ fill up $\mathbb{R}^{d- 1}$ and sending $h \to \infty$.  The result is summarized in the next theorem.  In the statement, $\Sigma_{e}$ is the $\sigma$-algebra of events $A \in \mathscr{B}$ such that $\tau_{x}^{-1}(A) = A$ if $x \in \langle e \rangle^{\perp}$.  

\begin{theorem} \label{T: thermodynamic_limit} For each $e \in S^{d - 1}$, there is an event $\tilde{\Omega}_{e} \in \Sigma_{e}$ satisfying $\mathbb{P}(\tilde{\Omega}_{e}) = 1$ such that if $\omega \in \tilde{\Omega}_{e}$ and $\kappa > 0$, then
\begin{align*}
\tilde{\varphi}(e) &= \lim_{R \to \infty} R^{1 - d} \tilde{\varphi}^{\omega}_{\infty}(e,Q(0,R)) \\
			&= \lim_{h \to \infty} \limsup_{R \to \infty} R^{1 - d} \tilde{\varphi}^{\omega}(e,Q(0,R),h) \\
			&= \lim_{h \to \infty} \liminf_{R \to \infty} R^{1 - d} \tilde{\varphi}^{\omega}(e,Q(0,R),h) \\
			&= \lim_{R \to \infty} R^{1 - d} \tilde{\varphi}^{\omega}(e,Q(0,R), \kappa R).
\end{align*}
\end{theorem}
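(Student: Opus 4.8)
The plan is to establish the four identities by combining the sub-additive ergodic theorem with the monotonicity in $h$, and then transferring the limit from cubes to general sequences of open sets. First I would fix $e$ and treat $\tilde{\varphi}^{\omega}_{\infty}(e,\cdot)$ as a sub-additive set function on bounded open subsets of $\mathbb{R}^{d-1}$: one checks from \eqref{E: infinite_horizon_intro} that if $A, B \subseteq \mathbb{R}^{d-1}$ are disjoint open sets with $|\partial A| = |\partial B| = 0$, then gluing a competitor for $A$ and a competitor for $B$ (both equal to $q$ on the lateral boundaries, so the glued function lies in the right admissible class for $A \cup B$) gives $\tilde{\varphi}^{\omega}_{\infty}(e, A \cup B) \leq \tilde{\varphi}^{\omega}_{\infty}(e,A) + \tilde{\varphi}^{\omega}_{\infty}(e,B)$. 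Together with the uniform bounds $\lambda$-from-below and $\Lambda$-from-above on $\varphi^{\omega}$ — which give $c R^{d-1} \leq \tilde{\varphi}^{\omega}_{\infty}(e,Q(0,R)) \leq C R^{d-1}$ using the fixed profile $q$ as an upper competitor and a slicing lower bound — and the stationarity $\varphi^{\tau_y\omega}(x,\cdot) = \varphi^{\omega}(x+y,\cdot)$ restricted to translations $y \in \langle e\rangle^{\perp}$, the process $\tilde{\varphi}^{\omega}_{\infty}(e,\cdot)$ is a stationary sub-additive process over the $(d-1)$-dimensional group action $\{\tau_{O_e(z)}\}_{z \in \mathbb{R}^{d-1}}$. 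The multiparameter sub-additive ergodic theorem (Akcoglu–Krengel) then yields an event $\tilde{\Omega}_e \in \Sigma_e$ of full probability and a constant — which we name $\tilde{\varphi}(e)$ — with $\lim_{R\to\infty} R^{1-d}\tilde{\varphi}^{\omega}_{\infty}(e,Q(0,R)) = \tilde{\varphi}(e)$ for $\omega \in \tilde{\Omega}_e$; ergodicity of $\tau$ forces the limit to be deterministic. This disposes of the first equality.

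Next I would handle the finite-$h$ quantities. The almost-monotonicity in $h$ asserted in the text means $\tilde{\varphi}^{\omega}(e,A,h) \leq \tilde{\varphi}^{\omega}(e,A,h') + (\text{error})$ for $h \leq h'$, with an error term that is $o(|A|)$ uniformly, and that $\tilde{\varphi}^{\omega}(e,A,h) \to \tilde{\varphi}^{\omega}_{\infty}(e,A)$ as $h\to\infty$; this is exactly formula \eqref{E: infinite_horizon_intro}. From this, $\limsup_{R\to\infty} R^{1-d}\tilde{\varphi}^{\omega}(e,Q(0,R),h)$ is, for each fixed $h$, squeezed between quantities converging to $\tilde{\varphi}(e)$ as $h \to \infty$: the lower direction is immediate because dropping the lateral constraint only lowers energy is false here, so instead one uses that a competitor in the slab $Q(0,R)\oplus_e(-h,h)$ extends by $q$ outside the slab to a competitor for $\tilde{\varphi}^{\omega}_{\infty}(e,Q(0,R))$ at the cost of the tail energy $\int_{|s|>h}(\tfrac{\Lambda}{2}q'^2 + W(q))\,ds$ times $|Q(0,R)|$, which after dividing by $R^{d-1}$ is a quantity tending to $0$ as $h\to\infty$ by \eqref{E: BC}; the upper direction uses restriction of the infinite-cylinder minimizer, truncated and reconnected to $q$ near $\{|s|=h\}$, again with tail cost. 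Combining, both the $\limsup$ and the $\liminf$ over $R$, after taking $h\to\infty$, equal $\tilde{\varphi}(e)$, giving the second and third equalities. A small subtlety: the minimizer in \eqref{E: infinite_horizon_intro} exists on an unbounded cylinder, so one should argue existence via the direct method using the coercivity from $\lambda$ and the decay forced by finiteness of the energy, or work with near-minimizers throughout.

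The fourth equality — the diagonal limit along $h = \kappa R$ — is the one I expect to be the main obstacle, because it requires the convergence in $h$ to be quantitative and uniform in $R$ in a compatible way. The idea is to sandwich: for any fixed $h_0$, once $R$ is large enough that $\kappa R \geq h_0$, almost-monotonicity gives $R^{1-d}\tilde{\varphi}^{\omega}(e,Q(0,R),\kappa R)$ bounded below by $R^{1-d}\tilde{\varphi}^{\omega}(e,Q(0,R),h_0)$ minus an error, whose $\limsup$ in $R$ is at least $\tilde{\varphi}(e) - o_{h_0}(1)$ by the third equality; letting $h_0 \to \infty$ gives the lower bound. For the upper bound, extend a near-minimizer on $Q(0,R)\oplus_e(-\kappa R,\kappa R)$ by $q$ to the full cylinder: this shows $R^{1-d}\tilde{\varphi}^{\omega}(e,Q(0,R),\kappa R) \geq R^{1-d}\tilde{\varphi}^{\omega}_\infty(e,Q(0,R)) - \int_{|s|>\kappa R}(\tfrac{\Lambda}{2}q'^2+W(q))\,ds$, and since $\kappa R \to \infty$ the tail vanishes, so the $\liminf$ is at least $\tilde\varphi(e)$ — wait, this is the wrong direction, so instead for the upper bound I use that $\tilde{\varphi}^{\omega}(e,Q(0,R),\kappa R) \leq \tilde{\varphi}^{\omega}(e,Q(0,R),h)$ plus error for $h \leq \kappa R$ is the wrong monotonicity; the correct statement is that enlarging $h$ does not increase energy modulo error, so $\tilde{\varphi}^{\omega}(e,Q(0,R),\kappa R)$ is $\leq$ a limit-in-$h$ quantity $= \tilde{\varphi}^{\omega}_\infty(e,Q(0,R))$ plus a vanishing error, and dividing by $R^{d-1}$ and taking $R\to\infty$ gives the upper bound $\tilde\varphi(e)$. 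The delicate point throughout is that all error terms must be controlled by $o(|Q(0,R)|) = o(R^{d-1})$ \emph{with constants independent of $R$}; this is where I would need the uniform ellipticity $(\lambda,\Lambda)$ and a careful bookkeeping of the reconnection (gluing) constructions, each of which costs at most a constant times $R^{d-2}\cdot(\text{thickness})$ along a codimension-one seam, hence is negligible after normalization.
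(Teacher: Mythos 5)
Your first equality (sub-additive ergodic theorem applied to $\tilde{\varphi}^{\omega}_{\infty}(e,\cdot)$) and the ``easy'' inequality $\lim_{R} R^{1-d}\tilde{\varphi}^{\omega}_{\infty}(e,Q(0,R)) \leq \lim_{h}\liminf_{R} R^{1-d}\tilde{\varphi}^{\omega}(e,Q(0,R),h)$ (extend a slab competitor by $q_{e}$, paying the tail $e(h)\mathcal{L}^{d-1}(Q(0,R))$, which is uniform in $R$ after normalization) are exactly the paper's argument. The genuine gap is in the complementary inequality $\lim_{h}\limsup_{R} R^{1-d}\tilde{\varphi}^{\omega}(e,Q(0,R),h) \leq \lim_{R} R^{1-d}\tilde{\varphi}^{\omega}_{\infty}(e,Q(0,R))$, which you dispose of with ``restriction of the infinite-cylinder minimizer, truncated and reconnected to $q$ near $\{|s|=h\}$, again with tail cost.'' That truncation (the proof of Proposition \ref{P: nice characterization}) yields $\tilde{\varphi}^{\omega}(e,Q(0,R),h) \leq \tilde{\varphi}^{\omega}_{\infty}(e,Q(0,R)) + \mathrm{err}(R,h)$ where $\mathrm{err}(R,h)$ involves the Dirichlet energy of the cylinder minimizer $u_{R}$ in the band $\{h < |\langle x,e\rangle| < h+1\}$ and the measure of the set where $u_{R}$ has not yet settled near $\pm 1$. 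For each \emph{fixed} $R$ this tends to $0$ as $h \to \infty$, but the statement requires $\limsup_{R} R^{1-d}\,\mathrm{err}(R,h) \to 0$ as $h \to \infty$, i.e.\ a rate uniform in $R$. Nothing forces the transition layer of $u_{R}$ to stay within a bounded distance of the hyperplane as $R \to \infty$, so this uniformity is exactly what is \emph{not} available; this interchange of $\limsup_{R}$ and $\lim_{h}$ is the central difficulty of the theorem, and your closing remark that the errors must be $o(R^{d-1})$ ``with constants independent of $R$'' identifies the issue without resolving it.

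The paper circumvents this with an ergodic-theoretic device you do not have (Lemma \ref{L: helpful_ergodic}): by sub-additivity, stationarity, and Lemma \ref{L: shifting and conditioning}, the normalized conditional expectations $2^{n(1-d)}\mathbb{E}(\tilde{\varphi}^{\omega}(e,Q(0,2^{n}),m)\mid \Sigma_{e})$ are \emph{genuinely non-increasing} in $n$ almost surely, so their limit in $n$ (which a.s.\ equals $\lim_{R}R^{1-d}\tilde{\varphi}^{\omega}(e,Q(0,R),m)$) is an infimum and is bounded above by every fixed term $2^{k(1-d)}\mathbb{E}(\tilde{\varphi}^{\omega}(e,Q(0,2^{k}),m)\mid\Sigma_{e})$. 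One may then send $m \to \infty$ at fixed $k$ using conditional dominated convergence and Proposition \ref{P: nice characterization} (where the non-uniformity in the domain is harmless, since the domain $Q(0,2^{k})$ is fixed), and only afterwards send $k \to \infty$. Some replacement for this step -- either the conditional-expectation argument or a quantitative flatness estimate for cylinder minimizers -- is needed before your proof of the second and third equalities (and hence the fourth, whose upper bound you correctly reduce to them after sorting out the direction of the almost-monotonicity) can be considered complete.
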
  

Theorem \ref{T: thermodynamic_limit} shows that the $e$ direction and the directions in $\langle e \rangle^{\perp}$ play distinct but complementary roles in the determination of the surface tension.  The limit $R \to \infty$ is handled using the sub-additive ergodic theorem.  This is intuitive: as $R$ grows, if we imagine that the developing transition region is roughly flat with normal vector $e$, then we are accommodating more and more surface area, and we expect that the fluctuations should average out.  On the other hand, the limit $h \to \infty$ does not require any rescaling, and, indeed, the arguments involved are completely deterministic in nature.  Essentially, the almost monotone dependence on $h$ reflects the fact that we are allowing more and more configurations as $h$ grows without losing any in the process.  Surprisingly, the boundary condition gives us enough control that all the limits in Theorem \ref{T: thermodynamic_limit} coincide.

We remark that the existence of the first limit in Theorem \ref{T: thermodynamic_limit} is an immediate consequence of the sub-additive ergodic theorem.  In this sense, it is very convenient to use infinite cylinders (see \eqref{E: infinite_horizon_intro}) instead of cubes.  On the other hand, the limit $R \to \infty$ along the line $h = R$ is the essential ingredient in the proof of $\Gamma$-convergence.  The existence of this limit is non-trivial, an issue that our problem shares in common with those considered in \cite{magnetic domains} and \cite{stochastic homogenization free discontinuity}.  Here is where we use the other two limits: once we prove the first three equalities in Theorem \ref{T: thermodynamic_limit}, the fourth follows easily.  

Theorem \ref{T: thermodynamic_limit} and its proof are the major difference between our work and the approach in \cite{magnetic domains} and \cite{stochastic homogenization free discontinuity}.  We comment on the relationship between their approach and ours in Remark \ref{R: response} below.  To sum it up, while our proof is longer, we feel it clarifies the role of the geometry of the problem, and it applies to arbitrary boundary conditions $q$ satisfying \eqref{E: BC}, not just flat ones.    

The statement and proof of Theorem \ref{T: thermodynamic_limit} is inspired by the analysis of the spatially homogeneous Lebowitz-Penrose functional in Chapter 7 of \cite{scaling limits}.  The novelty here is now the medium is heterogeneous and the approach needs to be supplemented by considerations from ergodic theory.

The proof we give below has in mind the case $d \geq 2$.  If $d = 1$, the same arguments work, but the dependence on $R$ above is superfluous.  Actually, in one dimension, the proof can be simplified considerably.  First of all, in that case, the surface tension can be computed in the following manner:
\begin{equation*}
\tilde{\varphi}(e) = \inf \left\{ \mathcal{F}^{\omega}(u; \mathbb{R}) \, \mid \, -1 \leq u \leq 1, \, \, \lim_{e x \to \pm \infty} u(x) = \pm 1 \right\} \quad \mathbb{P}\text{-almost surely}.
\end{equation*}
That the right-hand side should be constant follows from translational-invariance, which is not hard to check.  With this formula in hand, slight modifications of Alberti's proof of $\Gamma$-convergence in \cite{alberti guide} show that $\Gamma$-convergence holds.  Rather than expand on this here, we leave it as an exercise for the interested reader.  

Since  $R^{1 - d} \tilde{\varphi}^{\omega}(e,Q(0,R),R)$ converges as $R \to \infty$, Theorem \ref{T: thermodynamic_limit} effectively says that Theorem \ref{T: main result} holds provided $A$ is a cube centered in the hyperplane $\{\langle x, e \rangle = 0\}$ and $u = 2 \chi_{\{\langle x, e \rangle = 0\}} - 1$.  This is not immediate, but it is not a long proof either (see Proposition \ref{P: planar_convergence} below).  The extension to arbitrary cubes is established using the ergodic theorem and the fundamental estimate of $\Gamma$-convergence.  This is the content of our next theorem.

Before stating the theorem, we will need a bit of notation: if $\rho > 0$ and $x_{0} \in \mathbb{R}^{d}$ is decomposed as $x_{0} = O_{e}(\tilde{x}) + te$, then $Q^{e}(x_{0},\rho)$ is defined by 
\begin{equation*}
Q^{e}(x_{0},\rho) = Q(\tilde{x},\rho) \oplus_{e} \left(t - \frac{\rho}{2}, t + \frac{\rho}{2}\right).
\end{equation*}

\begin{theorem} \label{T: other_cubes}  There is a translationally-invariant event $\hat{\Omega} \in \Sigma$ satisfying $\mathbb{P}(\hat{\Omega}) = 1$ such that if $x_{0} \in \mathbb{R}^{d}$ and $\rho > 0$, then
\begin{equation*}
\tilde{\varphi}(e) \rho^{d - 1}= \lim_{R \to \infty} R^{1 - d} \tilde{\Phi}^{\omega}(e,Rx_{0},RQ^{e}(x_{0},\rho)) \quad \text{if} \, \, \omega \in \hat{\Omega}.
\end{equation*}
\end{theorem}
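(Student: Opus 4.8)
The plan is to deduce the statement from Theorem \ref{T: thermodynamic_limit} by a two-step argument: first handle cubes centered on the hyperplane $\langle x, e\rangle = 0$, then use stationarity to move to arbitrary centers. The point of Theorem \ref{T: thermodynamic_limit} is that it already computes $\tilde{\varphi}(e)$ as $\lim_{R\to\infty}R^{1-d}\tilde{\varphi}^{\omega}(e,Q(0,R),\kappa R)$ for any $\kappa > 0$; unpacking the definitions, $\tilde{\varphi}^{\omega}(e,Q(0,R),\kappa R) = \tilde{\Phi}^{\omega}(e,0,Q(0,R)\oplus_{e}(-\kappa R,\kappa R))$. Taking $\kappa = 1/2$ and noting that $RQ^{e}(0,\rho) = Q(0,R\rho)\oplus_{e}(-R\rho/2, R\rho/2)$ (since $x_0$ is then the origin), we get $\tilde{\Phi}^{\omega}(e,0,RQ^{e}(0,\rho)) = \tilde{\varphi}^{\omega}(e, Q(0,R\rho), R\rho/2)$, and replacing $R$ by $R\rho$ in Theorem \ref{T: thermodynamic_limit} yields $\lim_{R\to\infty}(R\rho)^{1-d}\tilde{\Phi}^{\omega}(e,0,RQ^{e}(0,\rho)) = \tilde{\varphi}(e)$, i.e.\ $\lim_{R\to\infty}R^{1-d}\tilde{\Phi}^{\omega}(e,0,RQ^{e}(0,\rho)) = \tilde{\varphi}(e)\rho^{d-1}$. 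This settles the case $x_0 \in \langle e\rangle^{\perp}$ (in particular $x_0 = 0$) on the event $\tilde{\Omega}_{e}$, and I would intersect $\tilde{\Omega}_{e}$ over a countable dense set of $\rho$'s and directions as needed, using almost monotonicity in $h$ and sub-additivity in $A$ to pass to all $\rho$.

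Next I would reduce an arbitrary center $x_0 = O_{e}(\tilde{x}) + te$ to the hyperplane case. The first observation is that $\tilde{\Phi}^{\omega}$ is covariant under translation in the $\langle e\rangle^{\perp}$ directions: shifting both the boundary datum $T_{x}q_e$ and the domain by $O_{e}(\tilde x)$ and using $\varphi^{\tau_y\omega}(x,\cdot) = \varphi^{\omega}(x+y,\cdot)$ gives $\tilde{\Phi}^{\omega}(e, R x_0, RQ^{e}(x_0,\rho)) = \tilde{\Phi}^{\tau_{-RO_e(\tilde x)}\omega}(e, Rte, RQ^{e}(te,\rho))$. The second observation is that translation in the $e$ direction is harmless up to changing the boundary profile: $T_{Rte}q_e$ differs from $q_e$ only by the shift $s \mapsto s - R t$ in the profile $q$, and since the defining minimization for $\tilde{\Phi}$ allows any profile satisfying \eqref{E: BC}, the translated profile is again admissible. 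So the $e$-direction shift only changes which profile we use, and Theorem \ref{T: thermodynamic_limit} was proved for \emph{arbitrary} $q$ satisfying \eqref{E: BC} (this is exactly the flexibility the excerpt emphasizes). Thus $R^{1-d}\tilde{\Phi}^{\omega}(e, Rx_0, RQ^{e}(x_0,\rho))$ equals $R^{1-d}$ times the centered finite-volume surface tension for a shifted environment and a shifted profile, both of which converge to $\tilde{\varphi}(e)\rho^{d-1}$.

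The main obstacle is that the environment shift $\tau_{-RO_e(\tilde x)}\omega$ moves with $R$, so I cannot simply quote the full-measure event from Theorem \ref{T: thermodynamic_limit} for a single $\omega$; I need convergence that is uniform enough in the environment. The clean way to handle this is the ergodic-theorem route hinted at in the excerpt: rather than track $\tilde{\Phi}^{\omega}(e,Rx_0,\cdot)$ directly, I would first establish the conclusion for $x_0 = 0$ on the translationally-invariant event $\hat{\Omega} \in \Sigma$ (Theorem \ref{T: thermodynamic_limit} lives on $\tilde{\Omega}_e \in \Sigma_e$, and I would enlarge/shrink to an event in $\Sigma$ using the fundamental estimate of $\Gamma$-convergence, as the excerpt indicates), and then exploit the fact that $\hat{\Omega}$ being in $\Sigma$ means $\tau_{y}\omega \in \hat{\Omega}$ for all $y$ and all $\omega \in \hat{\Omega}$, so the moving shift stays inside the good event. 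Concretely: fix $\omega \in \hat{\Omega}$; for the $\langle e\rangle^{\perp}$-shift I use covariance to rewrite the quantity at $\tau_{-RO_e(\tilde x)}\omega \in \hat{\Omega}$, but the rate of convergence may still depend on the shifted environment. To close this uniformly, I would use that the limit $\tilde{\varphi}(e)\rho^{d-1}$ is deterministic together with upper and lower bounds on $\tilde{\Phi}$ that are uniform in the environment (these come from the uniform ellipticity bounds $\lambda \le \varphi^{\omega}(x,e)^2 \le \Lambda$ and a standard comparison-configuration argument: gluing the profile $q$ near $\partial(RQ^e)$ to a recovery sequence built from the $x_0 = 0$ case, controlled by the fundamental estimate). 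So the real work is (a) promoting Theorem \ref{T: thermodynamic_limit}'s event from $\Sigma_e$ to $\Sigma$ via the fundamental estimate, and (b) the gluing/comparison argument that transfers the $x_0 = 0$ asymptotics to a cube centered at $Rx_0$ with an $O(R^{d-2})$ error in the energy — which after dividing by $R^{d-1}$ vanishes. Once those are in place, the four equalities of Theorem \ref{T: thermodynamic_limit} give exactly the limit claimed, and the event $\hat{\Omega}$ is manifestly independent of $x_0$ and $\rho$.
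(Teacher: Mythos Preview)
Your proposal has a genuine gap. First, your treatment of the $e$-direction shift is incorrect: translating by $Rte$ does not merely change the profile while leaving the environment untouched. The domain $RQ^{e}(te,\rho)$ is centered at height $Rt$ in the $e$-direction, and the correct change of variables gives $\tilde{\Phi}^{\omega}(e, Rte, RQ^{e}(te,\rho)) = \tilde{\Phi}^{\tau_{Rte}\omega}(e, 0, RQ^{e}(0,\rho))$. So the $e$-shift produces the \emph{same} diagonal problem as the $\langle e\rangle^{\perp}$-shift --- an environment translation that grows with $R$ --- and the freedom in the choice of $q$ buys you nothing here. Second, and more importantly, your proposed cure for the moving shift --- uniform ellipticity bounds plus a gluing with $O(R^{d-2})$ error --- does not close the argument. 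Uniform ellipticity yields only the trivial sandwich $\sqrt{\lambda}\,c_{W}\le\cdot\le\sqrt{\Lambda}\,c_{W}$, not the value $\tilde\varphi(e)$; and there is no $O(R^{d-2})$ comparison between $\tilde\Phi^{\omega}$ on a cube centered at $0$ and one centered at $Rx_{0}$, since for large $R$ these sit in disjoint regions of the random medium with no sub-additive relation between them. Knowing that $\tau_{-Rx_{0}}\omega\in\hat\Omega$ for every $R$ gives you only pointwise convergence for each fixed shift, which is exactly what fails along the diagonal.

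The paper supplies the missing mechanism. By Egoroff's theorem one first extracts a set $\Omega_{x_{0},\rho}\in\mathscr{B}$ with $\mathbb{P}(\Omega_{x_{0},\rho})>\tfrac12$ on which the convergence $N^{1-d}\tilde\Phi^{\omega}(e,0,Q^{e}(0,N\rho))\to\tilde\varphi(e)\rho^{d-1}$ is \emph{uniform in} $\omega$. Birkhoff's ergodic theorem (applied to the indicator of $\Omega_{x_{0},\rho}$) then produces a full-measure event in $\Sigma$ on which, for every large $R$, there is a point $y_{R}\in RQ^{e}(x_{0},\delta\rho)$ with $\tau_{y_{R}}\omega\in\Omega_{x_{0},\rho}$. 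The fundamental estimate now glues a minimizer centered at $y_{R}$ --- for which one has the uniform rate --- into the cube $RQ^{e}(x_{0},\rho)$, with an error controlled by $\delta$ and the annulus; sending $\delta\to0$ gives the upper bound, and the lower bound is obtained symmetrically by approximating from outside. One then passes from rational $(x_{0},\rho)$ to all $(x_{0},\rho)$, and from rational directions to all $e$, by further approximation arguments. This Egoroff-plus-ergodic-theorem device is precisely what upgrades pointwise almost-sure convergence to a statement that survives the $R$-dependent shift, and it is the idea your outline is missing.
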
  

Once Theorem \ref{T: other_cubes} is proved, $\Gamma$-convergence follows from the machinery developed in \cite{periodic_paper}.  Effectively, once we establish $\Gamma$-convergence of the energy of planar interfaces, the general case can be handled using purely deterministic arguments.

\subsection{Organization of the paper}  The analysis of the finite-volume surface tension is carried out in Section \ref{S: thermodynamic limit}.  We define the surface tension $\tilde{\varphi}$ and study the thermodynamic limit in Section \ref{S: surface tension}.  In the process, we also prove Theorem \ref{T: other_cubes} and continuity of $\tilde{\varphi}$.  The paper concludes in Section \ref{S: geometry}, where we prove Theorem \ref{T: main result}.  

Appendix \ref{A: fundamental_estimate} is dedicated to the statement and proof of the fundamental estimate of $\Gamma$-convergence as it pertains to \eqref{E: energy}.  A number of relevant results from ergodic theory are recalled in Appendix \ref{A: ergodic theory}. 

The notation is catalogued in Section \ref{S: assumptions/notations}.   

 \section{Notations} \label{S: assumptions/notations}
 
 \subsection{Euclidean Geometry} \label{S: euclidean} We let $\langle \cdot, \cdot \rangle : \mathbb{R}^{d} \times \mathbb{R}^{d} \to \mathbb{R}$ denote the standard Euclidean inner product in $\mathbb{R}^{d}$.  We use the notation $\|\cdot\|$ for the norm induced by $\langle \cdot, \cdot \rangle$.  
 
Given an $A \subseteq \mathbb{R}^{d}$, $A^{\perp}$ denotes the subspace of vectors in $\mathbb{R}^{d}$ orthogonal to every vector in $A$.  

If $v \in \mathbb{R}^{d}$, we let $\langle v \rangle$ denote its span in $\mathbb{R}^{d}$.  If $A \subseteq \mathbb{R}^{d}$ and $x \in \mathbb{R}^{d}$, then $x + A$ is the set obtained by translating $A$ by $x$, that is, 
\begin{equation*}
x + A = \{y + x \, \mid \, y \in A\}.
\end{equation*}  

The unit sphere in $\mathbb{R}^{d-1}$ is denoted by $S^{d - 1} = \{e \in \mathbb{R}^{d} \, \mid \, \|e\| = 1\}$.

We assume at the outset that we have fixed a family $\{O_{e}\}_{e \in S^{d - 1}}$ of linear maps $O_{e} : \mathbb{R}^{d - 1} \to \mathbb{R}^{d}$ preserving $\langle \cdot, \cdot \rangle$ so that $O_{e}(\mathbb{R}^{d - 1}) = \langle e \rangle^{\perp}$.  This is only a matter of notational convenience as our results are independent of this choice.

%Throughout we fix an orthonormal basis $\{e_{1},\dots,e_{d - 1}\}$ of $\mathbb{R}^{d - 1}$.  We let $|\cdot|_{\infty} : \mathbb{R}^{d - 1} \to [0,\infty)$ denote the norm given by 
%\begin{equation*}
%|y|_{\infty} = \max\{|\langle y, e_{1} \rangle|, \dots, |\langle y, e_{d - 1} \rangle|\}.
%\end{equation*}  
%For each $r > 0$ and $y \in \mathbb{R}^{d - 1}$, we let $Q(y,r)$ be the open ball centered at $y$ with $|\cdot|_{\infty}$-radius $\frac{r}{2}$, that is,
%\begin{equation*}
%Q(y,r) = \left\{x \in \mathbb{R}^{d - 1} \, \mid \, |x - y|_{\infty} < \frac{r}{2}\right\}
%\end{equation*}  
%Note the factor of two, which simplifies many of the computations below.  

When $A \subseteq \mathbb{R}^{d - 1}$ and $B \subseteq \mathbb{R}$, we denote their direct sum along $e$ as $A \oplus_{e} B$, which is defined formally by
\begin{equation*}
A \oplus_{e} B = \left\{ O_{e}(a) + be \, \mid \, a \in A, \, \, b \in B \right\}. 
\end{equation*}
Similarly, if $x \in \mathbb{R}^{d - 1}$ and $s \in \mathbb{R}$, we define $x \oplus_{e} s = O_{e}(x) + se$.  

For each $k \in \{1,2,\dots,d\}$, we let $\mathcal{U}_{0}^{k}$ denote the collection of bounded open subsets of $\mathbb{R}^{k}$.

\subsection{Cubes} \label{S: cubes}

%In general, we will call an open set $Q \subseteq \mathbb{R}^{d - 1}$ an \emph{open cube} if there is an orthogonal basis $\{y_{1},\dots,y_{d - 1}\}$, an $r > 0$, and an $x \in \mathbb{R}^{d}$ such that
%\begin{equation*}
%Q = x + \left\{y \in \mathbb{R}^{d - 1} \, \mid \, \max\{ |\langle y, y_{1}\rangle|, \dots, |\langle y, y_{d - 1} \rangle|\} < r \right\}.
%\end{equation*}
%
%For a given $e \in S^{d - 1}$, we will say that $T \subseteq \mathbb{R}^{d}$ is an \emph{open cube in the direction of $e$} if $T = Q \oplus_{e} I$, where $Q$ is an open cube in $\mathbb{R}^{d - 1}$ and $I$ is a bounded open interval in $\mathbb{R}$.  Given such a cube $T$, we will say that it has rational endpoints if each of its extreme points is an element of $\mathbb{Q}^{d}$.  

Throughout we fix an orthonormal basis $\{e_{1},\dots,e_{d - 1}\}$ of $\mathbb{R}^{d - 1}$.  We let $|\cdot|_{\infty} : \mathbb{R}^{d - 1} \to [0,\infty)$ denote the norm given by 
\begin{equation*}
|y|_{\infty} = \max\{|\langle y, e_{1} \rangle|, \dots, |\langle y, e_{d - 1} \rangle|\}.
\end{equation*}  
For each $r > 0$ and $y \in \mathbb{R}^{d - 1}$, $Q(y,r)$ is the open ball centered at $y$ with $|\cdot|_{\infty}$-radius $\frac{r}{2}$, that is,
\begin{equation*}
Q(y,r) = \left\{x \in \mathbb{R}^{d - 1} \, \mid \, |x - y|_{\infty} < \frac{r}{2} \right\}
\end{equation*}  
Note the factor of two, which simplifies many of the computations below.  

If $e \in S^{d - 1}$, $x \in \mathbb{R}^{d}$, and $r > 0$, then we denote by $Q^{e}(0,r)$ the open subset of $\mathbb{R}^{d}$ given by 
\begin{equation*}
Q^{e}(x,r) = x + Q(0,r) \oplus_{e} \left(-\frac{r}{2}, \frac{r}{2}\right).
\end{equation*}

%In general, we will call an open set $Q \subseteq \mathbb{R}^{d - 1}$ an \emph{open cube} if there is a $y \in \mathbb{R}^{d - 1}$ and an $r > 0$ such that $Q = Q(y,r)$.  
%
%For a given $e \in S^{d - 1}$, we will say that $T \subseteq \mathbb{R}^{d}$ is an \emph{open cube in the direction of $e$} if $T = Q^{e}(x,r)$ for some $x \in \mathbb{R}^{d}$ and $r > 0$.  Given such a cube $T$, we will say that it has rational endpoints if each of its extreme points is an element of $\mathbb{Q}^{d}$.  

\subsection{Geometric Measure Theory}  If $k \in \{1,2,\dots,d\}$, we let $\mathcal{L}^{k}$ be the Lebesgue measure on $\mathbb{R}^{k}$.  In $\mathbb{R}^{d}$, $\mathcal{H}^{k}$ is the $k$-dimensional Hausdorff measure normalized to coincide with $k$-dimensional area on smooth $k$-surfaces.  

If $E \subseteq \mathbb{R}^{d}$ is a Caccioppoli set, we denote by $\partial^{*}E$ its reduced boundary and $\nu_{E}$, its normal vector. 

\subsection{Functions}  If $f : \mathbb{R}^{d} \to \mathbb{R}$ is a function and $x \in \mathbb{R}^{d}$, the function $T_{x}f : \mathbb{R}^{d} \to \mathbb{R}$ is defined by
\begin{equation*}
(T_{x}f)(y) = f(y - x).
\end{equation*}

Given an $A \in \mathcal{U}_{0}^{d}$, we let $H^{1}(A)$ denote the closure of $C^{\infty}(A)$ with respect to the inner product $(\cdot,\cdot)_{H^{1}(A)}$ given by
\begin{equation*}
(f,g)_{H^{1}(A)} = \int_{A} \left(f(x) g(x) + \langle Df(x), Dg(x) \rangle \right) \, dx.
\end{equation*}
$H^{1}_{0}(A)$ denotes the closure of $C^{\infty}_{c}(A)$ in $H^{1}(A)$.  The subset $H^{1}(A;[-1,1])$ consists of those functions $u \in H^{1}(A)$ such that $-1 \leq u \leq 1$ $\mathcal{L}^{d}$-almost everywhere.

We define $\chi : \mathbb{R} \to \{-1,1\}$ by 
\begin{equation}
\chi(s) = \left\{ \begin{array}{r l}
				1, & s \geq 0 \\
				-1, & s < 0
			\end{array} \right.
\end{equation}

\subsection{Invariant $\sigma$-algebras}  We will frequently be interested in random variables that are invariant under certain subgroups of $(\tau_{x})_{x \in \mathbb{R}^{d}}$.  Given an $e \in S^{d -1}$, $\Sigma_{e}$ is the sub-$\sigma$-algebra of $\mathscr{B}$ generated by sets $A$ such that 
\begin{equation} \label{E: invariant_sets}
\tau_{x}^{-1}(A) = A \quad \text{if} \, \, x \in \langle e \rangle^{\perp}.
\end{equation}
We let $\Sigma = \bigcap_{e \in S^{d -1}} \Sigma_{e}$, i.e.\ $\Sigma$ is the family of sets for which \eqref{E: invariant_sets} holds no matter the choice of $x \in \mathbb{R}^{d}$.   

\subsection{Asymptotic Notation}  Given two functions $r, a : (0,\infty) \to \mathbb{R}$, we write $r(s) = o(a(s))$ if $\lim_{s \to 0^{+}} \frac{r(s)}{a(s)} = 0$.  In particular, the notation $r(s) = o(1)$ means $\lim_{s \to 0^{+}} r(s) = 0$.  We will often abbreviate a situation such as $f(s) - g(s) = o(a(s))$ as $f(s) = g(s) + o(a(s))$.    

\subsection{Boundary Conditions}  Given a function $q : \mathbb{R} \to \mathbb{R}$ and an $e \in S^{d - 1}$, we define $q_{e} : \mathbb{R}^{d} \to \mathbb{R}$ by 
\begin{equation*}
q_{e}(x) = q\left(\langle x, e \rangle\right).
\end{equation*}
Additionally, if $\epsilon > 0$, $q^{\epsilon}_{e} : \mathbb{R}^{d} \to \mathbb{R}$ is the function defined by 
\begin{equation*}
q^{\epsilon}_{e}(x) = q_{e}(\epsilon^{-1} x) = q \left(\frac{\langle x, e \rangle}{\epsilon} \right).
\end{equation*}

\section{Finite-Volume Surface Tension} \label{S: thermodynamic limit}

First, as in the introduction, we fix a smooth $q : \mathbb{R} \to [-1,1]$ satisfying \eqref{E: BC}.
As we explained already, $q$ serves the role of a boundary condition.  We will see the exact choice of $q$ is immaterial.

\subsection{Finite-volume surface tension}  We begin by introducing the finite-volume surface tension and its basic properties.  
\begin{definition}  The finite-volume surface tension is the random function $\tilde{\Phi}^{\omega} : S^{d - 1} \times \mathbb{R}^{d} \times \mathcal{U}_{0}^{d} \to (0,\infty)$ given by 
\begin{equation*}
\tilde{\Phi}^{\omega}(e,x_{0},A) = \inf \left\{ \mathcal{F}^{\omega}(u; A) \, \mid \, u \in H^{1}(A; [ -1,1]), \, \, u - T_{x_{0}}q_{e} \in H^{1}_{0}(A) \right\}.
\end{equation*}
\end{definition}

Note that the infimum in the definition is, in fact, achieved, as a consequence of the lower semi-continuity of $\mathcal{F}^{\omega}(\cdot ; A)$.  Moreover, since $H^{1}_{0}(A)$ is separable, we readily deduce that $\tilde{\Phi}^{\omega}(e,x_{0},A)$ is $\mathscr{B}$-measurable independently of the choice of $e$, $x_{0}$, and $A$.

Before proceeding further, we record a useful observation.  Here and henceforth, we define the constant $C_{\Lambda} > 0$ by
\begin{equation*}
C_{\Lambda} =  \int_{-\infty}^{\infty} \left(\frac{\Lambda q'(s)^{2}}{2} + W(q(s)) \right) \, ds.
\end{equation*}

\begin{prop} \label{P: basic_upper_bound}  If $e \in S^{d - 1}$, $x_{0} \in \mathbb{R}^{d}$, $A \in \mathcal{U}^{d - 1}_{0}$, and $I \in \mathcal{U}_{0}^{1}$, then
\begin{equation*}
0 \leq \tilde{\Phi}^{\omega}(e,x_{0},A \oplus_{e} I) \leq C_{\Lambda} \mathcal{L}^{d - 1}(A).
\end{equation*}\end{prop}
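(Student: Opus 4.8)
The plan is to use the unperturbed planar profile $T_{x_{0}}q_{e}$ itself as a competitor in the infimum defining $\tilde{\Phi}^{\omega}(e,x_{0},A\oplus_{e}I)$. The lower bound $0\le\tilde{\Phi}^{\omega}(e,x_{0},A\oplus_{e}I)$ is immediate, since the integrand defining $\mathcal{F}^{\omega}$ is nonnegative. For the upper bound, I would first check that $T_{x_{0}}q_{e}$, restricted to the bounded open set $A\oplus_{e}I$, lies in $H^{1}(A\oplus_{e}I;[-1,1])$: indeed $q$ is smooth with values in $[-1,1]$, so $T_{x_{0}}q_{e}$ and its gradient $D(T_{x_{0}}q_{e})(x)=q'(\langle x,e\rangle-t_{0})e$ (where $t_{0}=\langle x_{0},e\rangle$) are bounded on the bounded set $A\oplus_{e}I$. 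Since $u-T_{x_{0}}q_{e}=0\in H^{1}_{0}(A\oplus_{e}I)$ trivially, $T_{x_{0}}q_{e}$ is an admissible competitor, whence $\tilde{\Phi}^{\omega}(e,x_{0},A\oplus_{e}I)\le\mathcal{F}^{\omega}(T_{x_{0}}q_{e};A\oplus_{e}I)$.

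Next I would estimate the energy of this competitor. By the one-homogeneity of $\varphi^{\omega}(x,\cdot)$ together with the bound $\Phi^{\tau_{x}\omega}\le\sqrt{\Lambda}$ from part (i) of the definition of $\mathscr{F}_{d}(\lambda,\Lambda)$, one has
\[
\varphi^{\omega}\bigl(x,D(T_{x_{0}}q_{e})(x)\bigr)^{2}=q'(\langle x,e\rangle-t_{0})^{2}\,\Phi^{\tau_{x}\omega}\!\left(\tfrac{D(T_{x_{0}}q_{e})(x)}{\|D(T_{x_{0}}q_{e})(x)\|}\right)^{2}\le\Lambda\,q'(\langle x,e\rangle-t_{0})^{2}
\]
(the middle expression being $0$ where $q'=0$, consistent with the right-hand side). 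Therefore
\[
\mathcal{F}^{\omega}(T_{x_{0}}q_{e};A\oplus_{e}I)\le\int_{A\oplus_{e}I}\left(\frac{\Lambda}{2}q'(\langle x,e\rangle-t_{0})^{2}+W\bigl(q(\langle x,e\rangle-t_{0})\bigr)\right)dx.
\]

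Finally I would evaluate the right-hand side by Fubini's theorem relative to the decomposition $x=O_{e}(y)+te$ with $y\in A$, $t\in I$, under which $\langle x,e\rangle=t$. Since the integrand depends only on $t-t_{0}$, the integral factors as $\mathcal{L}^{d-1}(A)\int_{I}\bigl(\tfrac{\Lambda}{2}q'(t-t_{0})^{2}+W(q(t-t_{0}))\bigr)\,dt$; enlarging the domain of the inner integral to all of $\mathbb{R}$ and translating by $t_{0}$ shows it is at most $C_{\Lambda}$, which is finite by \eqref{E: BC}. Chaining the three displays yields $\tilde{\Phi}^{\omega}(e,x_{0},A\oplus_{e}I)\le C_{\Lambda}\,\mathcal{L}^{d-1}(A)$, as desired. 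I do not anticipate any genuine obstacle here: the only points deserving a little care are the admissibility of $T_{x_{0}}q_{e}$ as an $H^{1}$-competitor on the bounded domain and the bookkeeping in the change of variables, namely that the "vertical" coordinate $\langle x,e\rangle$ sweeps out exactly $t_{0}+I$ as $x$ ranges over $A\oplus_{e}I$ after the translation by $x_{0}$.
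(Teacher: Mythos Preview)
Your proposal is correct and follows essentially the same approach as the paper: use $T_{x_{0}}q_{e}$ as a competitor, invoke the pointwise bound $\varphi^{\omega}(x,p)\le\sqrt{\Lambda}\|p\|$, and apply Fubini in the $\oplus_{e}$-decomposition to bound the energy by $C_{\Lambda}\mathcal{L}^{d-1}(A)$. Your write-up simply fills in details (admissibility of the competitor, explicit gradient computation) that the paper leaves implicit; the only slightly garbled remark is the final parenthetical about which interval $\langle x,e\rangle$ sweeps out, but your actual integral computation is correct.
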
  

\begin{proof}  Using $T_{x_{0}}q$ itself as a candidate, we obtain an upper bound:
\begin{equation*}
0 \leq \tilde{\Phi}^{\omega}(e,x_{0},A \oplus_{e} I) \leq \mathcal{F}^{\omega}(T_{x_{0}}q;A \oplus_{e} I).
\end{equation*}  
It only remains to estimate $\mathcal{F}^{\omega}_{\epsilon}(T_{x_{0}}q;A \oplus_{e} I)$.  Since $\varphi^{\omega}(x,p) \leq \sqrt{\Lambda} \|p\|$ and $q_{e}$ only varies in the $e$ direction, Fubini's Theorem readily implies
\begin{equation*}
\mathcal{F}^{\omega}(T_{x_{0}}q_{e};A \oplus_{e} I) \leq C_{\Lambda} \mathcal{L}^{d - 1}(A).\end{equation*} 
\end{proof}  

%Finally, we recall the following rudiment of ergodic theory for the reader's benefit:
%
%\begin{prop} \label{P: elementary_ergodic}  For each triple $(e,x_{0},A)$ and each $x \in \mathbb{R}^{d}$, $\Phi^{\omega}(e,x_{0},x + A) = \Phi^{\tau_{x}\omega}(e,x_{0},A)$.  \end{prop}
%
%This is not used anywhere???

%\begin{proof}  Since $\Phi^{a}(e,x_{0},A)$ is $\mathscr{B}$-measurable, it suffices to show that if $f$ is any $\mathscr{B}$-measurable random variable, then $f(a)$ and $f(\tau_{x}a)$ have the same law.  This follows from the fact that $\tau_{x}$ preserves $\mathbb{P}$.  
%
%Indeed, if $c \in \mathbb{R}$, then
%\begin{equation*}
%\{a \in \Omega \, \mid \, f(\tau_{x}a) \leq c\} = \tau_{x}^{-1}\{a \in \Omega \, \mid \, f(a) \leq c\}
%\end{equation*}
%and, thus,
%\begin{equation*}
%\mathbb{P}\{f(\tau_{x}a) \leq c\} = \mathbb{P}\{f(a) \leq c\}.
%\end{equation*}
%Since $c$ was arbitrary, we conclude $f(\tau_{x}a) \overset{\mathcal{D}}= f(a)$.    \end{proof}    

\subsection{The case of cubes centered at the origin}  As we mentioned in the introduction, to start with it is convenient to restrict $\tilde{\Phi}^{\omega}$ to $x_{0} = 0$.  The analysis of the resulting process, which we precisely define next, takes up most of the remainder of this section and the one that follows.  

\begin{definition}  The centered finite-volume surface tension is the random function $\tilde{\varphi}^{\omega} : S^{d - 1} \times \mathcal{U}^{d - 1}_{0} \times (0,\infty) \to (0,\infty)$ given by 
\begin{equation*}
\tilde{\varphi}^{\omega}(e,A, h) = \tilde{\Phi}^{\omega}(e,0,A \oplus_{e} (-h,h)).
\end{equation*}
\end{definition}  

After chasing the definitions, the reader will readily verify that the effect of setting $x_{0} = 0$ is to restrict ourselves to the case when the interface $\{T_{x_{0}}q = 0\}$ equals the hyperplane $\langle e \rangle^{\perp}$.  

The next proposition gives the essential properties of the centered surface tension.  In particular, in the language of \cite{nonlinear stochastic homogenization}, $\tilde{\varphi}^{\omega}(e,\cdot,h)$ is a sub-additive process.

\begin{prop} \label{P: important_properties} For each fixed $e \in S^{d - 1}$ and $h > 0$, the function $A \mapsto \tilde{\varphi}^{\omega}(e,A, h)$ satisfies:
\begin{itemize}
\item[(i)] If $A,A_{1},\dots,A_{N} \in \mathcal{U}_{0}^{d - 1}$, $\{A_{1},\dots,A_{N}\}$ is pairwise disjoint, $\bigcup_{i = 1}^{N} A_{i} \subseteq A$, and $\mathcal{L}^{d - 1}(A \setminus \bigcup_{i = 1}^{N} A_{i}) = 0$, then 
\begin{equation} \label{E: subadditive_property}
\tilde{\varphi}^{\omega}(e,A,h) \leq \sum_{i = 1}^{N} \tilde{\varphi}^{\omega}(e,A_{i},h).
\end{equation}
\item[(ii)] $\tilde{\varphi}^{\omega}(e,A,h)$ is uniformly bounded in the following sense:
\begin{equation} \label{E: equibounded}
0 \leq \tilde{\varphi}^{\omega}(e,A,h) \leq C_{\Lambda} \mathcal{L}^{d - 1}(A) \quad \text{if} \, \, A \in \mathcal{U}_{0}^{d - 1}.
\end{equation}
\item[(iii)] For each $A \in \mathcal{U}_{0}^{d -1}$ and each $x \in \langle e \rangle^{\perp}$, the following equation holds: \begin{equation*}
\tilde{\varphi}^{\tau_{x}\omega}(e,A,h) = \tilde{\varphi}^{\omega}(e,A + O_{e}^{-1}(x),h).
\end{equation*}
\end{itemize}\end{prop}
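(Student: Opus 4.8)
The plan is to establish the three claims in increasing order of difficulty: the uniform bound (ii) first, then the covariance relation (iii), and finally the sub-additivity (i). Claim (ii) is essentially immediate: the lower bound holds because the integrand defining $\mathcal{F}^{\omega}$ is nonnegative, and the upper bound is precisely Proposition \ref{P: basic_upper_bound} applied with $x_{0} = 0$ and $I = (-h,h)$, recalling $\tilde{\varphi}^{\omega}(e,A,h) = \tilde{\Phi}^{\omega}(e,0,A \oplus_{e}(-h,h))$.

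For (iii), the key observation is that if $x \in \langle e \rangle^{\perp}$, then $\langle x, e \rangle = 0$, hence $T_{x} q_{e} = q_{e}$; in particular, translation by such an $x$ leaves the boundary datum unchanged. Moreover, since $x \in O_{e}(\mathbb{R}^{d - 1})$ we have $O_{e}(O_{e}^{-1}(x)) = x$ and therefore $x + \bigl(A \oplus_{e}(-h,h)\bigr) = \bigl(A + O_{e}^{-1}(x)\bigr) \oplus_{e}(-h,h)$. Given any competitor $u \in H^{1}(A \oplus_{e}(-h,h);[-1,1])$ with $u - q_{e} \in H^{1}_{0}(A \oplus_{e}(-h,h))$, the translate $T_{x}u$ is a competitor in the problem defining $\tilde{\Phi}^{\omega}\bigl(e,0,(A + O_{e}^{-1}(x)) \oplus_{e}(-h,h)\bigr)$, since $T_{x}u - q_{e} = T_{x}(u - q_{e})$ lies in $H^{1}_{0}$ of the translated cylinder. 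The change of variables $z = y + x$ together with the stationarity identity $\varphi^{\tau_{x}\omega}(y,\cdot) = \varphi^{\omega}(y + x,\cdot)$ yields
\[
\mathcal{F}^{\tau_{x}\omega}\bigl(u; A \oplus_{e}(-h,h)\bigr) = \mathcal{F}^{\omega}\bigl(T_{x}u; (A + O_{e}^{-1}(x)) \oplus_{e}(-h,h)\bigr).
\]
As $u \mapsto T_{x}u$ is a bijection between the two admissible classes preserving the energy, the corresponding infima coincide, which is exactly the asserted identity.

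For (i), I would use a gluing construction. By the remark following the definition of $\tilde{\Phi}^{\omega}$, for each $i$ there is a minimizer $u_{i} \in H^{1}(A_{i} \oplus_{e}(-h,h);[-1,1])$ with $u_{i} - q_{e} \in H^{1}_{0}(A_{i} \oplus_{e}(-h,h))$ and $\mathcal{F}^{\omega}(u_{i}; A_{i} \oplus_{e}(-h,h)) = \tilde{\varphi}^{\omega}(e,A_{i},h)$. Extend $w_{i} := u_{i} - q_{e}$ by zero to all of $A \oplus_{e}(-h,h)$; since $A_{i} \subseteq A$, this extension lies in $H^{1}_{0}(A \oplus_{e}(-h,h))$. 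Because the $A_{i}$ are pairwise disjoint, the $w_{i}$ have essentially disjoint supports, so $w := \sum_{i = 1}^{N} w_{i} \in H^{1}_{0}(A \oplus_{e}(-h,h))$, and $u := q_{e} + w$ is admissible: it agrees with $u_{i}$ on $A_{i} \oplus_{e}(-h,h)$ and with $q_{e}$ elsewhere, hence takes values in $[-1,1]$, and $u - q_{e} = w \in H^{1}_{0}$. The residual set $\bigl(A \oplus_{e}(-h,h)\bigr) \setminus \bigcup_{i}\bigl(A_{i} \oplus_{e}(-h,h)\bigr)$ is contained in $\bigl(A \setminus \bigcup_{i} A_{i}\bigr) \oplus_{e}(-h,h)$, whose $\mathcal{L}^{d}$-measure is $2h\,\mathcal{L}^{d - 1}\bigl(A \setminus \bigcup_{i} A_{i}\bigr) = 0$, so it contributes nothing to $\mathcal{F}^{\omega}$. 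Therefore $\mathcal{F}^{\omega}(u; A \oplus_{e}(-h,h)) = \sum_{i} \mathcal{F}^{\omega}(u_{i}; A_{i} \oplus_{e}(-h,h)) = \sum_{i} \tilde{\varphi}^{\omega}(e,A_{i},h)$, and minimality of $\tilde{\varphi}^{\omega}(e,A,h)$ gives \eqref{E: subadditive_property}.

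None of the three parts is genuinely hard, but the most delicate point is the verification in (i) that the glued function $u$ truly lies in the admissible class — that it belongs to $H^{1}$ and respects the boundary condition relative to $q_{e}$ — which rests on each minimizer matching $q_{e}$ on $\partial(A_{i} \oplus_{e}(-h,h))$ in the $H^{1}_{0}$ sense, together with the fact that the exceptional region carries no $\mathcal{L}^{d}$-mass so that no energy is lost in passing from $\bigcup_{i}(A_{i} \oplus_{e}(-h,h))$ to all of $A \oplus_{e}(-h,h)$.
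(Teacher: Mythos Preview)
Your proposal is correct and follows essentially the same approach as the paper: (ii) is Proposition~\ref{P: basic_upper_bound}, (iii) is unwound from the definitions (you give more detail than the paper, which simply declares it immediate), and (i) is the same gluing of minimizers along the partition, phrased via the differences $w_{i} = u_{i} - q_{e}$ rather than defining $u$ piecewise. The only cosmetic difference is that the paper defines $u$ directly on the pieces and notes $u - q_{e} \in H^{1}_{0}$, whereas you build $u - q_{e}$ first; the content is identical.
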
 

\begin{proof}  First, observe that (ii) follows directly from Proposition \ref{P: basic_upper_bound}, and (iii) is an immediate consequence of the definitions of $\tilde{\varphi}^{\omega}(e,A,h)$, $\{\tau_{x}\}_{x \in \mathbb{R}^{d}}$, and $\varphi^{\omega}$.  

Next, we prove (i).  Suppose $A,A_{1},\dots,A_{N} \in \mathcal{U}_{0}^{d - 1}$ are given and satisfy the assumptions.  For each $i \in \{1,2,\dots,N\}$, pick $u_{i} \in H^{1}(A_{i} \oplus_{e} (-h,h); [-1,1])$ such that 
\begin{itemize}
\item[(i)] $\mathcal{F}^{\omega}(u_{i}, A_{i} \oplus_{e} (-h,h)) = \tilde{\varphi}^{a}(e,A_{i},h)$ 
\item[(ii)] $u_{i} - q_{e} \in H^{1}_{0}(A_{i} \oplus_{e} (-h,h))$
\end{itemize} 
Define a new function $u : A \oplus_{e} (-h,h) \to [-1,1]$ by 
\begin{equation*}
u(x) = \left\{ \begin{array}{r l}
			u_{i}(x), & x \in A_{i} \oplus_{e} (-h,h) \\
			0, & \text{otherwise}
		\end{array} \right.
\end{equation*}
By the choice of boundary conditions, $u - q_{e} \in H^{1}_{0}(A \oplus_{e} (-h,h))$.  Moreover, 
\begin{equation*}
\tilde{\varphi}^{\omega}(e,A,h) \leq \mathcal{F}^{\omega}(u,A \oplus_{e} (-h,h)) = \sum_{i = 1}^{N} \mathcal{F}^{\omega}(u_{i}, A_{i} \oplus_{e} (-h,h)) = \sum_{i = 1}^{N} \tilde{\varphi}^{\omega}(e,A_{i},h).
\end{equation*}
This establishes (i).   \end{proof}   

We now use the sub-additive ergodic theorem to average out the variations in the medium in directions perpendicular to $e$:

\begin{prop} \label{P: finite horizon limit}  For each $e \in S^{d - 1}$ and $h > 0$, there is a $\Sigma_{e}$-measurable random variable $\tilde{\varphi}^{\omega}(e,h)$ and an event $\Omega_{h} \in \Sigma_{e}$ satisfying $\mathbb{P}(\Omega_{h}) = 1$ such that
\begin{equation*}
\tilde{\varphi}^{\omega}(e,h) = \lim_{R \to \infty} R^{1 - d} \tilde{\varphi}^{\omega}(e,Q(0,R),h) \quad \text{if} \, \, \omega \in \Omega_{h}.
\end{equation*}
 \end{prop}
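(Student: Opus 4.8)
The plan is to apply the multiparameter subadditive ergodic theorem (in the form due to Akcoglu--Krengel, as adapted to continuum stationary settings; see Appendix~\ref{A: ergodic theory}) to the process $A \mapsto \tilde{\varphi}^{\omega}(e,A,h)$ on bounded open subsets of $\mathbb{R}^{d-1}$. Proposition~\ref{P: important_properties} furnishes exactly the three hypotheses one needs: subadditivity with respect to essentially disjoint decompositions (part (i)), the bound $0 \le \tilde{\varphi}^{\omega}(e,A,h) \le C_{\Lambda}\mathcal{L}^{d-1}(A)$ which gives integrability uniformly in $A$ (part (ii)), and the covariance relation $\tilde{\varphi}^{\tau_x \omega}(e,A,h) = \tilde{\varphi}^{\omega}(e, A + O_e^{-1}(x), h)$ for $x \in \langle e \rangle^{\perp}$ (part (iii)), which identifies the correct group of translations, namely $(\tau_{O_e(z)})_{z \in \mathbb{R}^{d-1}}$ acting on $(\Omega,\mathscr{B},\mathbb{P})$. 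Note this action is measure-preserving because $\tau$ is, and the relevant invariant $\sigma$-algebra for this subgroup is precisely $\Sigma_e$.

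The key steps, in order, are as follows. First I would verify that the pushed-forward action $z \mapsto \tau_{O_e(z)}$ is a measurable, measure-preserving $\mathbb{R}^{d-1}$-action, so that the hypotheses of the continuum subadditive ergodic theorem are met. Second, I would check the regularity/continuity conditions that version of the theorem requires on the set function — typically that $\tilde{\varphi}^{\omega}(e,\cdot,h)$ is appropriately monotone or continuous with respect to the family of cubes $Q(0,R)$; here subadditivity together with the linear upper bound (ii) suffices to control the behavior along the cube exhaustion $Q(0,R)$ as $R \to \infty$, and one can compare $Q(0,R)$ with $Q(0,R')$ for nearby $R,R'$ using (i) and (ii) to absorb the boundary layer, whose measure is $O(R^{d-2})$ and hence negligible after dividing by $R^{d-1}$. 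Third, I would invoke the theorem to conclude that $R^{1-d}\tilde{\varphi}^{\omega}(e,Q(0,R),h)$ converges $\mathbb{P}$-almost surely, and that the limit, call it $\tilde{\varphi}^{\omega}(e,h)$, is measurable with respect to the invariant $\sigma$-algebra $\Sigma_e$. Fourth, I would record that the exceptional null set, $\Omega \setminus \Omega_h$, lies in $\Sigma_e$ (equivalently, that $\Omega_h \in \Sigma_e$ and $\mathbb{P}(\Omega_h)=1$), which follows since both the limit function and the set on which convergence holds are built from $\Sigma_e$-measurable data.

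The main obstacle I anticipate is purely bookkeeping rather than conceptual: one must be careful that the subadditive ergodic theorem is being applied in the correct dimension — the process lives on subsets of $\mathbb{R}^{d-1}$, not $\mathbb{R}^d$, and the scaling exponent $R^{1-d}$ is the $(d-1)$-dimensional volume normalization for that lower-dimensional problem, with the $e$-direction and the parameter $h$ held fixed throughout. A secondary technical point is matching the hypotheses of the precise version of the ergodic theorem cited in Appendix~\ref{A: ergodic theory}: depending on its exact statement (e.g.\ whether it demands exact additivity on a generating algebra of rectangles, or tolerates genuine subadditivity on arbitrary open sets), one may need to first restrict attention to cubes $Q(0,R)$ and coordinate rectangles, deduce convergence there, and only afterwards extend — but since the statement of the proposition only asserts convergence along $Q(0,R)$, this restriction is harmless. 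The bound (ii) guarantees the limit is finite, and the strict lower bound $\tilde{\varphi}^{\omega} > 0$ — which will be needed later but not here — can be deferred. Everything else reduces to citing Proposition~\ref{P: important_properties} and the ergodic theorem of the appendix.
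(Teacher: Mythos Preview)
Your proposal is correct and matches the paper's approach exactly: the paper's proof is a single sentence stating that this is a direct application of Theorem~\ref{T: sub-additive_ergodic_theorem} in Appendix~\ref{A: ergodic theory}, and Proposition~\ref{P: important_properties} supplies precisely the hypotheses of Definition~\ref{D: sub-additive_process} and the bound~\eqref{E: bounded_process}. Your concerns about hypothesis-matching are moot, since the appendix theorem is tailored to this setting (processes on $\mathcal{U}_0^{d-1}$ covariant under $(\tau_x)_{x \in \langle e\rangle^\perp}$) and already records the $\Sigma_e$-measurability of both the limit and the full-measure event.
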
   
 
 \begin{proof}  This is a direct application of Theorem \ref{T: sub-additive_ergodic_theorem} in Appendix \ref{A: ergodic theory}.   
 \end{proof}

% Since in what follows we will be interested in varying $A$ and $h$, we record the following measurability result:
% 
% \begin{prop} \label{P: measurability}  If $A \in \mathcal{U}_{0}^{d - 1}$, then the random function $h \mapsto \tilde{\varphi}^{a}(e,A,h)$ is (surely) continuous.  Similarly, the random function $(R,h) \mapsto \tilde{\varphi}^{a}(e,Q(0,R),h)$ is (surely) continuous.  In particular, the mapping $(a,R,h) \mapsto \tilde{\varphi}^{a}(e,R,h)$ is jointly measurable.  \end{prop}  
% 
% \begin{proof}  The continuity is proved in Appendix \ref{A: } using PDE methods.  Joint measurability follows from the measurability in both variables individually and the continuity in $(R,h)$ (REFERENCE?).  \end{proof}  

 \subsection{Infinite horizon limit}  We now study the limit $h \to \infty$ of $\tilde{\varphi}^{\omega}(e,A,h)$.  We begin by defining what we call the infinite-horizon surface tension:
 
 \begin{definition}  The infinite-horizon surface tension is the random function $\tilde{\varphi}_{\infty}^{\omega} : S^{d -1} \times \mathcal{U}_{0}^{d - 1} \to (0,\infty)$ given by 
 \begin{equation*}
 \tilde{\varphi}^{\omega}_{\infty}(e,A) = \min \left\{ \mathcal{F}^{\omega}(u; A \oplus_{e} \mathbb{R}) \, \mid \, -1 \leq u \leq 1, \, \, u = q_{e} \, \, \text{on} \, \, \partial A \oplus_{e} \mathbb{R} \right\}.
 \end{equation*}
 \end{definition}  
 
In the main result of this section, we prove that $\lim_{h \to \infty} \tilde{\varphi}^{\omega}(e,A,h) = \tilde{\varphi}^{\omega}_{\infty}(e,A)$, consistent with the way $\tilde{\varphi}^{\omega}_{\infty}(e,A)$ was defined in the introduction.

Before we proceed further, we remark that $\tilde{\varphi}^{\omega}_{\infty}$ satisfies its own version of Proposition \ref{P: important_properties}.  Therefore, the following analogue of Proposition \ref{P: finite horizon limit} holds:

\begin{prop} \label{P: ergodic_infinity}  For each $e \in S^{d - 1}$, there is a $\Sigma_{e}$-measurable random variable $\tilde{\varphi}^{\omega}_{\infty}(e)$ and an event $\Omega_{\infty} \in \Sigma_{e}$ satisfying $\mathbb{P}(\Omega_{\infty}) = 1$ such that
\begin{equation} \label{E: key}
\tilde{\varphi}^{\omega}_{\infty}(e) = \lim_{R \to \infty} R^{1 - d} \tilde{\varphi}^{\omega}_{\infty}(e,Q(0,R)) \quad \text{if} \, \, \omega \in \Omega_{\infty}.
\end{equation}
%Moreover, with probability one, we have
%\begin{align} \label{E: key2}
%\tilde{\varphi}^{a}_{\infty}(e) &= \inf\left\{ R^{1 - d} \mathbb{E}(\tilde{\varphi}^{a}_{\infty}(e,Q(0,R)) \mid \Sigma_{e}) \, \mid \, R > 0\right\} \\
%		&= \lim_{R \to \infty} R^{1 - d} \mathbb{E}(\tilde{\varphi}^{a}_{\infty}(e,Q(0,R)) \mid \Sigma_{e}). \nonumber
%\end{align}
\end{prop}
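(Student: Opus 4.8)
The plan is to reduce Proposition \ref{P: ergodic_infinity} to the subadditive ergodic theorem (Theorem \ref{T: sub-additive_ergodic_theorem} of Appendix \ref{A: ergodic theory}), exactly as Proposition \ref{P: finite horizon limit} was deduced from the same result. The only thing that needs to be checked is that $A \mapsto \tilde{\varphi}^{\omega}_{\infty}(e,A)$ satisfies the analogue of Proposition \ref{P: important_properties}, i.e.\ subadditivity, equiboundedness, and the covariance relation under $\{\tau_x\}_{x \in \langle e\rangle^\perp}$; together with $\mathscr{B}$-measurability in $A$, these are the hypotheses of the subadditive ergodic theorem for processes indexed by bounded open subsets of $\mathbb{R}^{d-1}$.

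\emph{Covariance and boundedness.} The covariance identity $\tilde{\varphi}^{\tau_x\omega}_{\infty}(e,A) = \tilde{\varphi}^{\omega}_{\infty}(e,A + O_e^{-1}(x))$ for $x \in \langle e\rangle^\perp$ follows verbatim from the corresponding computation in Proposition \ref{P: important_properties}(iii): translating $A \oplus_e \mathbb{R}$ by $x$ and changing variables in $\mathcal{F}^{\omega}$ swaps $\omega$ for $\tau_x\omega$ because $\varphi^{\tau_y\omega}(z,\cdot) = \varphi^{\omega}(z+y,\cdot)$, and the boundary condition $u = q_e$ on $\partial A \oplus_e \mathbb{R}$ is preserved since $q_e$ is constant in directions orthogonal to $e$. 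For the bound, using $T_0 q_e = q_e$ itself as a competitor and $\varphi^{\omega}(x,p)\le \sqrt{\Lambda}\|p\|$, Fubini gives $0 \le \tilde{\varphi}^{\omega}_{\infty}(e,A) \le C_{\Lambda}\mathcal{L}^{d-1}(A)$ exactly as in Proposition \ref{P: basic_upper_bound}; note $q_e$ has finite energy on $A \oplus_e \mathbb{R}$ precisely because of \eqref{E: BC}, which is what makes the minimum in the definition of $\tilde{\varphi}^{\omega}_{\infty}$ well-defined. Measurability in $\omega$ follows since the minimum can be realized as an infimum over a countable dense family of competitors in the appropriate function space, as for $\tilde{\Phi}^{\omega}$.

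\emph{Subadditivity.} This is the only point where the cylinder (rather than a bounded slab $A \oplus_e (-h,h)$) requires a small remark, and it is actually easier here than in Proposition \ref{P: important_properties}(i). Given $A, A_1, \dots, A_N$ as in the hypothesis, pick minimizers $u_i$ for $\tilde{\varphi}^{\omega}_{\infty}(e,A_i)$ on $A_i \oplus_e \mathbb{R}$ with $u_i = q_e$ on $\partial A_i \oplus_e \mathbb{R}$, and define $u$ to equal $u_i$ on $A_i \oplus_e \mathbb{R}$ and $q_e$ on the remainder of $A \oplus_e \mathbb{R}$. Since the $A_i$ are pairwise disjoint with $\mathcal{L}^{d-1}(A \setminus \bigcup A_i) = 0$, the matching boundary data make $u$ an admissible competitor for $\tilde{\varphi}^{\omega}_{\infty}(e,A)$ (the traces agree $\mathcal{H}^{d-1}$-a.e.\ on the interfaces, and the null set contributes nothing to the interface $\partial A_i \oplus_e \mathbb{R}$), and
\begin{equation*}
\tilde{\varphi}^{\omega}_{\infty}(e,A) \le \mathcal{F}^{\omega}(u; A \oplus_e \mathbb{R}) = \sum_{i=1}^{N} \mathcal{F}^{\omega}(u_i; A_i \oplus_e \mathbb{R}) = \sum_{i=1}^{N} \tilde{\varphi}^{\omega}_{\infty}(e,A_i),
\end{equation*}
where we used that $\mathcal{F}^{\omega}(q_e; (A\setminus \bigcup A_i)\oplus_e \mathbb{R}) = 0$ because that set has zero $(d-1)$-measure in its base. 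With these three properties in hand, Theorem \ref{T: sub-additive_ergodic_theorem} applied to the process $A \mapsto \tilde{\varphi}^{\omega}_{\infty}(e,A)$ along the cubes $Q(0,R)$ produces the $\Sigma_e$-measurable limit $\tilde{\varphi}^{\omega}_{\infty}(e)$ and the full-probability event $\Omega_\infty \in \Sigma_e$ on which \eqref{E: key} holds.

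\emph{Main obstacle.} There is no serious obstacle; the content is entirely in verifying that the infinite cylinder does not break subadditivity, and the key enabling fact is that $q_e$ is a finite-energy competitor on the whole cylinder $A \oplus_e \mathbb{R}$ (by \eqref{E: BC}) and is constant in the transverse directions, so gluing along transverse hyperplanes costs nothing. The one thing to be careful about is that the gluing produces a genuine $H^1_{\mathrm{loc}}$ function with the correct boundary trace on $\partial A \oplus_e \mathbb{R}$; this is routine since each $u_i - q_e$ vanishes (in trace sense) on $\partial A_i \oplus_e \mathbb{R}$ and $q_e$ is globally smooth. I would simply say that the argument of Propositions \ref{P: basic_upper_bound} and \ref{P: important_properties} carries over mutatis mutandis, replacing $(-h,h)$ by $\mathbb{R}$ throughout, and then cite Theorem \ref{T: sub-additive_ergodic_theorem}.
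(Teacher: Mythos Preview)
Your proposal is correct and follows exactly the approach the paper takes: the paper simply remarks that $\tilde{\varphi}^{\omega}_{\infty}$ ``satisfies its own version of Proposition~\ref{P: important_properties}'' and then invokes the subadditive ergodic theorem as in Proposition~\ref{P: finite horizon limit}. If anything, you have supplied more detail on the subadditivity and gluing than the paper itself, which leaves those verifications to the reader.
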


It only remains to analyze the infinite horizon limit $h \to \infty$.  We start by observing that $h \mapsto \tilde{\varphi}^{\omega}(e,A,h)$ is \emph{almost} non-increasing.
 
\begin{prop} \label{P: almost_decreasing_prop}  Fix $e \in S^{d - 1}$ and $A \in \mathcal{U}^{d - 1}_{0}$.  If $h_{1} > h _{2}$, then 
\begin{equation} \label{E: almost_decreasing}
\tilde{\varphi}^{\omega}(e,A,h_{1}) \leq \tilde{\varphi}^{\omega}(e,A,h_{2}) + \mathcal{L}^{d - 1}(A) e(h_{2}),
\end{equation} 
where 
\begin{equation*}
e(h) = \int_{\{|s| > h\}} \left(\frac{\Lambda q'(s)^{2}}{2} + W(q(s)) \right) \, ds.
\end{equation*}
In particular, for each $e \in S^{d - 1}$, $R > 0$, and $\omega \in \Omega$, $\lim_{h \to \infty} \tilde{\varphi}^{\omega}(e,Q^{e}(0,R),h)$ exists.  \end{prop}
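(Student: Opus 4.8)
The plan is to prove the inequality \eqref{E: almost_decreasing} by a cut-off argument: given a near-optimal competitor for $\tilde{\varphi}^{\omega}(e,A,h_{2})$, we modify it near the two new boundary faces $A \oplus_{e} \{\pm h_{1}\}$ so that it becomes an admissible competitor for $\tilde{\varphi}^{\omega}(e,A,h_{1})$, and then control the extra energy by $\mathcal{L}^{d-1}(A)\, e(h_{2})$. Concretely, let $u$ achieve $\tilde{\varphi}^{\omega}(e,A,h_{2}) = \mathcal{F}^{\omega}(u; A \oplus_{e} (-h_{2},h_{2}))$ with $u - q_{e} \in H^{1}_{0}(A \oplus_{e} (-h_{2},h_{2}))$. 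Define $\tilde{u}$ on $A \oplus_{e} (-h_{1},h_{1})$ by setting $\tilde{u} = u$ on $A \oplus_{e} (-h_{2},h_{2})$ and $\tilde{u} = q_{e}$ on the two collars $A \oplus_{e} ((-h_{1},-h_{2}] \cup [h_{2},h_{1}))$. Since $u = q_{e}$ on $\partial A \oplus_{e} (-h_{2},h_{2})$ in the trace sense and in particular on the faces $A \oplus_{e} \{\pm h_{2}\}$, the function $\tilde{u}$ lies in $H^{1}(A \oplus_{e} (-h_{1},h_{1}); [-1,1])$ and satisfies $\tilde{u} - q_{e} \in H^{1}_{0}(A \oplus_{e} (-h_{1},h_{1}))$; here I would note that matching boundary traces along an interior hypersurface is exactly what is needed for the glued function to stay in the Sobolev space, the same gluing used in the proof of subadditivity in Proposition \ref{P: important_properties}(i).

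Having produced an admissible $\tilde{u}$, I would estimate
\begin{equation*}
\tilde{\varphi}^{\omega}(e,A,h_{1}) \leq \mathcal{F}^{\omega}(\tilde{u}; A \oplus_{e} (-h_{1},h_{1})) = \mathcal{F}^{\omega}(u; A \oplus_{e} (-h_{2},h_{2})) + \mathcal{F}^{\omega}(q_{e}; A \oplus_{e} ((-h_{1},-h_{2}) \cup (h_{2},h_{1}))).
\end{equation*}
The first term on the right is exactly $\tilde{\varphi}^{\omega}(e,A,h_{2})$. For the second term, since $\varphi^{\omega}(x,p) \leq \sqrt{\Lambda}\|p\|$ and $q_{e}$ varies only in the $e$ direction, Fubini's theorem (exactly as in the proof of Proposition \ref{P: basic_upper_bound}) gives
\begin{equation*}
\mathcal{F}^{\omega}(q_{e}; A \oplus_{e} ((-h_{1},-h_{2}) \cup (h_{2},h_{1}))) \leq \mathcal{L}^{d-1}(A) \int_{\{h_{2} < |s| < h_{1}\}} \left(\frac{\Lambda q'(s)^{2}}{2} + W(q(s))\right) ds \leq \mathcal{L}^{d-1}(A)\, e(h_{2}),
\end{equation*}
where the last inequality just enlarges the domain of integration from $\{h_{2}<|s|<h_{1}\}$ to $\{|s|>h_{2}\}$. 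Combining the two estimates yields \eqref{E: almost_decreasing}.

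For the ``in particular'' clause, note that the map $h \mapsto \tilde{\varphi}^{\omega}(e,Q^{e}(0,R),h)$ is bounded (by Proposition \ref{P: basic_upper_bound}, it lies in $[0, C_{\Lambda}\mathcal{L}^{d-1}(Q(0,R))]$), and \eqref{E: almost_decreasing} shows it is non-increasing up to the additive error term $\mathcal{L}^{d-1}(Q(0,R))\, e(h_{2})$, where $e(h) \to 0$ as $h \to \infty$ because the integrand is in $L^{1}(\mathbb{R})$ by \eqref{E: BC} (the tail of an integrable function vanishes). A bounded function that is monotone up to a vanishing error has a limit: explicitly, $\limsup_{h\to\infty}$ and $\liminf_{h\to\infty}$ must coincide, since for any $h_{1} > h_{2}$ we have $\tilde{\varphi}^{\omega}(e,Q^{e}(0,R),h_{1}) \leq \tilde{\varphi}^{\omega}(e,Q^{e}(0,R),h_{2}) + \mathcal{L}^{d-1}(Q(0,R))\, e(h_{2})$, so taking $\limsup$ in $h_{1}$ and then $\liminf$ in $h_{2}$ gives $\limsup_{h\to\infty} \leq \liminf_{h\to\infty}$. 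I don't expect any serious obstacle here; the only point requiring a little care is the verification that the glued function $\tilde{u}$ genuinely has the right boundary regularity, but this is a standard trace-matching argument identical in spirit to the gluing in Proposition \ref{P: important_properties}.
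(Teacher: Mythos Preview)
Your proposal is correct and follows essentially the same approach as the paper: extend a minimizer for the smaller slab by $q_{e}$ on the collars, use $\varphi^{\omega}(x,p)\leq\sqrt{\Lambda}\|p\|$ and Fubini to bound the collar energy by $\mathcal{L}^{d-1}(A)\,e(h_{2})$, and then deduce existence of the limit via the $\limsup$-$\liminf$ argument. The paper's proof is identical in substance; your write-up is in fact slightly more careful in isolating the intermediate bound over $\{h_{2}<|s|<h_{1}\}$ before enlarging to $\{|s|>h_{2}\}$.
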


We will see that the proof of Proposition \ref{P: almost_decreasing_prop} does not use any properties of the probability space $(\Omega,\mathscr{B},\mathbb{P})$ or the action $\tau$.

\begin{proof}  Fix $\epsilon > 0$.  Choose a $u : A \oplus_{e} (-h_{2},h_{2}) \to [-1,1]$ such that $u - q_{e} \in H_{0}^{1}(A \oplus_{e} (-h_{2},h_{2}))$ and 
\begin{equation*}
\mathcal{F}^{\omega}(u; A \oplus_{e} (-h_{2},h_{2})) = \tilde{\varphi}^{\omega}(e,A, h_{2}).
\end{equation*}
Define $\tilde{u} : A \oplus (-h_{1},h_{1}) \to [-1,1]$ by 
\begin{equation*}
\tilde{u}(x) = \left\{ \begin{array}{r l}
				u(x), & x \in A \oplus_{e} (-h_{2},h_{2}) \\
				q_{e}(x), & \text{otherwise}
				\end{array} \right.
\end{equation*}
Then $\tilde{u} - q_{e} \in H^{1}_{0}(A \oplus_{e} (-h_{1},h_{1}))$ and
\begin{align*}
\tilde{\varphi}^{\omega}(e,A,h_{1}) &\leq \mathcal{F}^{\omega}(\tilde{u},A \oplus_{e} (-h_{1},h_{1})) \\
	&= \mathcal{F}^{\omega}(u,A \oplus (-h_{2},h_{2})) + \mathcal{L}^{d - 1}(A) \int_{\{|t| \in (h_{2},h_{1})\}} \left(\Lambda q'(t)^{2} + W(q(t)) \right) \, dt \\
	&\leq \tilde{\varphi}^{\omega}(e,A,h_{2}) + \mathcal{L}^{d - 1}(A) e(h_{2}).
\end{align*}
Thus, we obtain \eqref{E: almost_decreasing}.

Finally, sending $h_{1} \to \infty$ with $h_{2}$ fixed and then sending $h_{2} \to \infty$, we find
\begin{equation*}
\limsup_{h_{1} \to \infty} \tilde{\varphi}^{\omega}(e,A,h_{1}) \leq \liminf_{h_{2} \to \infty} \left(\tilde{\varphi}^{\omega}(e,A,h_{2}) + \mathcal{L}^{d - 1}(A) e(h_{2})\right) = \liminf_{h_{2} \to \infty} \tilde{\varphi}^{\omega}(e,A,h_{2}).  
\end{equation*}
This proves $\lim_{h \to \infty} \tilde{\varphi}^{\omega}(e,A,h)$ exists. \end{proof}

Finally, we prove the result that was promised at the beginning of this sub-section:

\begin{prop} \label{P: nice characterization}  For each $e \in S^{d - 1}$ and $A \in \mathcal{U}_{0}^{d - 1}$, we have
\begin{equation} \label{E: infinite-horizon}
\tilde{\varphi}^{\omega}_{\infty}(e,A) = \lim_{h \to \infty} \tilde{\varphi}^{\omega}(e,A,h).
\end{equation}
\end{prop}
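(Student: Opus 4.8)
The plan is to prove the two inequalities $\tilde{\varphi}^{\omega}_{\infty}(e,A) \leq \liminf_{h\to\infty}\tilde{\varphi}^{\omega}(e,A,h)$ and $\limsup_{h\to\infty}\tilde{\varphi}^{\omega}(e,A,h) \leq \tilde{\varphi}^{\omega}_{\infty}(e,A)$ separately; since $\tilde{\varphi}^{\omega}_{\infty}(e,A)$ is a fixed number, these together yield \eqref{E: infinite-horizon} (and, incidentally, re-derive the existence of the limit).

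\emph{Lower bound.} Fix $h>0$ and let $u_{h}$ attain $\tilde{\varphi}^{\omega}(e,A,h)$, so $u_{h}-q_{e}\in H^{1}_{0}(A\oplus_{e}(-h,h))$. Extend $u_{h}$ to $A\oplus_{e}\mathbb{R}$ by setting it equal to $q_{e}$ outside $A\oplus_{e}(-h,h)$; by the boundary condition the extension lies in $H^{1}_{\mathrm{loc}}$, has values in $[-1,1]$, and equals $q_{e}$ on $\partial A\oplus_{e}\mathbb{R}$, so it is admissible for $\tilde{\varphi}^{\omega}_{\infty}(e,A)$. Using $\varphi^{\omega}(x,p)\leq\sqrt{\Lambda}\|p\|$ and the fact that $q_{e}$ varies only along $e$, the energy picked up outside $A\oplus_{e}(-h,h)$ is at most $\mathcal{L}^{d-1}(A)e(h)$, precisely as in Proposition \ref{P: basic_upper_bound}. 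Hence $\tilde{\varphi}^{\omega}_{\infty}(e,A)\leq\tilde{\varphi}^{\omega}(e,A,h)+\mathcal{L}^{d-1}(A)e(h)$, and letting $h\to\infty$ gives the lower bound since $e(h)\to 0$.

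\emph{Upper bound.} Let $u^{*}$ attain $\tilde{\varphi}^{\omega}_{\infty}(e,A)$ and write $u^{*}(\cdot,t)$ for its slice $x'\mapsto u^{*}(x'\oplus_{e}t)$. I would form finite-volume competitors $v_{h}$ on $A\oplus_{e}(-h,h)$ by leaving $u^{*}$ untouched on $A\oplus_{e}(-h+1,h-1)$ and interpolating to $q_{e}$ on the two boundary slabs: fix $\zeta_{h}\in C^{\infty}$ depending only on $\langle x,e\rangle$, with $\zeta_{h}\equiv 1$ on $A\oplus_{e}(-h+1,h-1)$, $\zeta_{h}\equiv 0$ near $A\oplus_{e}\{-h,h\}$, and $\|D\zeta_{h}\|_{\infty}$ bounded uniformly in $h$, and set $v_{h}=\zeta_{h}u^{*}+(1-\zeta_{h})q_{e}$. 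Then $-1\leq v_{h}\leq 1$ and $v_{h}-q_{e}=\zeta_{h}(u^{*}-q_{e})\in H^{1}_{0}(A\oplus_{e}(-h,h))$, because $u^{*}-q_{e}$ vanishes on the lateral boundary and $\zeta_{h}$ vanishes near the caps. Since $\mathcal{F}^{\omega}(v_{h};A\oplus_{e}(-h+1,h-1))=\mathcal{F}^{\omega}(u^{*};A\oplus_{e}(-h+1,h-1))\leq\tilde{\varphi}^{\omega}_{\infty}(e,A)$, it remains only to show the energy of $v_{h}$ on the two slabs is $o(1)$ as $h\to\infty$. From $\|Dv_{h}\|^{2}\lesssim\|Du^{*}\|^{2}+\|Dq_{e}\|^{2}+\|D\zeta_{h}\|_{\infty}^{2}|u^{*}-q_{e}|^{2}$ on the slabs, the first two terms integrate to tails of the convergent integrals $\int\|Du^{*}\|^{2}$ and $\mathcal{L}^{d-1}(A)\int q'(t)^{2}\,dt$ (finite because $\varphi^{\omega}(x,p)\geq\sqrt{\lambda}\|p\|$ and by \eqref{E: BC}, respectively), while $\int_{A\oplus_{e}(h-1,h)}|u^{*}-q_{e}|^{2}\leq 2\int_{h-1}^{h}\|u^{*}(\cdot,t)-1\|_{L^{2}(A)}^{2}\,dt+2\mathcal{L}^{d-1}(A)\sup_{t\geq h-1}|1-q(t)|^{2}$; for the potential term, on each slab $v_{h}(\cdot,t)$ is a convex combination of $u^{*}(\cdot,t)$ and $q(t)$, both close to $1$ in $L^{2}(A)$, so $\int_{A}W(v_{h}(\cdot,t))\,dx'\to 0$ uniformly over the sliding slab by continuity and boundedness of $W$ at $1$. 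This yields $\tilde{\varphi}^{\omega}(e,A,h)\leq\mathcal{F}^{\omega}(v_{h};A\oplus_{e}(-h,h))\leq\tilde{\varphi}^{\omega}_{\infty}(e,A)+o(1)$.

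\emph{The missing input, and the main obstacle.} The slab estimates above rest on the deterministic claim that every $u$ with $\mathcal{F}^{\omega}(u;A\oplus_{e}\mathbb{R})<\infty$, $-1\leq u\leq 1$, and $u=q_{e}$ on $\partial A\oplus_{e}\mathbb{R}$ has $u(\cdot,t)\to 1$ in $L^{2}(A)$ as $t\to+\infty$ and $u(\cdot,t)\to-1$ as $t\to-\infty$. I would argue in three steps. First, by Fubini, $\mathcal{L}^{d-1}$-a.e.\ fibre $t\mapsto u(x'\oplus_{e}t)$ has finite energy $\int_{\mathbb{R}}(\tfrac{\lambda}{2}(u')^{2}+W(u))\,dt$; since $u'\in L^{2}$ makes it uniformly continuous and since each transit between the two wells costs a definite amount of energy, it has only finitely many such transits and therefore converges to some $\ell^{\pm}(x')\in\{-1,1\}$ as $t\to\pm\infty$. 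Second, using the pointwise inequality $\tfrac{\lambda}{2}\|D_{d-1}u\|^{2}+W(u)\geq|D_{d-1}\Psi(u)|$, where $D_{d-1}$ is the gradient in the directions orthogonal to $e$ and $\Psi(s)=\int_{-1}^{s}\sqrt{2\lambda W(r)}\,dr$, together with lower semicontinuity of the total variation along a sequence of slices $t=t_{k}\to\infty$ on which $\int_{A}(\tfrac{\lambda}{2}\|D_{d-1}u(\cdot,t)\|^{2}+W(u(\cdot,t)))\,dx'\to 0$, one gets that $\Psi(\ell^{\pm})$ has vanishing gradient on $A$, so $\ell^{\pm}$ is constant on each connected component of $A$. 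Third, a non-constant limit is impossible: if $\ell^{+}\equiv-1$ on some component $A_{0}$, then for all large $t$ the slice $u(\cdot,t)$ would be close to $1$ near $\partial A_{0}$ (by the lateral data) and close to $-1$ on a set of definite measure, forcing $\int_{A_{0}}|D_{d-1}\Psi(u(\cdot,t))|$ to stay bounded below by a positive constant uniformly in $t$ (relative isoperimetric inequality), whence $\mathcal{F}^{\omega}(u;A_{0}\oplus_{e}(T,\infty))=\infty$; the same use of the lateral data fixes the constant value of $\ell^{\pm}$ on every component as $\pm 1$. Dominated convergence then upgrades a.e.\ fibrewise convergence to the claimed slice convergence, uniformly in the tail. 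Step three --- excluding that a finite-energy configuration, in particular the minimizer $u^{*}$, relaxes into the wrong well over part of $A$ --- is the only genuinely nontrivial point; granting it together with the lower bound and the construction of $v_{h}$, the two inequalities combine to give \eqref{E: infinite-horizon}.
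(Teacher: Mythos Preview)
Your argument is correct in structure and matches the paper's: both prove the lower bound by extending a finite-volume minimizer by $q_e$, and the upper bound by truncating the infinite-cylinder minimizer with a cutoff in the $e$ variable on unit slabs. The difference lies entirely in how you establish that any finite-energy admissible $u$ on $A\oplus_e\mathbb{R}$ tends to $\pm 1$ as $\langle x,e\rangle\to\pm\infty$. Your three-step program (fibrewise limits, constancy of $\ell^{\pm}$ via the Modica--Mortola inequality on well-chosen slices, and exclusion of the wrong well via a relative isoperimetric/trace argument) is essentially sound, but Step~3 as written is imprecise: the lateral condition fixes only the \emph{trace} of $u(\cdot,t)$ on $\partial A_0$, not values ``near $\partial A_0$,'' and turning this into a uniform lower bound on $\int_{A_0}|D_{d-1}\Psi(u(\cdot,t))|$ is really a Poincar\'e/trace inequality in disguise, which for a general bounded open $A\in\mathcal{U}_0^{d-1}$ (no Lipschitz assumption in the paper) needs care.

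The paper bypasses all three steps with a single observation: since $u-q_e$ has zero trace on the lateral boundary $\partial A\oplus_e\mathbb{R}$, the Poincar\'e inequality for $H^1_0$ on the bounded cross-section $A$ (applied slicewise and integrated in $t$) gives
\[
\|u-q_e\|_{L^2(A\oplus_e\mathbb{R})}\leq C_A\,\|Du-Dq_e\|_{L^2(A\oplus_e\mathbb{R})}<\infty,
\]
and $u-q_e\in L^2$ on the infinite cylinder immediately yields the required convergence in measure as $\langle x,e\rangle\to\pm\infty$. This needs no boundary regularity of $A$, no fibre analysis, and no isoperimetric input; it is exactly the shortcut your Step~3 is reaching for.
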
  

\begin{proof}  If $h > 0$ and $u \in H^{1}(A \oplus (-h,h); [-1,1])$ equals $q_{e}$ on $\partial (A \oplus_{e} (-h,h))$, then the function $\tilde{u} \in H^{1}_{\text{loc}}(A \oplus_{e} \mathbb{R})$ given by 
\begin{equation*}
\tilde{u}(x) = \left\{ \begin{array}{r l}
					u(x), & |\langle x, e \rangle| \leq h \\
					q_{e}(x), & \text{otherwise}
				\end{array} \right.
\end{equation*} 
satisfies $\tilde{u} - q_{e} \in H^{1}_{0}(A \oplus_{e} \mathbb{R})$.  Thus,
\begin{equation*}
\tilde{\varphi}^{\omega}_{\infty}(e,A) \leq \mathcal{F}^{\omega}(\tilde{u}; A \oplus_{e} \mathbb{R}) \leq \mathcal{F}^{\omega}(u; A \oplus_{e} (-h,h)) + \mathcal{L}^{d -1}(A) e(h).
\end{equation*}
Since $u$ was arbitrary, we deduce that $\tilde{\varphi}^{\omega}_{\infty}(e,A) \leq \tilde{\varphi}^{\omega}(e,A,h) + \mathcal{L}^{d - 1}(A) e(h)$.  Sending $h \to \infty$, we conclude $ \tilde{\varphi}^{a}_{\infty}(e,A) \leq \lim_{h \to \infty} \tilde{\varphi}^{\omega}(e,A,h)$.    

To obtain the complementary inequality, let $u \in H^{1}_{\text{loc}}(A \oplus_{e} \mathbb{R}; [-1,1])$ be any function attaining the minimum in \eqref{E: infinite-horizon}.  We will show that it is possible to appropriately truncate $u$ without changing its energy too much.  

First, observe that for each $\delta > 0$,
\begin{align*}
\lim_{R \to \infty} \mathcal{L}^{d}(\{x \in A \oplus_{e} [R,+\infty) \, \mid \, |u(x) - 1| > \delta\}) &= 0 \\
\lim_{R \to \infty} \mathcal{L}^{d}(\{x \in A \oplus_{e} (-\infty,-R] \, \mid \, |u(x) + 1| > \delta\} &= 0.
\end{align*}
This is a consequence of the Poincar\'{e} inequality, which in this setting states 
\begin{equation*}
\|u - q_{e}\|_{L^{2}(A \oplus_{e} \mathbb{R})} \leq \|Du - Dq_{e}\|_{L^{2}(A \oplus_{e} \mathbb{R})}.
\end{equation*}

For each $n \in \mathbb{N}$, fix a smooth function $f_{n} : \mathbb{R} \to [0,1]$ such that $f_{n} \equiv 1$ in $[-n,n]$, $f_{n} \equiv 0$ in $\mathbb{R} \setminus [-(n + 1),n+1]$, and $|f_{n}'| \leq 2$.  Let $u_{n} \in H^{1}_{\text{loc}}(A \oplus_{e} \mathbb{R})$ be the function defined by 
\begin{equation*}
u_{n}(x) = f_{n}(\langle x, e \rangle) u(x) + (1 - f_{n}(\langle x, e \rangle)) q_{e}(x).
\end{equation*} 
We readily obtain the following bounds on the energy of $u_{n}$:
\begin{align*}
\mathcal{F}^{\omega}(u_{n}; A \oplus_{e} (-(n + 1),n + 1)) &\leq \mathcal{F}^{\omega}(u; A \oplus_{e} (-n,n))  \\
		&\quad \quad + \frac{\Lambda}{2} \int_{A \oplus_{e} \{n < |s| < n + 1\}} |Du(x)|^{2} \, dx \\
		&\quad \quad + \frac{\Lambda}{2} \mathcal{L}^{d -1}(A) \int_{\{n \leq |s| \leq n + 1\}} q'(s)^{2} \, ds \\
		&\quad \quad + 2 \int_{A \oplus_{e} \{n < | s | < n + 1\}} |u(x) - q_{e}(x)|^{2} \, dx \\
		&\quad \quad + \int_{A \oplus_{e} \{n < |s| < n + 1\}} W(u_{n}(x)) \, dx
\end{align*}   
Since $u, q_{e} \to \pm 1$ in measure as $\langle x, e \rangle \to \pm \infty$, we find  
\begin{equation*}
\mathcal{F}^{\omega}(u_{n}; A \oplus_{e} (-(n + 1),n + 1)) \leq \tilde{\varphi}^{\omega}_{\infty}(e,A) + o(1)
\end{equation*}
as $n \to \infty$.  Therefore, since $u_{n} = q_{e}$ on $A \oplus_{e} \{-(n + 1),n+1\}$, we conclude
\begin{equation*}
\lim_{h \to \infty} \tilde{\varphi}^{\omega}(e,A,h) \leq \lim_{n \to \infty} \mathcal{F}^{\omega}(u_{n}; A \oplus_{e} (-(n + 1),n + 1)) \leq \tilde{\varphi}^{\omega}_{\infty}(e,A).
\end{equation*}  
\end{proof}

\section{Infinite-volume surface tension}  \label{S: surface tension}

We now identify the infinite-volume surface tension $\tilde{\varphi}$.  To start with, it's convenient to define this as a random function.  We show the surface tension is deterministic almost surely using translation invariance, and soon thereafter we prove the thermodynamic limit.  

\begin{definition}  The infinite-volume surface tension is the random function $\tilde{\varphi}^{\omega} : S^{d - 1} \to [0,\infty)$ given by
\begin{equation} \label{E: infinite volume}
\tilde{\varphi}^{\omega}(e) = \liminf_{R \to \infty} R^{1 - d} \tilde{\varphi}^{\omega}(e,Q(0,R),R).
\end{equation}\end{definition}  

Our first observation is this quantity is translationally invariant:

\begin{theorem} \label{T: translation_invariance} For each $e \in S^{d - 1}$, $\tilde{\varphi}^{\omega}(e)$ is $\Sigma$-measurable. \end{theorem}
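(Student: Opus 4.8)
The plan is to show that $\tilde{\varphi}^{\tau_{v}\omega}(e) = \tilde{\varphi}^{\omega}(e)$ for every $v \in \mathbb{R}^{d}$ and every $\omega \in \Omega$. Since $\tilde{\varphi}^{\omega}(e)$ is $\mathscr{B}$-measurable (for each $R$ the map $\omega \mapsto \tilde{\varphi}^{\omega}(e,Q(0,R),R)$ is $\mathscr{B}$-measurable, and the $\liminf$ in \eqref{E: infinite volume} may be computed along $R \in \mathbb{N}$), this identifies every super-level set of $\tilde{\varphi}^{\cdot}(e)$ as a $\tau$-invariant event, i.e.\ an element of $\Sigma$. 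Decomposing $v = O_{e}(\tilde{v}) + te$ and using that $\{\tau_{x}\}_{x \in \langle e \rangle^{\perp}}$ together with $\{\tau_{se}\}_{s \in \mathbb{R}}$ generate the action, it suffices to treat the cases $v \in \langle e \rangle^{\perp}$ and $v = te$ separately; both arguments will be deterministic, so they produce the identity for all $\omega$, not merely almost surely.

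\textbf{The perpendicular case.} Fix $x \in \langle e \rangle^{\perp}$ and put $p = O_{e}^{-1}(x)$, so that by Proposition \ref{P: important_properties}(iii), $\tilde{\varphi}^{\tau_{x}\omega}(e,Q(0,R),R) = \tilde{\varphi}^{\omega}(e,Q(0,R)+p,R)$; thus we must compare $\liminf_{R} R^{1-d}$ of the centered surface tension over the translated cube $Q(0,R)+p$ with the one over $Q(0,R)$. For fixed $\delta \in (0,1)$ and $R$ large we have $Q(0,R(1-\delta)) \subseteq Q(0,R)+p$, so sub-additivity \eqref{E: subadditive_property} and the equi-bound \eqref{E: equibounded} give $\tilde{\varphi}^{\omega}(e,Q(0,R)+p,R) \leq \tilde{\varphi}^{\omega}(e,Q(0,R(1-\delta)),R) + C_{\Lambda}R^{d-1}(1-(1-\delta)^{d-1})$, while Proposition \ref{P: almost_decreasing_prop} lowers the height from $R$ to $R(1-\delta)$ at cost $o(R^{d-1})$. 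Dividing by $R^{d-1}$, reparametrizing by $S = R(1-\delta)$, and letting $R \to \infty$ gives $\tilde{\varphi}^{\tau_{x}\omega}(e) \leq (1-\delta)^{d-1}\tilde{\varphi}^{\omega}(e) + C_{\Lambda}(1-(1-\delta)^{d-1})$; sending $\delta \to 0^{+}$ yields $\tilde{\varphi}^{\tau_{x}\omega}(e) \leq \tilde{\varphi}^{\omega}(e)$, and applying this with $(\omega,x)$ replaced by $(\tau_{x}\omega,-x)$ gives the reverse inequality. I expect this case to be essentially routine.

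\textbf{The parallel case.} This is the main obstacle. Chasing the change of variables behind Proposition \ref{P: important_properties}(iii) — namely $\mathcal{F}^{\tau_{te}\omega}(u;\Omega) = \mathcal{F}^{\omega}(T_{te}u;\Omega+te)$ and $T_{te}q_{e}(\cdot) = q(\langle \cdot,e\rangle - t)$ — one finds $\tilde{\varphi}^{\tau_{te}\omega}(e,Q(0,R),R) = \tilde{\Phi}^{\omega}(e,te,Q(0,R)\oplus_{e}(t-R,t+R))$. Compared with $\tilde{\varphi}^{\omega}(e,Q(0,R),R) = \tilde{\Phi}^{\omega}(e,0,Q(0,R)\oplus_{e}(-R,R))$, the only change is that the transition profile is now pinned at height $t$ rather than $0$ along the lateral boundary of the cylinder, with the vertical window shifted accordingly; the ambient metric $\varphi^{\omega}$ is unchanged. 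The key deterministic fact is that this pinned profile can be ``bent'' back to a favorable configuration within a collar of bounded thickness around $\partial Q(0,R)$: because $q(\cdot)$ and $q(\cdot - t)$ both tend to $\pm 1$ and differ appreciably only in a bounded range of the $e$-coordinate, interpolating between them in such a collar costs an extra energy of order $\mathcal{H}^{d-2}(\partial Q(0,R)) = O(R^{d-2})$, and Proposition \ref{P: almost_decreasing_prop} (extending the competitor by the profile past the mismatched vertical windows) absorbs the remaining discrepancy at cost $O(R^{d-1}e(R)) = o(R^{d-1})$. Carrying out this interpolation carefully is the technical heart of the proof: one must control both the Dirichlet and the potential term $W$ in the collar, which I would do by first truncating the relevant minimizer in the $e$-direction using a pigeonhole choice of cut-off slab in the spirit of the proof of Proposition \ref{P: nice characterization}. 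The upshot is $|\tilde{\Phi}^{\omega}(e,te,Q(0,R)\oplus_{e}(t-R,t+R)) - \tilde{\varphi}^{\omega}(e,Q(0,R),R)| = o(R^{d-1})$, and hence, after dividing by $R^{d-1}$ and passing to the $\liminf$, $\tilde{\varphi}^{\tau_{te}\omega}(e) = \tilde{\varphi}^{\omega}(e)$.

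Combining the two cases shows $\tilde{\varphi}^{\omega}(e)$ is invariant under the full action $\{\tau_{v}\}_{v \in \mathbb{R}^{d}}$, hence $\Sigma$-measurable. The perpendicular case rests only on the sub-additivity and almost-monotonicity already recorded in Propositions \ref{P: important_properties} and \ref{P: almost_decreasing_prop}; the substantive point is the collar-interpolation estimate underlying the parallel case, which is precisely the mechanism that allows the transition layer to be slid along its normal without altering the leading-order energy.
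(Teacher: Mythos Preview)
Your proposal is correct and follows the same two-step decomposition as the paper: split $v = \tilde{x} + te$ with $\tilde{x} \in \langle e \rangle^{\perp}$, and prove invariance under $\tau_{\tilde{x}}$ and $\tau_{te}$ separately, both by purely deterministic arguments valid for every $\omega$. The perpendicular case is handled exactly as in the paper (Proposition~\ref{P: tangential invariance}), and your remark about reducing the $\liminf$ to integers for $\mathscr{B}$-measurability also matches the paper.

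For the parallel case the two executions differ. The paper (Proposition~\ref{P: vertical_invariance}) passes to macroscopic coordinates $\epsilon = R^{-1}$, so that the normal shift $te$ becomes $\epsilon te \to 0$; then the minimizer with boundary data $T_{\epsilon te}q^{\epsilon}_{e}$ and the comparison function $q^{\epsilon}_{e}$ share the same $L^{1}$ limit on the annulus $V$, and the fundamental estimate (Appendix~\ref{A: fundamental_estimate}) applies with empty bad set. You instead stay in microscopic coordinates and interpolate $q_{e}$ with $T_{te}q_{e}$ across a unit-width collar near $\partial Q(0,R)$. This is the same mechanism --- the fundamental estimate \emph{is} a pigeonhole-over-collars argument --- but your version carries a fixed profile mismatch rather than an infinitesimal one. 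The gradient cost in the collar is indeed $O(R^{d-2})$ as you claim; the potential term is the delicate point, since $\int_{\mathbb{R}} W(\lambda q(s) + (1-\lambda)q(s-t))\,ds$ need not be finite uniformly in $\lambda$ under the bare assumptions on $W$ and $q$. You flag this and propose the right fix (a pigeonhole truncation in the $e$-direction \`a la Proposition~\ref{P: nice characterization}); once that is done you get $o(R^{d-1})$, which suffices. The paper's rescaling simply sidesteps this issue.
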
  

An immediate consequence of the theorem and ergodicity is $\tilde{\varphi}^{\omega}(e)$ is constant almost surely.  We record this in the following corollary:

\begin{corollary}  There is a unique $\tilde{\varphi}(e) \geq 0$ depending only on $\mathbb{P}$ such that $\tilde{\varphi}^{\omega}(e) = \tilde{\varphi}(e)$ almost surely.  \end{corollary}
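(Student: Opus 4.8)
The plan is to deduce the corollary from Theorem~\ref{T: translation_invariance} together with the ergodicity hypothesis (iv) on the action $\tau$, via the $0$--$1$ law: a real-valued random variable measurable with respect to the invariant $\sigma$-algebra $\Sigma$ is constant almost surely. This is recorded among the ergodic-theoretic preliminaries in Appendix~\ref{A: ergodic theory}, but the argument is short enough to recall in place.

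Fix $e \in S^{d - 1}$. First I would check that $\tilde{\varphi}^{\omega}(e)$ is bounded: applying Proposition~\ref{P: basic_upper_bound} with $A = Q(0,R)$ and $I = (-R,R)$ gives $\tilde{\varphi}^{\omega}(e,Q(0,R),R) \leq C_{\Lambda} \mathcal{L}^{d-1}(Q(0,R)) = C_{\Lambda} R^{d-1}$, so that $R^{1-d}\tilde{\varphi}^{\omega}(e,Q(0,R),R) \leq C_{\Lambda}$ for all $R > 0$, whence $0 \leq \tilde{\varphi}^{\omega}(e) \leq C_{\Lambda}$ for every $\omega$. Next, consider the distribution function $G(t) = \mathbb{P}(\tilde{\varphi}^{\omega}(e) \leq t)$. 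By Theorem~\ref{T: translation_invariance}, for each $t \in \mathbb{R}$ the event $\{\tilde{\varphi}^{\omega}(e) \leq t\}$ lies in $\Sigma$, so ergodicity forces $G(t) \in \{0,1\}$. Since $G$ is non-decreasing, right-continuous, equal to $0$ for $t < 0$ and to $1$ for $t \geq C_{\Lambda}$, it must be the indicator of a half-line $[c,\infty)$ for a unique $c \in [0, C_{\Lambda}]$. Setting $\tilde{\varphi}(e) = c$, one gets $\tilde{\varphi}^{\omega}(e) = \tilde{\varphi}(e)$ for $\mathbb{P}$-almost every $\omega$.

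It remains to address the two qualifications in the statement. Uniqueness is immediate, since two constants that both agree with $\tilde{\varphi}^{\omega}(e)$ almost surely agree with each other. For the claim that $\tilde{\varphi}(e)$ depends only on $\mathbb{P}$: the law of the random variable $\omega \mapsto \tilde{\varphi}^{\omega}(e)$ is the Dirac mass $\delta_{\tilde{\varphi}(e)}$, and this law is a function of $\mathbb{P}$ alone, because $\tilde{\varphi}^{\omega}(e)$ is built from $\varphi^{\omega}$ --- equivalently from $\Phi$ and the action $\tau$ --- by minimizing $\mathcal{F}^{\omega}$ over fixed function classes and then applying the deterministic operation $\liminf_{R \to \infty} R^{1-d}(\cdot)$. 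Hence $\tilde{\varphi}(e)$, the location of the atom, is determined by $\mathbb{P}$.

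I do not expect any genuine obstacle here: all the substance is in Theorem~\ref{T: translation_invariance}, and, granting that, the corollary is a standard consequence of ergodicity. The only point requiring a moment's care is the finiteness of $\tilde{\varphi}(e)$, needed so that $G$ is truly the indicator of a half-line rather than identically zero; this is supplied by the a priori bound of Proposition~\ref{P: basic_upper_bound}.
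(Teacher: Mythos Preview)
Your proof is correct and follows exactly the route the paper intends: the paper simply records the corollary as ``an immediate consequence of the theorem and ergodicity,'' and you have spelled out that standard $0$--$1$ law argument in detail, together with the a priori bound from Proposition~\ref{P: basic_upper_bound}.
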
  

Theorem \ref{T: translation_invariance} will be proved in two steps.  In the first, we note that $\tilde{\varphi}^{\omega}(e)$ is invariant under translations in directions perpendicular to $e$.  This step follows from what we already proved in Section \ref{S: thermodynamic limit}, especially Propositions \ref{P: important_properties} and \ref{P: almost_decreasing_prop}.  In the second step, we show that $\tilde{\varphi}^{\omega}(e)$ is invariant under the action of translations in the $e$ direction.  The proof of this is very similar to that of Proposition \ref{P: nice characterization}. 

In the remainder of the section, we study the thermodynamic limit and prove Theorem \ref{T: thermodynamic_limit}.

\subsection{Tangential directions}  To simplify the proof of translation invariance, we first observe that $\tilde{\varphi}^{\omega}(e)$ is invariant under translations in directions perpendicular to $e$:

\begin{prop} \label{P: tangential invariance} $\tilde{\varphi}^{\omega}(e)$ is $\Sigma_{e}$-measurable.
\end{prop}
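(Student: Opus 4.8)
I would show that $\tilde{\varphi}^{\omega}(e)$ is invariant under $\tau_{x}$ for every $x \in \langle e \rangle^{\perp}$; granting that $\tilde{\varphi}^{\omega}(e)$ is $\mathscr{B}$-measurable (a routine matter, since it is built from the $\mathscr{B}$-measurable quantities $\tilde{\varphi}^{\omega}(e, Q(0,R), R)$), this invariance makes each sublevel set $\{\omega : \tilde{\varphi}^{\omega}(e) \leq t\}$ a $\tau_{x}$-invariant event for $x \in \langle e \rangle^{\perp}$, hence a member of $\Sigma_{e}$, which is the assertion. (When $d = 1$ there is nothing to do, as $\langle e \rangle^{\perp} = \{0\}$; so assume $d \geq 2$.) For $y \in \mathbb{R}^{d - 1}$ write
\begin{equation*}
L^{\omega}(y) = \liminf_{R \to \infty} R^{1 - d} \, \tilde{\varphi}^{\omega}(e, y + Q(0,R), R).
\end{equation*}
By Proposition \ref{P: important_properties}(iii), $\tilde{\varphi}^{\tau_{x}\omega}(e, Q(0,R), R) = \tilde{\varphi}^{\omega}(e, O_{e}^{-1}(x) + Q(0,R), R)$ for $x \in \langle e \rangle^{\perp}$, so $\tilde{\varphi}^{\tau_{x}\omega}(e) = L^{\omega}(O_{e}^{-1}(x))$ while $\tilde{\varphi}^{\omega}(e) = L^{\omega}(0)$. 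Since $O_{e}^{-1}$ maps $\langle e \rangle^{\perp}$ onto $\mathbb{R}^{d - 1}$, it suffices to prove that $y \mapsto L^{\omega}(y)$ is constant.

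\textbf{Main estimate.} The geometric input is that a translated cube is trapped between two centered cubes of nearly equal size: with $c = 2|a - b|_{\infty}$ one has $b + Q(0,R) \subseteq a + Q(0, R + c)$. Given a minimizer $u$ for $\tilde{\varphi}^{\omega}(e, b + Q(0,R), R) = \tilde{\Phi}^{\omega}(e, 0, (b + Q(0,R)) \oplus_{e} (-R,R))$, I would extend it to the larger cylinder $(a + Q(0,R+c)) \oplus_{e} (-R,R)$ by setting it equal to $q_{e}$ off $(b + Q(0,R)) \oplus_{e} (-R,R)$; since $u = q_{e}$ on the boundary of the inner cylinder, this extension is admissible, and bounding the contribution of $q_{e}$ on the collar by Fubini as in Proposition \ref{P: basic_upper_bound} (equivalently, applying \eqref{E: subadditive_property} with \eqref{E: equibounded}) gives
\begin{equation*}
\tilde{\varphi}^{\omega}(e, a + Q(0, R + c), R) \leq \tilde{\varphi}^{\omega}(e, b + Q(0,R), R) + C_{\Lambda}\bigl( (R + c)^{d - 1} - R^{d - 1} \bigr).
\end{equation*}
As $(R+c)^{d-1} - R^{d-1} = O(R^{d-2})$, multiplying by $R^{1 - d}$ and taking $\liminf$ removes the error term and yields $\liminf_{R \to \infty} R^{1 - d} \tilde{\varphi}^{\omega}(e, a + Q(0,R+c), R) \leq L^{\omega}(b)$.

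\textbf{Reconciling width and height.} It remains to recognize the left side as $L^{\omega}(a)$, and this is where the only real care is needed, because after a tangential translation the base cube $Q(0, R+c)$ and the transversal height $R$ no longer match. I would fix this in two harmless steps. First, substituting $R' = R + c$ and using $(R' - c)^{1 - d} = (R')^{1 - d}(1 + o(1))$ together with the nonnegativity of the energies, $\liminf_{R \to \infty} R^{1 - d} \tilde{\varphi}^{\omega}(e, a + Q(0,R+c), R) = \liminf_{R' \to \infty} (R')^{1 - d} \tilde{\varphi}^{\omega}(e, a + Q(0,R'), R' - c)$. Second, the almost-monotonicity in the height, Proposition \ref{P: almost_decreasing_prop}, gives $\tilde{\varphi}^{\omega}(e, a + Q(0,R'), R') \leq \tilde{\varphi}^{\omega}(e, a + Q(0,R'), R' - c) + (R')^{d - 1} e(R' - c)$; dividing by $(R')^{d - 1}$, using $e(h) \to 0$ as $h \to \infty$, and taking $\liminf$ gives $\liminf_{R' \to \infty} (R')^{1 - d} \tilde{\varphi}^{\omega}(e, a + Q(0,R'), R' - c) \geq L^{\omega}(a)$. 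Chaining the inequalities gives $L^{\omega}(a) \leq L^{\omega}(b)$, and since $a, b \in \mathbb{R}^{d - 1}$ were arbitrary, $L^{\omega}$ is constant, which completes the proof.

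\textbf{Where the difficulty lies.} Everything here is an application of tools already in hand; the one point that is not purely formal is the bookkeeping just described, namely checking that the $O(1)$ distortion of the base cube produced by a tangential translation is absorbed both in the cross-sectional area (via sub-additivity and the uniform bound \eqref{E: equibounded}) and in the transversal height (via the almost-monotonicity of Proposition \ref{P: almost_decreasing_prop}), without disturbing the coupling $h = R$ that is built into the definition of $\tilde{\varphi}^{\omega}(e)$.
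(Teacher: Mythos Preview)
Your proof is correct and follows essentially the same approach as the paper: both arguments show invariance under $\tau_{x}$ for $x \in \langle e \rangle^{\perp}$ by trapping a translated cube in a slightly larger centered one, controlling the cross-sectional excess via sub-additivity together with \eqref{E: equibounded}, and absorbing the height mismatch using the almost-monotonicity of Proposition \ref{P: almost_decreasing_prop}. Your organization via the auxiliary function $L^{\omega}$ and the explicit width/height reconciliation is slightly more explicit than the paper's presentation, but the substance is the same; the paper also addresses $\mathscr{B}$-measurability by reducing to a $\liminf$ over $\mathbb{N}$, which is what your ``routine matter'' amounts to.
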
  

\begin{proof}  We begin by showing that $\tilde{\varphi}^{\tau_{x} \omega}(e) = \tilde{\varphi}^{\omega}(e)$ if $x \in \langle e \rangle^{\perp}$.  At the end of the proof, we show $\tilde{\varphi}^{\omega}(e)$ is $\mathscr{B}$-measurable.  

Supppose $x \in \langle e \rangle^{\perp}$.  We will show that $\tilde{\varphi}^{\tau_{x} \omega}(e) \leq \tilde{\varphi}^{\omega}(e)$.  Let $y = O_{e}^{-1}(x)$.  If $R > 0$, then Proposition \ref{P: important_properties} implies
\begin{equation} \label{E: basic 1}
\tilde{\varphi}^{\tau_{x}\omega}(e,Q(0,R + |y|_{\infty}), R + |y|_{\infty}) = \tilde{\varphi}^{\omega}(e,Q(y,R + |y|_{\infty}),R + |y|_{\infty}).
\end{equation}
Since $Q(y,R + |y|_{\infty}) \supseteq Q(0,R)$, we use \eqref{E: subadditive_property} and \eqref{E: equibounded} from Proposition \ref{P: important_properties} to find
\begin{equation} \label{E: basic 2}
\tilde{\varphi}^{\omega}(e,Q(y,R + |y|_{\infty}),R + |y|_{\infty}) \leq \tilde{\varphi}^{\omega}(e,Q(0,R),R + |y|_{\infty}) + C_{\Lambda} R^{d - 1} o(1).
\end{equation}
as $R \to \infty$.  
Finally, appealing to \eqref{E: almost_decreasing} from Proposition \ref{P: almost_decreasing_prop} yields
\begin{equation} \label{E: basic 3}
\tilde{\varphi}^{\omega}(e,Q(y,R + |y|_{\infty}),R + |y|_{\infty}) \leq \tilde{\varphi}^{\omega}(e,Q(0,R),R) + R^{d - 1}(e(R) + C_{\Lambda} o(1)).
\end{equation}
Combining \eqref{E: basic 1}, \eqref{E: basic 2}, and \eqref{E: basic 3}, dividing by $R^{d - 1}$, and sending $R \to \infty$, we obtain $\tilde{\varphi}^{\tau_{x}\omega}(e) \leq \tilde{\varphi}^{\omega}(e)$.  

Replacing $x$ with $-x$ and $\omega$ with $\tau_{x}\omega$ yields $\tilde{\varphi}^{\omega}(e) \leq \tilde{\varphi}^{\tau_{x}\omega}(e)$.  Therefore, $\tilde{\varphi}^{\tau_{x} \omega}(e) = \tilde{\varphi}^{\omega}(e)$.  

It only remains to show that $\tilde{\varphi}^{\omega}(e)$ is $\mathscr{B}$-measurable.  To do so, it suffices to verify the following identity:
\begin{equation*}
\tilde{\varphi}^{\omega}(e) = \liminf_{\mathbb{N} \ni N \to \infty} N^{1 - d} \tilde{\varphi}^{\omega}(e,Q(0,N),N).
\end{equation*}
This can be derived using arguments very similar to those presented earlier in the proof.  We omit the details.   
\end{proof}  
 
 \subsection{The normal direction} Since any $x \in \mathbb{R}^{d}$ can be decomposed as $x = \tilde{x} + t e$ for some $t \in \mathbb{R}$ and $\tilde{x} \in \langle e \rangle^{\perp}$, we obtain the following decomposition of $\tau_{x}$: 
 \begin{equation*}
 \tau_{x} = \tau_{\tilde{x}} \circ \tau_{t e} = \tau_{te} \circ \tau_{\tilde{x}}.
 \end{equation*}
 Thus, since $\tilde{\varphi}^{\omega}(e)$ is $\Sigma_{e}$-measurable, Theorem \ref{T: translation_invariance} is proved as soon as we establish that $\tilde{\varphi}^{\omega}(e)$ is invariant under $(\tau_{te})_{t \in \mathbb{R}}$.  We prove this next using the fundamental estimate of $\Gamma$-convergence.
 
\begin{prop} \label{P: vertical_invariance}  If $\omega \in \Omega$ and $t \in \mathbb{R}$, then 
\begin{equation*}
\tilde{\varphi}^{\omega}(e) = \tilde{\varphi}^{\tau_{te}\omega}(e).
\end{equation*}
\end{prop}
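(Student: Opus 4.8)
The plan is to show that $\tilde{\varphi}^{\omega}(e) = \tilde{\varphi}^{\tau_{te}\omega}(e)$ for all $t \in \mathbb{R}$ and $\omega \in \Omega$ by comparing the energies of competitors in $Q^{e}(0,R)$ for the environments $\omega$ and $\tau_{te}\omega$. The key point is that shifting the environment by $te$ amounts to shifting the domain $Q^{e}(0,R)$ by $te$ in the $e$ direction; since the boundary condition $q_e$ is (asymptotically) constant far from $\langle e\rangle^\perp$, such a vertical shift of the box costs essentially nothing once $R$ is large. Concretely, I would first observe that by the stationarity of $\varphi^{\omega}$ one has
\begin{equation*}
\tilde{\varphi}^{\tau_{te}\omega}(e,Q(0,R),R) = \tilde{\Phi}^{\omega}(e, te, Q^{e}(0,R) + te),
\end{equation*}
so the task reduces to comparing $\tilde{\Phi}^{\omega}(e,te,Q^{e}(0,R)+te)$ with $\tilde{\Phi}^{\omega}(e,0,Q^{e}(0,R))$ up to an error that is $o(R^{d-1})$.

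The main step is a \emph{gluing / truncation} argument, in the spirit of the proof of Proposition~\ref{P: nice characterization}. Without loss of generality take $t > 0$. Given a near-optimal competitor $u$ for $\tilde{\Phi}^{\omega}(e,0,Q^{e}(0,R))$, I would first use the almost-monotonicity in $h$ (Proposition~\ref{P: almost_decreasing_prop}) to replace it, at negligible cost, by a competitor defined on the taller cylinder $Q(0,R)\oplus_e(-h,h)$ for some $h$ with $R \ll h$ that equals $q_e$ near the top and bottom caps; then I restrict attention to the sub-box $Q(0,R)\oplus_e(-R/2+t,\,R/2+t) = Q^e(0,R)+te$ and, on the thin slabs where this sub-box and the original box disagree, interpolate between $u$ and $q_e$ using a cutoff function in the $e$ variable with slope $O(R^{-1})$. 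Because $u$ and $q_e$ are both within $\delta$ of $\pm 1$ in measure on those slabs (by the Poincaré inequality as in Proposition~\ref{P: nice characterization}), and $W$ is continuous with $W(\pm 1)=0$, the gradient-interpolation term contributes $O(R^{d-1}\cdot R \cdot R^{-2}) = O(R^{d-2})$ and the potential term contributes $o(R^{d-1})$. This yields
\begin{equation*}
\tilde{\Phi}^{\omega}(e,te,Q^{e}(0,R)+te) \leq \tilde{\Phi}^{\omega}(e,0,Q^{e}(0,R)) + o(R^{d-1}),
\end{equation*}
and dividing by $R^{d-1}$ and taking $\liminf_{R\to\infty}$ gives $\tilde{\varphi}^{\tau_{te}\omega}(e) \leq \tilde{\varphi}^{\omega}(e)$. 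Replacing $t$ by $-t$ and $\omega$ by $\tau_{te}\omega$ gives the reverse inequality, completing the proof.

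The announced tool is the fundamental estimate of $\Gamma$-convergence (Appendix~\ref{A: fundamental_estimate}), which is exactly the clean abstract packaging of this interpolation-on-overlapping-regions argument: it lets one glue a competitor on $Q^e(0,R)+te$ with the fixed profile $q_e$ on a slightly larger set while controlling the error by the $L^1$-discrepancy of the two functions on the overlap times the perimeter of the overlap. Using it, the estimate above becomes almost automatic once one checks that on the relevant overlap region the optimal $u$ is $L^1$-close to $q_e$ — which, again, is the Poincaré-inequality observation from Proposition~\ref{P: nice characterization}. The main obstacle, and the place requiring care, is ensuring the error term is genuinely $o(R^{d-1})$ rather than $O(R^{d-1})$: a single cutoff over a slab of thickness comparable to $t$ produces a contribution of order $t\,R^{d-2}\cdot(\text{slope})^2$ times the slab thickness, so one must either spread the cutoff over the full height $h$ of the tall cylinder (making the slope $O(1/h)$) or exploit that $u \to q_e$ in a strong enough sense far from the interface; I expect the cleanest route is to first pass to $h \gg R$ via Proposition~\ref{P: almost_decreasing_prop}, perform the shift inside the tall cylinder where there is ample room, and only then restrict back down. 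The rest is routine bookkeeping with Fubini and the continuity of $W$.
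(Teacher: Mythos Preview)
Your overall strategy is the same as the paper's: reduce via stationarity to comparing $\tilde\Phi^\omega(e,0,\cdot)$ with $\tilde\Phi^\omega(e,te,\cdot)$ on nearby boxes, then glue with the fundamental estimate. But your detailed construction has a gap on the \emph{lateral} boundary. The admissible boundary datum for $\tilde\Phi^\omega(e,te,Q^e(0,R)+te)$ is $T_{te}q_e$, not $q_e$, and these two profiles differ on $\partial Q(0,R)\oplus_e\{\text{heights near }0\text{ and }t\}$---precisely along the transition layer. A cutoff ``in the $e$ variable'' alone cannot repair this, because your inner and outer sets share the same cross-section $Q(0,R)$: there is no tangential room in which to interpolate from $q_e$ to $T_{te}q_e$. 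The tall-cylinder detour does not help, since you ultimately restrict back to a box of cross-section $Q(0,R)$. (Relatedly, the Poincar\'e argument you borrow from Proposition~\ref{P: nice characterization} is stated there for a \emph{fixed} cross-section $A$; when $A=Q(0,R)$ grows, the Poincar\'e constant scales like $R$, and the conclusion that the minimizer is $L^1$-close to $\pm1$ on the slabs does not follow from it.)

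The paper's remedy is to introduce a scale parameter $\alpha\in(0,1)$: take the minimizer for $\tau_{te}\omega$ on the \emph{smaller} box $Q^e(0,\alpha R)$, shift it by $te$, extend by $T_{te}q_e$, and pass to macroscopic coordinates $\epsilon=R^{-1}$. For $\epsilon$ small the shifted box $Q^e(\epsilon te,\alpha)$ sits well inside $U'=Q^e(0,1)$ with a gap on \emph{all} sides, and on the annulus $V=U'\setminus Q^e(0,\alpha)$ both the rescaled competitor (which equals $T_{\epsilon te}q_e^\epsilon$ there) and $q_e^\epsilon$ converge in $L^1$ to $\chi_e$. The fundamental estimate then glues with $o(1)$ error, and one sends $\alpha\to1^-$ at the end. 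The missing idea in your write-up is this $\alpha$-shrinking, which creates room in the tangential directions, not just in the $e$ direction.
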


\begin{proof}  Fix $\omega \in \Omega$ and $t \in \mathbb{R}$.  We begin by showing that the following inequality holds:
\begin{equation*}
\tilde{\varphi}^{\omega}(e) \leq \tilde{\varphi}^{\tau_{te}\omega}(e)
\end{equation*}
 It is convenient to introduce a free parameter $\alpha \in (0,1)$.  In the final step of the proof, we will send $\alpha \to 1^{-}$.  

For each $R > 0$, let $\tilde{v}_{R} \in H^{1}(Q^{e}(0,\alpha R); [-1,1])$ be a minimizer of the functional $\mathcal{F}^{\tau_{te}\omega}(\cdot; Q^{e}(0,\alpha R))$ subject to the boundary conditions $\tilde{v}_{R} = q_{e}$, that is, 
\begin{equation*}
\tilde{v}_{R} - q_{e} \in H^{1}_{0}(Q^{e}(0, \alpha R)), \quad \mathcal{F}^{\tau_{te}\omega}(\tilde{v}_{R}; Q^{e}(0, \alpha R)) = \tilde{\varphi}^{\tau_{te}\omega}(e,Q(0,\alpha R), \alpha R).
\end{equation*}  

In order to compare $\tilde{\varphi}^{\omega}(e,Q(0,R),R)$ and $\tilde{\varphi}^{\tau_{te}\omega}(e,Q(0,\alpha R),\alpha R)$, we shift perspectives by defining the function $v_{R}$ in $Q^{e}(te,\alpha R)$ by 
\begin{equation*}
v_{R}(x) = \tilde{v}_{R}(x - te).
\end{equation*}  
Notice that $v_{R}$ minimizes $\mathcal{F}^{\omega}(\cdot; Q^{e}(te,\alpha R))$ with the boundary condition $T_{te}q_{e}$.  We extend $v_{R}$ to $\mathbb{R}^{d}$ by setting $v_{R}(x) = T_{te}q_{e}(x)$ if $x \in \mathbb{R}^{d} \setminus Q^{e}(te,\alpha R)$.

Finally, before we apply the fundamental estimate, it is convenient to move to macroscopic coordinates.  We let $\epsilon = R^{-1}$ and define a new function $v^{\epsilon} : \mathbb{R}^{d} \to [-1,1]$ by the following rule:
 \begin{equation*}
 v^{\epsilon}(x) = v_{R}\left(\frac{x}{\epsilon}\right).
 \end{equation*}        
The effect of the definition is this: $v^{\epsilon}$ minimizes $\mathcal{F}_{\epsilon}^{\omega}(\cdot ; Q^{e}(\epsilon t e, \alpha))$ subject to the boundary condition $T_{\epsilon te}(q^{\epsilon}_{e})$ on $\partial Q^{e}(\epsilon te, \alpha)$.

We will now apply the fundamental estimate with the $\epsilon$-independent open sets $U$, $U'$, and $V$ defined as follows:
\begin{equation*}
U = Q^{e}(0,\alpha), \, \, U' = Q^{e} \left(0,1\right), \, \, V = Q^{e}(0,1) \setminus Q^{e}(0, \alpha).
\end{equation*}
We will work with the functions $(v_{\epsilon})_{\epsilon > 0}$ and $(q_{e}^{\epsilon})_{\epsilon > 0}$.  Observe that 
\begin{equation*}
\lim_{\epsilon \to 0^{+}} \left( \|v^{\epsilon} - q^{\epsilon}_{e}\|_{L^{1}(V)} + \|q^{\epsilon}_{e} - \chi_{e}\|_{L^{1}(V)} \right) = 0.
\end{equation*}  
Thus, by the fundamental estimate, there is a function $\tilde{e} : (0,\infty) \to (0,\infty)$ such that $\lim_{\epsilon \to 0^{+}} \tilde{e}(\epsilon) = 0$ and a family of cut-off functions $(\psi_{\epsilon})_{\epsilon > 0} \subseteq C^{\infty}_{c}(U';[0,1])$ satisfying $\psi_{\epsilon} \equiv 1$ in $U$ such that
\begin{equation*}
\mathcal{F}_{\epsilon}^{\omega}(\psi_{\epsilon} v_{\epsilon} + (1 - \psi_{\epsilon})q_{e}^{\epsilon} ; U \cup V) \leq \mathcal{F}_{\epsilon}^{\omega}(v_{\epsilon}; U') + \mathcal{F}_{\epsilon}^{\omega}(q_{e}^{\epsilon}; V) + \tilde{e}(\epsilon).
\end{equation*}

Now observe that we can make the following simplifications:
\begin{align*}
\mathcal{F}^{\omega}_{\epsilon}(v_{\epsilon};U') &= R^{1 - d} \mathcal{F}^{\tau_{te}\omega}(\tilde{v}_{R}; Q^{e}(0,\alpha R)) + R^{1 -d} \mathcal{F}^{\omega}(v_{R}; RU' \setminus Q^{e}(te, \alpha R)) \\
		&= R^{1 - d} \tilde{\varphi}^{\tau_{te}a}(0,Q(0,\alpha R),\alpha R) + \omega(\alpha) + \eta(\alpha,R),
\end{align*} 
where 
\begin{align*}
\omega(\alpha) &\leq (1 - d) (1 - \alpha) C_{\Lambda} \\
\eta(\alpha,R) &\leq \alpha^{d -1} \int_{\{|s| \geq \alpha R\}} \left(\Lambda q'(s)^{2} + W(q(s)) \right) \, ds
\end{align*}
Similarly, $\mathcal{F}^{\omega}_{\epsilon}(q_{e}^{\epsilon}; V) = \nu(\alpha) + \gamma(\alpha, R)$, where
\begin{align*}
\nu(\alpha) &\leq  (1 - d) \left(1 - \alpha\right) C_{\Lambda} \\
\gamma(\alpha,R) &\leq \alpha^{d - 1} \int_{\{ |s| \geq \alpha R - |t|\}} \left(\Lambda q'(s)^{2} + W(q(s)) \right) \, ds
\end{align*}  
Sending $R \to \infty$ and observing that $\lim_{R \to \infty} (\eta(\alpha,R) + \gamma(\alpha,R)) = 0$, we obtain
\begin{align*}
\liminf_{R \to \infty} R^{1 -d} \tilde{\varphi}^{\omega}(e,Q(0,R),R) &\leq \liminf_{\epsilon \to 0^{+}} \mathcal{F}_{\epsilon}^{\omega}(\psi_{\epsilon} v_{\epsilon} + (1 - \psi_{\epsilon})q_{e}^{\epsilon} ; U \cup V) \\
	&\leq \alpha^{d -1} \liminf_{T \to \infty} T^{1 -d} \tilde{\varphi}^{\tau_{te}\omega}(0,Q(0,T),T) \\
	&\qquad+ \omega(\alpha) + \nu(\alpha).
\end{align*}
Since $\lim_{\alpha \to 1^{-}} \left(\omega_{R}(\alpha) + \nu(\alpha)\right) = 0$ and $\alpha \in (0,1)$ was arbitrary, we conclude
\begin{equation*}
\tilde{\varphi}^{\omega}(e) = \liminf_{R \to \infty} R^{1 -d} \tilde{\varphi}^{\omega}(e,Q(0,R),R) \leq \liminf_{R \to \infty} R^{1 -d} \tilde{\varphi}^{\tau_{te}\omega}(0,Q(0,R),R) = \tilde{\varphi}^{\tau_{te}\omega}(e).
\end{equation*}

Replacing $t$ with $-t$ and $\omega$ with $\tau_{te}\omega$, we find $\tilde{\varphi}^{\tau_{te}\omega}(e) \leq \tilde{\varphi}^{\omega}(e)$.
Therefore, $\tilde{\varphi}^{\tau_{te}\omega}(e) = \tilde{\varphi}^{\omega}(e)$.   
\end{proof}  

\subsection{Thermodynamic Limit}  Now we tie all the pieces together.  While we know that $\tilde{\varphi}^{\omega}(e)$ is a constant almost surely, we do not know that the limit inferior in its definition coincides with the limit superior.  We need to prove this occurs with probability one to proceed with the proof of $\Gamma$-convergence.  This is where Theorem \ref{T: thermodynamic_limit} comes in.  

In what follows, the following lemma will be helpful:

\begin{lemma} \label{L: helpful_ergodic} For each $n, m \in \mathbb{N}$, let $\mathbb{E}(\tilde{\varphi}^{\omega}(e,Q(0,2^{n}),m) \mid \Sigma_{e})$ and $\mathbb{E}(\tilde{\varphi}^{\omega}_{\infty}(e,Q(0,2^{n})) \mid \Sigma_{e})$ denote fixed representatives of the conditional expectations with respect to $\Sigma_{e}$ of $\tilde{\varphi}^{\omega}(e,Q(0,2^{n}),m)$ and $\tilde{\varphi}^{\omega}_{\infty}(e,Q(0,2^{n}))$, respectively.  There are $\Sigma_{e}$-measurable events $\Omega_{\leq}$, $\Omega_{\ell}$, and $\Omega_{\text{horizon}}$ satisfying the following properties:
\begin{itemize}
\item[(i)] $\mathbb{P}(\Omega_{\ell}) = \mathbb{P}(\Omega_{\leq}) =  \mathbb{P}(\Omega_{\text{horizon}}) = 1$
\item[(ii)] If $\omega \in \Omega_{\leq}$ and $n,m \in \mathbb{N}$, then
\begin{equation*}
2^{(n + 1)(1 - d)} \mathbb{E}(\tilde{\varphi}^{\omega}(e,Q(0,2^{n + 1}),m) \mid \Sigma_{e}) \leq 2^{n(1 - d)} \mathbb{E}(\tilde{\varphi}^{\omega}(e,Q(0,2^{n}),m) \mid \Sigma_{e}).
\end{equation*}
\item[(iii)] If $\omega \in \Omega_{\ell}$, then
\begin{align*}
\tilde{\varphi}^{\omega}(e,m) &= \lim_{n \to \infty} 2^{n(1 - d)} \mathbb{E}(\tilde{\varphi}^{\omega}(e,Q(0,2^{n}),m) \mid \Sigma_{e}) \\
\tilde{\varphi}^{\omega}(e) &= \lim_{n \to \infty} 2^{n(1 - d)} \mathbb{E}(\tilde{\varphi}^{\omega}(e,Q(0,2^{n})) \mid \Sigma_{e}).
\end{align*}
\item[(iv)] If $\omega \in \Omega_{\text{horizon}}$, then
\begin{align*}
\mathbb{E}(\tilde{\varphi}^{\omega}_{\infty}(e,Q(0,2^{n})) \mid \Sigma_{e}) = \lim_{m \to \infty} \mathbb{E}(\tilde{\varphi}^{\omega}(e,Q(0,2^{n}),m).
\end{align*}
\end{itemize}
\end{lemma}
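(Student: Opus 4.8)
The plan is to extract every item from the structural properties of the centered finite-volume surface tension established in Section~\ref{S: thermodynamic limit}, together with two standard facts about the $\sigma$-algebra $\Sigma_e$ (recalled in Appendix~\ref{A: ergodic theory}): first, that every $\Sigma_e$-measurable random variable is $\tau_x$-invariant for each $x \in \langle e\rangle^\perp$; and, consequently, that $\mathbb{E}(f \circ \tau_x \mid \Sigma_e) = \mathbb{E}(f \mid \Sigma_e)$ almost surely whenever $x \in \langle e\rangle^\perp$ and $f \in L^1$. The second follows from the first and the invariance of $\mathbb{P}$ under $\tau_x$: for any bounded $\Sigma_e$-measurable $g$ one has $\mathbb{E}[g\,(f\circ\tau_x)] = \mathbb{E}[(g\circ\tau_{-x})\,f] = \mathbb{E}[g\,f]$. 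Each of (ii)--(iv) is an almost sure statement indexed by the countable set of pairs $(n,m)$, so it suffices to verify the relevant identity or inequality for each fixed $(n,m)$ and then intersect the corresponding full-measure events; the resulting sets lie in $\Sigma_e$ because they are cut out by conditions on the fixed representatives of the conditional expectations and on their pointwise limits, all of which are $\Sigma_e$-measurable.

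For (ii), fix $n,m$ and decompose the cube $Q(0,2^{n+1}) \subseteq \mathbb{R}^{d-1}$, up to an $\mathcal{L}^{d-1}$-null set, into its $2^{d-1}$ dyadic subcubes $Q(y_1,2^n),\dots,Q(y_{2^{d-1}},2^n)$ of side length $2^n$. The sub-additivity property~\eqref{E: subadditive_property} yields
\[
\tilde{\varphi}^{\omega}(e,Q(0,2^{n+1}),m) \leq \sum_{j=1}^{2^{d-1}} \tilde{\varphi}^{\omega}(e,Q(y_j,2^n),m),
\]
and Proposition~\ref{P: important_properties}(iii) rewrites the $j$-th summand as $\tilde{\varphi}^{\tau_{O_e(y_j)}\omega}(e,Q(0,2^n),m)$, with $O_e(y_j)\in\langle e\rangle^\perp$. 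Applying $\mathbb{E}(\cdot\mid\Sigma_e)$ and using $\mathbb{E}(f\circ\tau_x\mid\Sigma_e)=\mathbb{E}(f\mid\Sigma_e)$ for $x\in\langle e\rangle^\perp$, each of the $2^{d-1}$ terms on the right becomes $\mathbb{E}(\tilde{\varphi}^{\omega}(e,Q(0,2^n),m)\mid\Sigma_e)$ almost surely, so $\mathbb{E}(\tilde{\varphi}^{\omega}(e,Q(0,2^{n+1}),m)\mid\Sigma_e) \leq 2^{d-1}\mathbb{E}(\tilde{\varphi}^{\omega}(e,Q(0,2^n),m)\mid\Sigma_e)$ almost surely. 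Multiplying by $2^{(n+1)(1-d)}$ and noting $2^{(n+1)(1-d)}2^{d-1}=2^{n(1-d)}$ gives the inequality in (ii); take $\Omega_{\leq}$ to be the intersection of these full-measure events over $n,m\in\mathbb{N}$.

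For (iii) and (iv) the mechanism is uniform: an almost sure pointwise convergence proved earlier, the uniform bound~\eqref{E: equibounded}, and the conditional dominated convergence theorem (which yields almost sure convergence of the conditional expectations). For the first identity in (iii), Proposition~\ref{P: finite horizon limit} says $\tilde{\varphi}^{\omega}(e,m)$ is $\Sigma_e$-measurable, hence equals its own conditional expectation, and $2^{n(1-d)}\tilde{\varphi}^{\omega}(e,Q(0,2^n),m)\to\tilde{\varphi}^{\omega}(e,m)$ almost surely along $R=2^n$, all terms lying in $[0,C_\Lambda]$; conditional dominated convergence then gives $2^{n(1-d)}\mathbb{E}(\tilde{\varphi}^{\omega}(e,Q(0,2^n),m)\mid\Sigma_e)\to\tilde{\varphi}^{\omega}(e,m)$ almost surely. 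The second identity in (iii) is the infinite-horizon analogue, obtained identically with Proposition~\ref{P: ergodic_infinity} in place of Proposition~\ref{P: finite horizon limit}. For (iv), Proposition~\ref{P: nice characterization} gives $\tilde{\varphi}^{\omega}(e,Q(0,2^n),m)\to\tilde{\varphi}^{\omega}_{\infty}(e,Q(0,2^n))$ as $m\to\infty$ for \emph{every} $\omega$, with all quantities in $[0,C_\Lambda 2^{n(d-1)}]$, so a further application of conditional dominated convergence yields the convergence of the conditional expectations. Then $\Omega_\ell$ is the intersection over $m$ of the events produced by conditional dominated convergence in (iii) (together with the one for the infinite-horizon identity), and $\Omega_{\text{horizon}}$ is the intersection over $n$ of those produced in (iv); as noted in the first paragraph, all three events lie in $\Sigma_e$, and (i) holds since each is full-measure.

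No estimate beyond those of Section~\ref{S: thermodynamic limit} is needed, so the argument is essentially bookkeeping. The only genuinely substantive point is the passage of $\tau_x$-invariance through the conditional expectation in (ii), which is exactly where the structure of $\Sigma_e$ enters; the remaining care goes into tracking the countably many null sets so that $\Omega_{\leq}$, $\Omega_\ell$, and $\Omega_{\text{horizon}}$ are simultaneously $\Sigma_e$-measurable and of full probability, and into checking that the fixed representatives of the conditional expectations may be used throughout (which is harmless, since two representatives differ only on a null set).
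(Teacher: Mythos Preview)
Your proof is correct and follows essentially the same route as the paper's: the dyadic decomposition plus the shift-invariance of $\mathbb{E}(\,\cdot\mid\Sigma_e)$ under $\tau_x$, $x\in\langle e\rangle^\perp$ (the paper's Lemma~\ref{L: shifting and conditioning}) for (ii), and conditional dominated convergence combined with Propositions~\ref{P: finite horizon limit}, \ref{P: ergodic_infinity}, and \ref{P: nice characterization} for (iii)--(iv). Your write-up is in fact slightly more explicit than the paper's about why the resulting events lie in $\Sigma_e$.
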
  

The idea of parts (ii) and (iii) is that the $\Sigma_{e}$-conditional expectation preserves, and even improves, the sub-additivity properties of the finite-volume surface tension.  As we will see, conditional expectation allows us to verify one of the crucial inequalities in the proof of Theorem \ref{T: thermodynamic_limit}.

For the most part, the lemma follows from what we have already proven and elementary properties of conditional expectations.  We defer the proof to the end of the sub-section and proceed to the proof of the thermodynamic limit.

%\begin{theorem} \label{T: commute_better}  For each $e \in S^{d -1}$, there is an $\tilde{\Omega}_{e} \in \Sigma_{e}$ such that $\mathbb{P}(\tilde{\Omega}_{e}) = 1$ and for each $\omega \in \tilde{\Omega}_{e}$, each $\kappa \in (0,\infty)$, and each open cube $Q \subseteq \mathbb{R}^{d -1}$, all of the following limits exist and satisfy
%\begin{align*}
%\tilde{\varphi}^{\omega}(e) \mathcal{L}^{d -1}(Q) &= \lim_{R \to \infty} R^{1 - d} \tilde{\varphi}^{\omega}_{\infty}(e,RQ) \\
%		&= \lim_{h \to \infty} \limsup_{R \to \infty} R^{1 - d} \tilde{\varphi}^{\omega}(e,RQ, h) \\
%			&= \lim_{h \to \infty} \liminf_{R \to \infty} R^{1 - d} \tilde{\varphi}^{\omega}(e,RQ, h) \\
%		&= \lim_{R \to \infty} R^{1 - d} \tilde{\varphi}^{\omega}(e,RQ,\kappa R)
%\end{align*}
%\end{theorem}

\begin{proof}[Proof of Theorem \ref{T: thermodynamic_limit}]  We identify $\tilde{\Omega}_{e}$ using Propositions \ref{P: finite horizon limit} and \ref{P: ergodic_infinity} and Lemma \ref{L: helpful_ergodic}, and then we prove that the following relations hold:
\begin{align}
\lim_{R \to \infty} R^{1 - d} \tilde{\varphi}^{\omega}_{\infty}(e,Q(0,R)) &= \lim_{h \to \infty} \limsup_{R \to \infty} R^{1 - d} \tilde{\varphi}^{\omega}(e,Q(0,R),h) \label{E: sup-inf} \\
		&= \lim_{h \to \infty} \liminf_{R \to \infty} R^{1 - d} \tilde{\varphi}^{\omega}(e,Q(0,R),h) \nonumber \\
\lim_{R \to \infty} R^{1 - d} \tilde{\varphi}_{\infty}^{\omega}(e,Q(0,R)) &= \lim_{R \to \infty} R^{1 - d} \tilde{\varphi}^{\omega}(e,Q(0,R), \kappa R) \label{E: cubes}
\end{align}
Here $\kappa \in (0,\infty)$ is arbitrary.  Notice that once this is done, we set $\kappa = 1$ to conclude that each of these quantities equals $\tilde{\varphi}^{\omega}(e)$, which, in turn, equals $\tilde{\varphi}(e)$ almost surely.  
  
Let $\Omega_{\infty}$ be the event defined in Proposition \ref{P: ergodic_infinity} and let $\Omega_{0} = \bigcap_{n = 1}^{\infty} \Omega_{n}$, where $\{\Omega_{n}\}_{n \in \mathbb{N}}$ are the events defined in Proposition \ref{P: finite horizon limit}.  For each $n, m \in \mathbb{N}$, fix representatives $\mathbb{E}(\tilde{\varphi}^{\omega}(e,Q(0,2^{n}),m) \mid \Sigma_{e})$ and $\mathbb{E}(\tilde{\varphi}^{\omega}(e,Q(0,2^{n})) \mid \Sigma_{e})$ of the $\Sigma_{e}$-conditional expectations of $\tilde{\varphi}^{\omega}(e,Q(0,2^{n}),m)$ and $\tilde{\varphi}^{\omega}_{\infty}(e,Q(0,2^{n}))$, respectively.  Let $\Omega_{\leq}, \Omega_{\ell}, \Omega_{\text{horizon}} \in \Sigma_{e}$ be the sets defined in Lemma \ref{L: helpful_ergodic}.  Finally, define $\tilde{\Omega}_{e}$ by 
\begin{equation*}
\tilde{\Omega}_{e} = \Omega_{\ell} \cap \Omega_{\leq} \cap \Omega_{0} \cap \Omega_{\infty} \cap \{\omega \in \Omega \, \mid \, \tilde{\varphi}^{\omega}(e) = \tilde{\varphi}(e)\}.
\end{equation*}  
As the intersection of $\Sigma_{e}$-measurable events, $\tilde{\Omega}_{e} \in \Sigma_{e}$.  Similarly, $\mathbb{P}(\tilde{\Omega}_{e}) = 1$. 

Assume henceforth that $\omega \in \tilde{\Omega}_{e}$.

To establish \eqref{E: sup-inf}, we first prove 
\begin{equation} \label{E: easy_part_commute}
\lim_{R \to \infty} R^{1 -d} \tilde{\varphi}^{\omega}_{\infty}(e,Q(0,R)) \leq \lim_{h \to \infty} \liminf_{R \to \infty} R^{1 - d} \tilde{\varphi}^{\omega}(e,Q(0,R),h).
\end{equation}
Indeed, by \eqref{E: almost_decreasing} and \eqref{E: infinite-horizon}, 
\begin{equation*}
R^{1 - d} \tilde{\varphi}^{\omega}_{\infty}(e,Q(0,R)) \leq R^{1 - d} \tilde{\varphi}^{\omega}(e,Q(0,R), h) + e(h).
\end{equation*}
Thus, sending $R \to \infty$ and then $h \to \infty$, we obtain \eqref{E: easy_part_commute}.
%(Note to self: we get $\liminf_{R \to \infty}$ above without restricting to $\mathbb{Q}$...  May be able to work from that angle.)  

Next, we prove the inequality complementary to \eqref{E: easy_part_commute}, that is, we show that 
\begin{equation} \label{E: hard_part_commute}
\lim_{h \to \infty} \limsup_{R \to \infty} R^{1 - d} \tilde{\varphi}^{\omega}(e,Q(0,R),h) \leq \lim_{R \to \infty} R^{1 -d} \tilde{\varphi}^{\omega}_{\infty}(e,Q(0,R)).
\end{equation}
We begin by observing that if $h_{1} > h_{2}$, then
\begin{equation*}
 \limsup_{R \to \infty} R^{1 - d} \tilde{\varphi}^{\omega}(e,Q(0,R),h_{1}) \leq \limsup_{R \to \infty} R^{1 - d} \tilde{\varphi}^{\omega}(e,Q(0,R),h_{2}) + e(h_{2}).  
\end{equation*}
Thus, arguing as in Proposition \ref{P: almost_decreasing_prop}, we see that 
	\begin{equation*}
		\lim_{h \to \infty} \limsup_{R \to \infty} R^{1 -d} \tilde{\varphi}^{\omega}(e,Q(0,R),h) \quad \text{exists}
	\end{equation*} 
with no further assumptions on $\omega$.  

Since $\tilde{\Omega}_{e} \subseteq \Omega_{\ell}$, the following equations hold:
\begin{align*}
\lim_{R \to \infty} R^{1 - d} \tilde{\varphi}^{\omega}_{\infty}(e,Q(0,R)) &= \lim_{n \to \infty} 2^{n(1 -d)} \mathbb{E}(\tilde{\varphi}^{\omega}_{\infty}(e,Q(0,2^{n})) \mid \Sigma_{e}) \\
\lim_{h \to \infty} \limsup_{R \to \infty} R^{1 - d} \tilde{\varphi}^{\omega}(e,Q(0,R),h) &= \lim_{m \to \infty} \lim_{n \to \infty} 2^{n(1 - d)} \mathbb{E}(\tilde{\varphi}^{\omega}(e,Q(0,2^{n}),m)) \mid \Sigma_{e}).
\end{align*}  
Thus, to obtain an inequality between the left-hand sides it suffices to complete the easier task of comparing the right-hand sides.

To proceed, we observe that, by definition of $\Omega_{\leq}$,
\begin{equation*}
2^{(n + 1)(1 - d)} \mathbb{E}(\tilde{\varphi}^{\omega}(e,Q(0,2^{n +1}),m) \mid \Sigma_{e}) \leq 2^{n(1 - d)} \mathbb{E}(\tilde{\varphi}^{\omega}(e,Q(0,2^{n}),m) \mid \Sigma_{e}).
\end{equation*}
Consequently, for each fixed $k \in \mathbb{N}$,
\begin{align*}
\lim_{n \to \infty} 2^{n(1 - d)} \mathbb{E}(\tilde{\varphi}^{\omega}(e,Q(0,2^{n}),m) \mid \Sigma_{e}) 
	&\leq 2^{k(1 - d)} \mathbb{E}(\tilde{\varphi}^{\omega}(e,Q(0,2^{k}),m) \mid \Sigma_{e}).
\end{align*}
Sending $m \to \infty$ while $k$ remains fixed, we use the fact that $\tilde{\Omega}_{e} \subseteq \Omega_{\text{horizon}}$ to find
\begin{equation*}
\lim_{m \to \infty} \lim_{n \to \infty} 2^{n(1 - d)} \mathbb{E}(\tilde{\varphi}^{\omega}(e,Q(0,2^{n}),m) \mid \Sigma_{e}) \leq 2^{k(1 - d)} \mathbb{E}(\tilde{\varphi}^{\omega}_{\infty}(e,Q(0,2^{k})) \mid \Sigma_{e}).
\end{equation*}
Taking $k \to \infty$, this becomes
\begin{align*}
\lim_{m \to \infty} \lim_{n \to \infty} 2^{n(1 - d)} \mathbb{E}(\tilde{\varphi}^{\omega}(e,Q(0,2^{n}),m) \mid \Sigma_{e}) &\leq \lim_{k \to \infty} 2^{k(1 - d)} \mathbb{E}(\tilde{\varphi}^{\omega}_{\infty}(e,Q(0,2^{k})) \mid \Sigma_{e}).
\end{align*}
Appealing to the observation in the previous paragraph, we conclude \eqref{E: hard_part_commute} holds.
Now \eqref{E: sup-inf} follows from \eqref{E: easy_part_commute} and \eqref{E: hard_part_commute}.

We proceed to the proof of \eqref{E: cubes}.  Again, we break the equality into two inequalities.  Fix $h > 0$.  Recalling \eqref{E: almost_decreasing} and sending $R \to \infty$, we find
\begin{equation*}
\limsup_{R \to \infty} R^{1 - d} \tilde{\varphi}^{\omega}(e,Q(0,R),\kappa R) \leq \limsup_{R \to \infty} R^{1 - d} \tilde{\varphi}^{\omega}(e,Q(0,R),h) + e(h).
\end{equation*}
Sending $h \to \infty$ and appealing to \eqref{E: sup-inf}, this becomes
\begin{equation*}
\limsup_{R \to \infty} R^{1 - d} \tilde{\varphi}^{\omega}(e,Q(0,R),\kappa R) \leq \lim_{R \to \infty} R^{1 - d} \tilde{\varphi}^{\omega}_{\infty}(e,Q(0,R)).
\end{equation*}

To deduce the opposite inequality, first observe that, by passing to the limit $h \to \infty$ in \eqref{E: almost_decreasing}, we obtain
\begin{equation*}
R^{1 -d} \tilde{\varphi}^{\omega}_{\infty}(e,Q(0,R)) \leq R^{1 - d} \tilde{\varphi}^{\omega}(e,Q(0,R),\kappa R) + e(\kappa R).
\end{equation*}
Thus, we conclude, after sending $R \to \infty$,
\begin{equation*}
\lim_{R \to \infty} R^{1 - d} \tilde{\varphi}^{\omega}_{\infty}(e,Q(0,R)) \leq \liminf_{R \to \infty} R^{1 - d} \tilde{\varphi}^{\omega}(e,Q(0,R),\kappa R).
\end{equation*}
Therefore, $\lim_{R \to \infty} R^{1 - d} \tilde{\varphi}^{\omega}(e,Q(0,R),\kappa R)$ exists and \eqref{E: cubes} holds.  
\end{proof} 

Now that Theorem \ref{T: thermodynamic_limit} is proved, a few remarks are in order:

\begin{remark}  It is not hard to show 
	\begin{equation*}
		\left\{\omega \in \Omega \, \mid \, \lim_{R \to \infty} R^{1 - d} \tilde{\varphi}^{\omega}(e,Q(0,R),R) \, \, \text{exists} \right\} \in \Sigma.
	\end{equation*}
Theorem \ref{T: thermodynamic_limit} implies this event has full probability.    \end{remark} 

\begin{remark}  \label{R: silly remark} By definition of $\tilde{\varphi}^{\omega}(e,Q(0,R),R)$, the last limit in Theorem \ref{T: thermodynamic_limit}  implies that, for each $\rho > 0$,
\begin{equation*}
\lim_{R \to \infty} R^{1 - d} \tilde{\Phi}^{\omega}(e,0,Q^{e}(0,R \rho)) = \tilde{\varphi}(e) \rho^{d - 1} \quad \mathbb{P}\text{-almost surely}.  
\end{equation*}
\end{remark}

\begin{remark} \label{R: response} The same approach introduced in \cite{magnetic domains} and used in \cite{stochastic homogenization free discontinuity} also applies to the problem considered here.  However, it is necessary to choose a sufficiently nice boundary condition $q$.  

Assume that $q$ is smooth and, instead of \eqref{E: BC}, assume that there is an $L > 0$ such that 
\begin{equation*}
q(s) = -1 \quad \text{if} \, \, s \leq -L, \quad q(s) = 1 \quad \text{if} \, \, s \geq L.
\end{equation*}
In this case, $e(h) = 0$ if $h \geq L$.  As a consequence, one can obtain a sub-additive quantity by restricting attention to cubes in $L(\mathbb{Z}^{d -1} \oplus_{e} \mathbb{Z})$ and arguing as in \cite{stochastic homogenization free discontinuity}.  This leads to a slightly faster proof that the  limit $\lim_{N \to \infty} N^{1 - d} \tilde{\varphi}^{\omega}(e,Q(0,NL),NL)$ exists, without studying the commutativity of the limit $R \to \infty$, $h \to \infty$ as we have done here.

We emphasize that the proof given here does not depend on the choice of $q$ as long as it satisfies \eqref{E: BC}.       \end{remark}  

Now we turn to the proof of Lemma \ref{L: helpful_ergodic}:

\begin{proof}[Proof of Lemma \ref{L: helpful_ergodic}]  Define $\Omega_{\leq}$, $\Omega_{\ell}$, and $\Omega_{\text{horizon}}$ by 
\begin{align*}
\Omega_{\leq} &= \bigcap_{n,m \in \mathbb{N}} \left\{ \omega \in \Omega \, \mid \, 2^{(n + 1)(1- d)} \mathbb{E}(\tilde{\varphi}^{\omega}(e,Q(0,2^{n + 1}),m) \mid \Sigma_{e}) \leq \right. \\
			&\quad \quad \quad \left. 2^{n(1 - d)} \mathbb{E}(\tilde{\varphi}^{\omega}(e,Q(0,2^{n}),m) \mid \Sigma_{e}) \right\} \\
\Omega_{\ell} &= \left\{ \omega \in \Omega \, \mid \, 
\tilde{\varphi}^{\omega}_{\infty}(e) = \lim_{n \to \infty} 2^{n(1 -d)} \mathbb{E}(\tilde{\varphi}^{\omega}_{\infty}(e,Q(0,2^{n})) \mid \Sigma_{e}) \, \, \text{and} \, \, \right. \\
&\qquad \quad \left. \tilde{\varphi}^{\omega}(e,m) = \lim_{n \to \infty} 2^{n(1 - d)} \mathbb{E}(\tilde{\varphi}^{\omega}(e,Q(0,2^{n}),m)) \mid \Sigma_{e}) \, \, \text{if} \, \, m \in \mathbb{N} \right\} \\
\Omega_{\text{horizon}} &= \left\{ \omega \in \Omega \, \mid \, \mathbb{E}(\tilde{\varphi}_{\infty}^{\omega}(e,Q(0,2^{n})) \mid \Sigma_{e}) = \lim_{m \to \infty} \mathbb{E}(\tilde{\varphi}^{\omega}(e,Q(0,2^{n}),m) \mid \Sigma_{e}) \right\}.
\end{align*}
Since all the random variables involved are $\Sigma_{e}$-measurable, each of these events are also.  Therefore, it only remains to verify that $\mathbb{P}(\Omega_{\leq}) = \mathbb{P}(\Omega_{\ell}) = \mathbb{P}(\Omega_{\text{horizon}}) = 1$.  

We begin with $\Omega_{\leq}$.  Suppose $n,m \in \mathbb{N}$.  First, we use property (i) of Proposition \ref{P: important_properties} to find
\begin{equation*}
\tilde{\varphi}^{\omega}(e,Q(0,2^{n + 1}),m) \leq \sum_{y \in 2^{n -1} \{-1,1\}^{d - 1}} \tilde{\varphi}^{\omega}(e,Q(y,2^{n}),m) \quad \text{if} \, \, \omega \in \Omega.
\end{equation*}
By Lemma \ref{L: shifting and conditioning} in Appendix \ref{A: ergodic theory}, the following inequality holds almost surely:
\begin{equation*}
\mathbb{E}(\tilde{\varphi}^{\omega}(e,Q(0,2^{n + 1}),m) \mid \Sigma_{e}) \leq \sum_{y \in 2^{n - 1} \{-1,1\}^{d - 1}} \mathbb{E}(\tilde{\varphi}^{\tau_{- O_{e}(y)}\omega}(e,Q(y,2^{n},m) \mid \Sigma_{e}).
\end{equation*}
Applying property (ii) in Proposition \ref{P: important_properties}, we find
\begin{align*}
\mathbb{E}(\tilde{\varphi}^{\omega}(e,Q(0,2^{n + 1}),m) \mid \Sigma_{e}) &\leq \sum_{y \in 2^{n - 1} \{-1,1\}^{d - 1}} \mathbb{E}(\tilde{\varphi}^{\omega}(e,Q(0,2^{n}),m) \mid \Sigma_{e}) \\
	&= 2^{d - 1} \mathbb{E}(\tilde{\varphi}(e,Q(0,2^{n},m)) \mid \Sigma_{e}).
\end{align*}
Dividing by $2^{(n + 1)(d -1)}$ gives the inequality in the definition of $\Omega_{\leq}$.  We conclude that $\Omega_{\leq}$ contains the intersection of countably many events of full probability.  Therefore, $\mathbb{P}(\Omega_{\leq}) = 1$.  

To see that $\mathbb{P}(\Omega_{\ell}) = 1$, recall that if $n,m \in \mathbb{N}$, then the following equations hold almost surely by Propositions \ref{P: finite horizon limit} and \ref{P: ergodic_infinity}:
\begin{align*}
\tilde{\varphi}^{\omega}(e,m) &= \lim_{n \to \infty} 2^{n(1 - d)} \tilde{\varphi}^{\omega}(e,Q(0,2^{n},m)) \\
\tilde{\varphi}^{\omega}_{\infty}(e) &= \lim_{n \to \infty} 2^{n(1 - d)} \tilde{\varphi}^{\omega}_{\infty}(e,Q(0,2^{n})).
\end{align*}
Note that all of these quantities are bounded by $C_{\Lambda}$, and the random variables $\{\tilde{\varphi}^{\omega}(e,m)\}_{m \in \mathbb{N}}$ and $\tilde{\varphi}^{\omega}(e)$ are $\Sigma_{e}$-measurable.  Therefore, the conditional version of the dominated convergence theorem implies $\mathbb{P}(\Omega_{\ell}) = 1$.  

Since $\tilde{\varphi}^{\omega}_{\infty}(e,Q(0,2^{n})) = \lim_{m \to \infty} \tilde{\varphi}^{\omega}(e,Q(0,2^{n}),m)$ almost surely, the same arguments used to analyze $\Omega_{\ell}$ show that $\mathbb{P}(\Omega_{\text{horizon}}) = 1$.  
\end{proof}

\subsection{Other cubes} \label{S: other_cubes}  Before proceeding to the proof of $\Gamma$-convergence, we address the question of the thermodynamic limit of the finite-volume surface tension $\tilde{\Phi}^{\omega}$ defined in Section \ref{S: thermodynamic limit}.  We have already seen that the thermodynamic limit exists whenever we are dealing with cubes centered at the origin.  We now show that this still holds if we look at the blow-ups of any cube in $\mathbb{R}^{d}$.  

Since we are dealing with an uncountable family of cubes, we need to be wary of measurability issues.  In what follows, we start by analyzing cubes determined by points in $\mathbb{Q}^{d}$ and then recover the irrational ones using an approximation argument.

\begin{prop} \label{P: other_cubes 1}  There is an event $\hat{\Omega} \in \Sigma$ satisfying $\mathbb{P}(\hat{\Omega}) = 1$ such that if $\omega \in \hat{\Omega}$, $e \in S^{d-1} \cap \mathbb{R} \mathbb{Z}^{d}$, $x_{0} \in \mathbb{R}^{d}$, and $\rho > 0$, then 
\begin{equation*}
\tilde{\varphi}(e) \rho^{d - 1}= \lim_{R \to \infty} R^{1 - d} \tilde{\Phi}^{\omega}(e,Rx_{0},RQ^{e}(x_{0},\rho)).
\end{equation*} 
\end{prop}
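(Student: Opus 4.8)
The plan is to bootstrap from the centered-cube case, which Remark~\ref{R: silly remark} and Theorem~\ref{T: thermodynamic_limit} have already settled: for $\mathbb{P}$-a.e.\ $\omega$ one has $\lim_{R\to\infty}R^{1-d}\tilde\Phi^\omega(e,0,Q^e(0,R\rho))=\tilde\varphi(e)\rho^{d-1}$ for every $\rho>0$. So the real content is to remove the dependence on the base point $x_0$. Since $S^{d-1}\cap\mathbb{R}\mathbb{Z}^d$ is countable, I would fix a rational $e$ and produce a translationally-invariant, full-measure event $\hat\Omega_e$ on which the assertion holds for all $x_0\in\mathbb{Q}^d$ and $\rho\in\mathbb{Q}_{>0}$. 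Extending to arbitrary $\rho$ is routine: for $\rho'<\rho$ one has $Q^e(x_0,\rho')\subseteq Q^e(x_0,\rho)$, and filling the annular region between them with $T_{Rx_0}q_e$ and cutting off (with the estimate behind Proposition~\ref{P: basic_upper_bound}) pinches $R^{1-d}\tilde\Phi^\omega(e,Rx_0,RQ^e(x_0,\rho))$ between $\tilde\varphi(e)(\rho')^{d-1}$ and $\tilde\varphi(e)(\rho'')^{d-1}$ for all rationals $\rho'<\rho<\rho''$. The upgrade from rational to arbitrary $x_0$ and the translation-invariance of $\hat\Omega_e$ both follow once the estimate below is shown to be locally uniform in $x_0$; then $\hat\Omega:=\bigcap_{e}\hat\Omega_e\in\Sigma$.

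For the main estimate I would first invoke stationarity. The substitution $u\mapsto u(\,\cdot\,-Rx_0)$ together with $\varphi^{\tau_{Rx_0}\omega}(x,\cdot)=\varphi^\omega(x+Rx_0,\cdot)$ yields
\[ R^{1-d}\tilde\Phi^\omega(e,Rx_0,RQ^e(x_0,\rho))=\rho^{\,d-1}(R\rho)^{1-d}\,\tilde\varphi^{\tau_{Rx_0}\omega}\!\bigl(e,\,Q(0,R\rho),\,\tfrac{R\rho}{2}\bigr), \]
so it is enough to show $(R\rho)^{1-d}\tilde\varphi^{\tau_{Rx_0}\omega}(e,Q(0,R\rho),R\rho/2)\to\tilde\varphi(e)$. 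Writing $x_0=\tilde x_0+t_0e$ with $\tilde x_0\in\langle e\rangle^\perp$, the tangential shift $\tilde x_0$ is absorbed by an argument parallel to Propositions~\ref{P: important_properties}--\ref{P: finite horizon limit}: tile $Q(0,R\rho)$ by translates $Q(z_i,\ell)$ of a fixed cube, apply the sub-additivity \eqref{E: subadditive_property} and covariance~(iii) of Proposition~\ref{P: important_properties}, use the almost-monotonicity \eqref{E: almost_decreasing} to lower the horizon from $R\rho/2$ to a fixed $h$ at the price of $e(h)$, and then apply the multiparameter ergodic theorem from Appendix~\ref{A: ergodic theory} together with Theorem~\ref{T: thermodynamic_limit} to pin the resulting average at $\tilde\varphi(e)$; the gluing errors between tiles are handled by the fundamental estimate of Appendix~\ref{A: fundamental_estimate}, exactly as in the proof of Proposition~\ref{P: vertical_invariance}. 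The matching lower bound I would obtain by first passing to the infinite-horizon surface tension via \eqref{E: almost_decreasing} and Proposition~\ref{P: nice characterization} (this costs only $e(R\rho/2)=o(1)$ after renormalization) and then running the analogous sub-additive/ergodic argument for $\tilde\varphi^\omega_\infty$, whose structure is the cleaner one recorded around Proposition~\ref{P: ergodic_infinity}.

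The normal shift $t_0e$ is the main obstacle. The difficulty is that $\tau_{Rx_0}\omega$ runs through an $\omega$-dependent, $\mathbb{P}$-null orbit as $R\to\infty$, so the centered-cube limit cannot simply be quoted for the moving environment, and the base point of the ergodic average above drifts to infinity along $\mathbb{R}e$. The idea I would pursue is that the shift is invisible on the infinite cylinder: because $Q(0,R\rho)\oplus_e\mathbb{R}$ is translation-invariant in the $e$ direction, $\tilde\varphi^{\tau_{Rt_0e}\omega}_\infty(e,Q(0,R\rho))$ equals the energy of the \emph{same} cylinder problem for $\omega$ but with the transition of the boundary datum placed at height $Rt_0$ rather than $0$; one then slides that transition back to height $0$ by interpolating between $q_e$ and $T_{Rt_0e}q_e$ inside a tangential collar of width a fixed fraction of $R\rho$. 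Since $q-T_{Rt_0e}q$ is supported in a normal strip of length $O(R)$ while the tangential gradient of the interpolant is $O(1/R)$, the added energy is $O(R^{d-2})=o(R^{d-1})$, which reduces the normal shift to the tangential one already treated --- modulo the technical point of controlling the minimizer on the collar, which I expect to deal with by a further cut-off and a conditional-expectation argument with respect to $\Sigma_e$ in the spirit of Lemma~\ref{L: helpful_ergodic}. Carrying out this collar construction carefully, alongside the measurability bookkeeping that keeps $\hat\Omega$ in $\Sigma$, is the technical heart of the proof; the remainder is an assembly of the results of Sections~\ref{S: thermodynamic limit}--\ref{S: surface tension}.
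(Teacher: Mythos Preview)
Your reduction to rational $e$, $x_0$, $\rho$ and your treatment of the purely tangential shift $\tilde x_0\in\langle e\rangle^\perp$ are sound: once one writes $\tilde\Phi^\omega(e,R\tilde x_0,RQ^e(\tilde x_0,\rho))=\tilde\varphi^\omega(e,RQ(O_e^{-1}\tilde x_0,\rho),R\rho/2)$, the sub-additive ergodic theorem (applied to $\tilde\varphi^\omega(e,\cdot,h)$ and $\tilde\varphi^\omega_\infty(e,\cdot)$ on the off-centred cube $Q(O_e^{-1}\tilde x_0,\rho)$) together with the squeeze from \eqref{E: almost_decreasing} and Theorem~\ref{T: thermodynamic_limit} gives the limit.

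The gap is in the normal shift. Your energy estimate for the collar interpolant is wrong: you count only the tangential gradient and omit the potential $W$. For the linear interpolant $\psi q_e+(1-\psi)T_{Rt_0e}q_e$, at any tangential point of the collar with $\psi\in(\tfrac14,\tfrac34)$ the interpolant is bounded away from $\pm1$ throughout the normal strip $\{0<\langle x,e\rangle<Rt_0\}$ of length $\sim R$, so $\int W$ over the collar is of order $(\text{collar area})\cdot R\sim\delta R^{d-1}\cdot R$, not $O(R^{d-2})$. A ``tilted profile'' $w(x)=q(\langle x,e\rangle-\psi(x)Rt_0)$ cures the potential term, but now $|Dw|^2=|q'|^2\bigl(1+R^2t_0^2|D\psi|^2\bigr)\sim|q'|^2\bigl(1+t_0^2/(\delta\rho)^2\bigr)$, and the gradient energy integrates to $(\delta+C/\delta)R^{d-1}$; after renormalisation this error is $O(1)$, not $o(1)$, for every choice of $\delta$. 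This is not a technicality to be patched by a further cut-off or a conditional-expectation trick: moving the boundary transition a normal distance $Rt_0$ across a tangential span $R\rho$ produces a surface of area $\Theta(R^{d-1})$ and hence energy comparable to the main term.

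The paper sidesteps this obstacle by a different mechanism. Rather than sliding the boundary datum, it uses Egoroff's theorem to find a set $\Omega_{x_0,\rho}$ with $\mathbb{P}(\Omega_{x_0,\rho})>\tfrac12$ on which the centred-cube convergence $N^{1-d}\tilde\Phi^{\omega'}(e,0,Q^e(0,N\rho))\to\tilde\varphi(e)\rho^{d-1}$ is \emph{uniform} in $\omega'$, and then applies Birkhoff's ergodic theorem to the indicator process $y\mapsto 1_{\Omega_{x_0,\rho}}(\tau_y\omega)$ to guarantee that, for $\mathbb{P}$-a.e.\ $\omega$ and every large $R$, the small cube $RQ^e(x_0,\delta\rho)$ contains a point $y_R$ with $\tau_{y_R}\omega\in\Omega_{x_0,\rho}$. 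A minimizer for $\mathcal{F}^{\tau_{y_R}\omega}$ on a slightly shrunken centred cube, whose renormalised energy is close to $\tilde\varphi(e)\rho^{d-1}$ \emph{by uniformity}, is then glued via the fundamental estimate to $T_{x_0}q_e^\epsilon$ on $Q^e(x_0,\rho)$; the geometric errors are $O(\delta)+O(\alpha)$ and vanish in the limit. This ``find a nearby good environment'' device is what replaces the failed interpolation, and it also yields $\hat\Omega\in\Sigma$ for free, since the Birkhoff event lies in $\Sigma$.
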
  

\begin{proof}  We begin with the case when $x_{0} \in \mathbb{Q}^{d}$ and $\rho \in \mathbb{Q}$.  The remaining cubes are treated using an approximation argument we sketch at the end of the proof.  

First, recall from Remark \ref{R: silly remark} that $\lim_{R \to \infty} R^{1 - d} \tilde{\Phi}^{\omega}(e, 0, Q^{e}(0,R\rho)) = \tilde{\varphi}(e) \rho^{d - 1}$ almost surely.  Therefore, Egoroff's Theorem implies there is an event $\Omega_{x_{0},\rho} \in \mathscr{B}$ such that $\mathbb{P}(\Omega_{x_{0},\rho}) > \frac{1}{2}$ and 
\begin{equation*}
\lim_{N \to \infty} \sup \left\{ |N^{1 - d} \tilde{\Phi}^{\omega}(e, 0, Q^{e}(0,N\rho)) - \tilde{\varphi}(e) \rho^{d - 1}| \, \mid \,  \omega \in \Omega_{x_{0},\rho} \right\} = 0.
\end{equation*}
%In fact, it is not hard to show that $a \in \Omega_{T}$ implies $\tau_{x} a \in \Omega_{T}$ independently of the choice of $x \in \mathbb{R}^{d}$.  That is, $\Omega_{T} \in \Sigma$.  

Suppose $\delta \in \left(0,1\right) \cap \mathbb{Q}$.  By the ergodic theorem (see Theorem \ref{T: additive_ergodic_theorem} in Appendix \ref{A: ergodic theory}), there is an event $\hat{\Omega}_{\delta} \in \Sigma$ such that $\mathbb{P}(\hat{\Omega}_{\delta}) = 1$ and
\begin{equation*}
\lim_{R \to \infty} \frac{\mathcal{L}^{d}(\{y \in \mathbb{R}^{d} \, \mid \, \tau_{y} \omega \in \Omega_{x_{0},\rho}\} \cap R Q^{e}(x_{0},\delta \rho))}{R^{d}} > \frac{\delta^{d}\rho^{d}}{2} \quad \text{if} \, \, \omega \in \hat{\Omega}_{\delta}.
\end{equation*}
Let $\hat{\Omega}_{x_{0},\rho} = \bigcap_{\delta \in (0,1) \cap \mathbb{Q}} \hat{\Omega}_{\delta}$.  

We now show that 
\begin{equation} \label{E: tedium}  
\limsup_{R \to \infty} R^{1 - d} \tilde{\Phi}^{\omega}(e,Rx_{0},RQ^{e}(x_{0},\rho)) \leq \tilde{\varphi}(e) \rho^{d - 1} \quad \text{if} \, \, \omega \in \hat{\Omega}.
\end{equation}
We will use the fundamental estimate.  Henceforth assume $\omega \in \hat{\Omega}_{x_{0},\rho}$ is fixed and let $\delta \in \left(0,\frac{1}{2}\right) \cap \mathbb{Q}$ and $\alpha \in (0,\frac{1}{2})$ be free parameters.

In anticipation of our application of the estimate, we define the sets we will use.  These will depend on $\alpha$ and $\delta$, but we will omit this from the notation.  Let $U' = Q^{e}(x_{0},\rho)$.  By construction of $\hat{\Omega}_{x_{0},\rho}$, there is an $R_{0} > 0$ depending on $\omega$ such that if $R > R_{0}$, we can fix $y_{R} \in \mathbb{R}^{d}$ satisfying
\begin{equation*}
\tau_{y_{R}} \omega \in \Omega_{x_{0},\rho}, \quad y_{R} \in RQ^{e}(x_{0},\delta \rho)
\end{equation*} 
Observe that $\frac{y_{R}}{R} + (1-\delta) Q^{e}(0,\rho) \subseteq Q^{e}(x_{0},\rho)$.  Define families of sets $(\tilde{U}_{R},\tilde{V}_{R})_{R > R_{0}}$ by
\begin{equation*}
\tilde{U}_{R} = R^{-1}\left(y_{R} + \lfloor (1 - \delta - \alpha) R \rfloor Q^{e}(0,\rho)\right), \, \, \tilde{V}_{R} = U' \setminus \tilde{U}_{R}
\end{equation*}
As a matter of notational convenience, we define $\epsilon = R^{-1}$ and let $(U_{\epsilon},V_{\epsilon})_{\epsilon \in (0,R_{0}^{-1})}$ be given by $U_{\epsilon} = \tilde{U}_{R}$ and $V_{\epsilon} = \tilde{V}_{R}$.  Finally, define $E_{\delta} \subseteq \mathbb{R}^{d}$ by
\begin{equation*}
E_{\delta} = U' \cap \left\{x \in \mathbb{R}^{d} \, \mid \, |\langle x - x_{0}, e \rangle| < \frac{\delta \rho}{2} \right\}.
\end{equation*}
Notice that $\mathcal{L}^{d}(E_{\delta}) = \delta \rho^{d}$. 

Now we define the functions we will use.  Fix a minimizer $u_{R}$ of $\mathcal{F}^{\tau_{y_{R}}\omega}(\cdot; \lfloor (1 - \delta - \alpha) R \rfloor Q^{e}(0,\rho))$ with $u_{R} = q_{e}$ on the boundary.   Extend $u_{R}$ by letting it equal $q_{e}$ outside of $\lfloor (1-  \delta - \alpha)R \rfloor Q^{e}(0,\rho)$.  Finally, define $u_{\epsilon} \in H^{1}_{\text{loc}}(\mathbb{R}^{d};[-1,1])$ by 
\begin{equation*}
u_{\epsilon}(x) = u_{R} \left( \frac{x}{\epsilon} - y_{R} \right)
\end{equation*}
Observe that $\lim_{\epsilon \to 0^{+}} \|u_{\epsilon} - T_{x_{0}}q^{\epsilon}_{e}\|_{L^{1}(V_{\epsilon} \setminus E_{\delta})} = 0$.  Moreover, by construction,
\begin{align*}
\mathcal{F}^{\omega}_{\epsilon}(u_{\epsilon}; U_{\epsilon}) &= R^{1 - d} \mathcal{F}^{\tau_{y_{R}}\omega}(u_{R};  \lfloor (1 - \delta - \alpha) R \rfloor Q^{e}(0,\rho)) \\
	&= R^{1 - d} \tilde{\Phi}^{\tau_{y_{R}}\omega}(e,0, \lfloor (1 - \delta - \alpha) R \rfloor Q^{e}(0,\rho)).
\end{align*}

By the fundamental estimate, there is a family of cut-off functions $(\psi_{\epsilon})_{\epsilon \in (0,R_{0}^{-1})} \subseteq C_{c}^{\infty}(U'; [0,1])$ satisfying $\psi_{\epsilon} \equiv 1$ in $U_{\epsilon}$ and a constant $C_{\alpha} >0$ depending only on $W$, $\lambda$, $\Lambda$, and $\alpha$ such that as $\epsilon \to 0^{+}$, the following asymptotic holds:
\begin{equation*}
\mathcal{F}^{\omega}_{\epsilon}(\psi_{\epsilon} u_{\epsilon} + (1 - \psi_{\epsilon})T_{x_{0}}q^{\epsilon}_{e}; U_{\epsilon} \cup V_{\epsilon}) \leq \mathcal{F}^{\omega}_{\epsilon}(u_{\epsilon}; U') + \mathcal{F}^{\omega}_{\epsilon}(T_{x_{0}}q^{\epsilon}_{e}; V_{\epsilon}) + C_{\alpha} \mathcal{L}^{d}(E_{\delta}) + o(1)
\end{equation*}
Since $U_{\epsilon} \cup V_{\epsilon} = U' = Q^{e}(x_{0},\rho)$, we find
\begin{equation} \label{E: horrible}
\limsup_{R \to \infty} R^{1 - d} \tilde{\Phi}^{\omega}(e,Rx_{0}, RQ^{e}(x_{0},\rho)) \leq \limsup_{\epsilon \to 0^{+}} \left( \mathcal{F}^{\omega}_{\epsilon}(u_{\epsilon}; U') + \mathcal{F}^{\omega}_{\epsilon}(T_{x_{0}}q^{\epsilon}_{e}; V_{\epsilon}) \right) + C_{\alpha} \mathcal{L}^{d}(E_{\delta}) 
\end{equation}
Taking into account the geometry of $U' \setminus U_{\epsilon} = V_{\epsilon}$, we see that 
\begin{align*}
\limsup_{\epsilon \to 0^{+}} \mathcal{F}^{\omega}_{\epsilon}(u_{\epsilon}; U') &\leq  \limsup_{R \to \infty} R^{1 - d} \tilde{\Phi}^{\tau_{y_{R}}\omega}(e,0,\lfloor (1 - \delta) R \rfloor \hat{T}) + \gamma (\alpha,\delta) \\
\limsup_{\epsilon \to 0^{+}} \mathcal{F}^{\omega}_{\epsilon}(T_{x_{0}}q_{e}^{\epsilon} ; V_{\epsilon}) &\leq \gamma(\alpha,\delta),
\end{align*}
where $\gamma(\alpha,\delta)$ is given by
\begin{equation*}
\gamma(\alpha,\delta) = 2 (d - 1) C_{\Lambda} (2 \delta + \alpha) \rho^{d - 1}.
\end{equation*}  
Moreover, since $\tau_{y_{R}}\omega \in \Omega_{x_{0},\rho}$ for all $R > R_{0}$, the following identity holds:
\begin{align*}
\limsup_{R \to \infty} R^{1 - d} \tilde{\Phi}^{\tau_{y_{R}}\omega}(e,0,\lfloor (1 - \delta - \alpha) R \rfloor Q^{e}(0,\rho)) &= (1 - \delta - \alpha)^{d - 1} \tilde{\varphi}(e) \rho^{d - 1}.
\end{align*}
Thus, \eqref{E: tedium} follows after sending first $\delta \to 0^{+}$ and then $\alpha \to 0^{+}$ in \eqref{E: horrible}.  

The proof of the lower bound is obtained similarly: instead of approximating $T$ from inside by smaller cubes, we approximate it from the outside.  Thus, the statement of the proposition holds with $\hat{\Omega} = \bigcap_{x_{0} \in \mathbb{Q}^{d}} \bigcap_{\rho \in \mathbb{Q}_{+}} \hat{\Omega}_{x_{0},\rho}$, at least for cubes with center in $\mathbb{Q}^{d}$ and side length in $\mathbb{Q}_{+}$.  

If $x_{0} \in \mathbb{Q}^{d}$, but $\rho \notin \mathbb{Q}$, then $Q^{e}(x_{0},\rho)$ is well-approximated by cubes of the previous type.  In the previous arguments, if we replace $Q^{e}(0,\rho)$ by $Q^{e}(0,\zeta)$ for some $\zeta \in \mathbb{Q} \cap (0,\rho)$, then the same arguments show the the following asymptotic holds as $\zeta \to \rho$:
\begin{equation*}
\limsup_{R \to \infty} R^{1 - d} \tilde{\Phi}^{\omega}(e,Rx_{0},RQ^{e}(0,\rho)) \leq \tilde{\varphi}(e) \zeta^{d- 1} + o(1)
\end{equation*} 
Since $\zeta \in \mathbb{Q}$, we can do all of this without leaving the event $\hat{\Omega}$.  The same approach can be used to prove the lower bound.  

When $x_{0} \notin \mathbb{Q}^{d}$, we approximate $Q^{e}(x_{0},\rho)$ by $Q^{e}(x',\zeta)$, where $x' \in \mathbb{Q}^{d}$ and $\zeta \in \mathbb{Q}$.  As long as $\omega \in \hat{\Omega}$, we can use the fundamental estimate to show that if $Q^{e}(x',\zeta)$ is a large enough subset of $Q^{e}(x_{0},\rho)$, then minimizers in $Q^{e}(x',\zeta)$ can be interpolated to approximate minimizers in $Q^{e}(x_{0},\rho)$.  This gives the upper bound.  The lower bound is obtained by taking $Q^{e}(x',\zeta)$ to be a small enough superset.   
\end{proof}

Next, we will consider the case when $e \in S^{d - 1} \setminus \mathbb{R} \mathbb{Z}^{d}$.  Before doing so, it is helpful to know that the choice of $\{O_{e}\}_{e \in S^{d -1}}$ (see Section \ref{S: assumptions/notations}) was superfluous:

\begin{lemma} \label{L: orientation}  Let $\hat{\Omega}$ be the event obtained in Proposition \ref{P: other_cubes 1}.  If $O_{e}^{*}: \mathbb{R}^{d - 1} \to \langle e \rangle^{\perp}$ is a different choice of isometry, and if we denote by $\{Q^{e}_{*}(x,\rho) \, \mid \, x \in \mathbb{R}^{d}, \, \, \rho > 0\}$ the cubes obtained by replacing $O_{e}$ by $O_{e}^{*}$, then, for each $\omega \in \hat{\Omega}$, $x_{0} \in \mathbb{R}^{d}$, and $\rho > 0$,
	\begin{equation*}
		\tilde{\varphi}(e) \rho^{d - 1} = \lim_{R \to \infty} R^{1 - d} \Phi^{\omega}(e, Rx_{0}, RQ^{e}_{*}(x_{0},\rho)).
	\end{equation*}
\end{lemma}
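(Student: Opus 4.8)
The plan is to reduce the statement to Proposition \ref{P: other_cubes 1} by observing that two choices of isometry $O_e, O_e^*: \mathbb{R}^{d-1} \to \langle e \rangle^\perp$ differ by a rotation of $\langle e \rangle^\perp$, so that the cube $Q^e_*(x_0,\rho)$ is simply a rotated copy of $Q^e(x_0,\rho)$ about the axis through $x_0$ in the direction $e$. Since the perimeter functional $\mathscr{E}$ and the surface tension $\tilde{\varphi}$ depend only on the normal direction $e$ (not on any orientation of $\langle e \rangle^\perp$), the only thing that could go wrong is that the random environment $\varphi^\omega$ is not rotationally symmetric. The key point is therefore to exploit the freedom in the almost-sure event: rather than comparing $\tilde{\Phi}^\omega$ on $RQ^e(x_0,\rho)$ with $\tilde{\Phi}^\omega$ on the rotated cube $RQ^e_*(x_0,\rho)$ for the same $\omega$, I would compare them using a translation to a favorable point of the environment, exactly as in the proof of Proposition \ref{P: other_cubes 1}.

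First, I would note that $Q^e_*(x_0,\rho)$ and $Q^e(x_0,\rho)$ are both axis-parallel (with respect to their own bases) cubes in $\mathbb{R}^d$ with the same center $x_0$, the same side length $\rho$, and the same pair of faces orthogonal to $e$; they are related by a rotation $\mathcal{R}$ fixing the line $x_0 + \langle e \rangle$. In particular, for any $\eta \in (0,1)$ one can inscribe in $Q^e_*(x_0,\rho)$ a translate $y + Q^e(0,(1-\eta)\rho)$ of a smaller standard cube (with center $y$ in the hyperplane $\{\langle x - x_0, e\rangle$ within $\pm \eta\rho/2\}$, say $y$ of the form $x_0 + O_e(\tilde y)$), and conversely $Q^e_*(x_0,\rho)$ can be inscribed in a slightly larger standard cube. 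Using the ergodic theorem exactly as in the proof of Proposition \ref{P: other_cubes 1} — that is, the fact that on a full-probability event the set $\{z : \tau_z \omega \in \Omega_{x_0,\rho}\}$ has positive density in every large cube — I can find, for $R$ large, a shift $y_R$ with $\tau_{y_R}\omega$ in the relevant Egoroff event and $y_R \in RQ^e_*(x_0,\eta\rho)$ so that $y_R + \lfloor(1-\eta)R\rfloor Q^e(0,\rho) \subseteq RQ^e_*(x_0,\rho)$. Then I glue a rescaled minimizer of $\mathcal{F}^{\tau_{y_R}\omega}(\cdot; \lfloor(1-\eta)R\rfloor Q^e(0,\rho))$ with the boundary datum $q_e$ to the function $T_{Rx_0}q_e$ on the collar, using the fundamental estimate of Appendix \ref{A: fundamental_estimate}. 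Because $\tau_{y_R}\omega \in \Omega_{x_0,\rho}$, the energy of the inner piece is asymptotically $(1-\eta)^{d-1}\tilde{\varphi}(e)\rho^{d-1}$, and the collar contributes an error controlled by $C_\Lambda$ times the $(d-1)$-measure of the collar cross-section, which is $O(\eta)\rho^{d-1}$. Sending $R \to \infty$ and then $\eta \to 0^+$ yields $\limsup_{R \to \infty} R^{1-d}\tilde{\Phi}^\omega(e,Rx_0,RQ^e_*(x_0,\rho)) \leq \tilde{\varphi}(e)\rho^{d-1}$.

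For the matching lower bound I would reverse the roles: approximate $RQ^e_*(x_0,\rho)$ from the outside by a slightly larger standard cube $y_R + \lceil(1+\eta)R\rceil Q^e(0,\rho)$ centered at a good environment shift, take a minimizer $v$ attaining $\tilde{\Phi}^\omega(e,Rx_0,RQ^e_*(x_0,\rho))$, extend it by $T_{Rx_0}q_e$ outside, glue it by the fundamental estimate to the boundary datum on the larger standard cube, and compare with $\tilde{\Phi}^{\tau_{y_R}\omega}(e,0,\lceil(1+\eta)R\rceil Q^e(0,\rho)) \sim (1+\eta)^{d-1}\tilde{\varphi}(e)\rho^{d-1}$. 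This forces $\liminf_{R\to\infty} R^{1-d}\tilde{\Phi}^\omega(e,Rx_0,RQ^e_*(x_0,\rho)) \geq \tilde{\varphi}(e)\rho^{d-1}$ after $\eta \to 0^+$. Since the whole argument only ever uses the standard cubes $Q^e(\cdot,\cdot)$ together with translations and the fundamental estimate — all of which are valid for every $\omega$ in the event $\hat{\Omega}$ of Proposition \ref{P: other_cubes 1} (enlarged if necessary by a further set of full probability coming from finitely many more applications of the ergodic theorem and Egoroff's theorem, which can be absorbed into $\hat{\Omega}$) — the conclusion holds for all $\omega \in \hat{\Omega}$, all $x_0 \in \mathbb{R}^d$, and all $\rho > 0$.

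The main obstacle I anticipate is the measurability bookkeeping: the rotated cubes $Q^e_*$ form an uncountable family and the Egoroff event $\Omega_{x_0,\rho}$ was built for the standard cubes, so care is needed to ensure that the shifts $y_R$ and all the approximating quantities can be chosen within one fixed $\Sigma$-measurable full-probability event not depending on $x_0$, $\rho$, or the choice of $O_e^*$. This is handled exactly as at the end of the proof of Proposition \ref{P: other_cubes 1}: first establish the claim for $x_0 \in \mathbb{Q}^d$, $\rho \in \mathbb{Q}_+$ using the already-constructed countable intersection $\hat{\Omega}$, and then pass to general $x_0$ and $\rho$ by the same inner/outer cube approximation, which stays inside $\hat{\Omega}$ because all auxiliary cubes can be taken rational. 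Everything else is a routine repetition of the gluing-plus-ergodic-theorem scheme already carried out above.
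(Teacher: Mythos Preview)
Your argument has a genuine geometric gap. You claim that for any $\eta \in (0,1)$ one can inscribe in $Q^{e}_{*}(x_{0},\rho)$ a single translated standard cube $y + Q^{e}(0,(1-\eta)\rho)$, and that the leftover collar has $(d-1)$-dimensional cross-sectional measure $O(\eta)\rho^{d-1}$. This is false when the rotation relating $O_{e}$ and $O_{e}^{*}$ is not close to the identity. In the $(d-1)$-dimensional cross-section, a cube of side $\rho$ rotated by, say, $\pi/4$ only contains an axis-aligned cube of side $\rho/\sqrt{2}$ (and in higher dimensions the gap is worse). Consequently the ``collar'' has measure bounded below by a fixed positive fraction of $\rho^{d-1}$, independent of $\eta$, and your limiting step $\eta \to 0^{+}$ does not recover $\tilde{\varphi}(e)\rho^{d-1}$. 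The same obstruction ruins the outer approximation in your lower bound.

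The paper avoids this by a different mechanism: it does \emph{not} try to squeeze a single standard cube into the rotated one. Instead it tiles the cross-section $Q' = (O_{e}^{*})^{-1}(Q^{e}_{*}(x_{0},\rho)) \subseteq \mathbb{R}^{d-1}$ by a fine grid of small cubes of side $\delta$ oriented according to the basis $v_{i} = (O_{e}^{*})^{-1}(O_{e}(e_{i}))$; each such small cube corresponds, under $O_{e}^{*}$, to a standard cube $Q^{e}(y_{A},\delta)$. Since $Q'$ is Jordan measurable, the inner and outer grids miss only measure $<\zeta$ for $\delta$ small. Proposition~\ref{P: other_cubes 1} already gives the limit $\tilde{\varphi}(e)\delta^{d-1}$ for every standard cube $Q^{e}(y_{A},\delta)$ with arbitrary center, so summing over the grid (using sub-additivity and the almost-decreasing property in $h$) yields both bounds after $\zeta \to 0$. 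This Riemann-sum argument is purely deterministic once $\omega \in \hat{\Omega}$; in particular there is no need to re-invoke Egoroff or the ergodic theorem, and no measurability bookkeeping of the kind you worry about arises.
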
  

	\begin{proof}  Let $Q' = (O_{e}^{*})^{-1}(Q^{e}_{*}((x_{0},\rho))$ and set $t = \langle x_{0},e \rangle$.  The idea is to approximate $Q'$ using cubes oriented according to $(O_{e}^{*})^{-1} \circ O_{e}$ as in Riemannian integration.  It will then be easy to use basic estimates and Proposition \ref{P: other_cubes 1} to conclude.
	
	Recall that we have fixed the orthonormal basis $\{e_{1},\dots,e_{d - 1}\}$ of $\mathbb{R}^{d - 1}$ in Section \ref{S: cubes}.  Note that, by the definition of $O_{e}^{*}$ and $Q^{e}_{*}(x_{0},\rho)$, $Q'$ is a cube oriented according to this basis.  Define another orthonormal basis $\{v_{1},\dots,v_{d - 1}\}$ of $\mathbb{R}^{d-1}$ according to $v_{i} = (O_{e}^{*})^{-1}(O_{e}(e_{i}))$.  Notice that if $y \in \mathbb{R}^{d}$ and $\rho' > 0$, then $(O_{e}^{*})^{-1}(Q^{e}(y,\rho'))$ is a cube in $\mathbb{R}^{d - 1}$ oriented according $\{v_{1},\dots,v_{d - 1}\}$.
	
	Before proceeding further, it will be helpful to define the set operation $\oplus_{e}^{*}$ analogously to $\oplus_{e}$ (see Section \ref{S: euclidean}) but with $O_{e}^{*}$ replacing $O_{e}$.   
	
\textbf{Step 1:} upper bound
	
	Let $Q$ denote the unit cube in $\mathbb{R}^{d-1}$ oriented according to $\{v_{1},\dots,v_{d - 1}\}$.  For each $\delta > 0$, define the grid $\mathcal{J}^{-}_{\delta}$ by
		\begin{equation*}
			\mathcal{J}^{-}_{\delta} = \{w + \delta Q \, \mid \, \delta^{-1} w \in \text{span}_{\mathbb{Z}}\{v_{1},\dots,v_{d - 1}\}, \, \, w + \delta Q \subseteq Q'\}.
		\end{equation*}
and its trace $\mathscr{T}^{-}_{\delta} = \bigcup_{A \in \mathcal{J}^{-}_{\delta}} A$.  
Recall that since $Q'$ is a cube, it is Jordan measurable.  In particular, given $\zeta > 0$, we can fix a $\delta > 0$ such that $\mathcal{L}^{d - 1}(Q' \setminus \mathscr{T}^{-}_{\delta}) < \zeta$.    

	Notice that if $R, h > 0$ and $u \in H^{1}( R \mathscr{T}^{-}_{\delta} \oplus_{e}^{*} (R(t - \rho/2),R(t + \rho/2)) ; [-1,1])$ and $u = T_{x_{0}}q_{e}$ on the boundary, then the function $\tilde{u} : RQ^{*}_{e}(x_{0},\rho) \to [-1,1]$ defined by 
		\begin{equation*}
			\tilde{u}(x) = \left\{ \begin{array}{r l}
							u(x), & x \in R \mathscr{T}^{-}_{\delta} \oplus_{e}^{*} (-R\rho/2,R\rho/2) \\
							T_{x_{0}}q_{e}(x), & \text{otherwise}
						\end{array} \right.
		\end{equation*}
is in $H^{1}(RQ^{*}_{e}(x_{0},\rho))$, it equals $T_{x_{0}}q_{e}$ on the boundary, and, arguing as in Proposition \ref{P: basic_upper_bound}, we find
	\begin{align*}
		\mathcal{F}^{\omega}(\tilde{u}; RQ^{*}_{e}(x_{0},\rho)) \leq \mathcal{F}^{\omega}(u; R \mathscr{T}^{-}_{\delta} \oplus_{e}^{*} (R(t-\rho/2),R(r +\rho/2)) + C_{\Lambda} \zeta.  
	\end{align*}  
From this, we deduce that
	\begin{align*}
		\Phi^{\omega}(e,Rx_{0},Q^{e}_{*}(x_{0},\rho)) &\leq \Phi^{\omega}(e, Rx_{0},R \mathscr{T}^{-}_{\delta} \oplus_{e}^{*} (R(t -\rho/2),R(t +\rho/2)) + C_{\Lambda}\zeta \\
			&\leq \sum_{A \subseteq \mathcal{J}^{-}_{\delta}} \Phi^{\omega}(e, Rx_{0}, RA \oplus_{e}^{*} (R(t-\rho/2),R(t+\rho/2)) + C_{\Lambda} \zeta.
	\end{align*}
Notice that, by construction, there are points $\{y_{A} \, \mid \, A \in \mathcal{J}^{-}_{\delta}\}$ with $\langle y_{A}, e \rangle = t$ such that $A \oplus_{e}^{*} (t -\delta/2,t +\delta/2) = Q^{e}(y_{A}, \delta)$.  Moreover, since $\delta < \rho$, the inequality \eqref{E: almost_decreasing} implies
	\begin{align*}
		&\limsup_{R \to \infty} R^{1-d} \Phi^{\omega}(e,Rx_{0}, RA \oplus_{e}^{*} (R(t-\rho/2),R(t+\rho/2)) \\
		&\qquad \leq \limsup_{R \to \infty} R^{1-d} \Phi^{\omega}(e,Rx_{0}, R Q^{e}(y_{A})).
	\end{align*}
Substituting this into the previous estimates, dividing by $R^{d - 1}$, and sending $R \to \infty$, we find
	\begin{align*}
		\limsup_{R \to \infty} R^{1- d} \Phi^{\omega}(e,Rx_{0},Q^{e}_{*}(x_{0},\rho)) &\leq \sum_{A \subseteq \mathcal{J}^{-}_{\delta}} \limsup_{R \to \infty} R^{1 - d} \Phi^{\omega}(e,Ry_{A},RQ^{e}(y_{A},\delta)) + C_{\Lambda} \zeta \\
			&= \tilde{\varphi}(e) \mathcal{L}^{d - 1}(\mathscr{T}^{-}_{\delta}) + C_{\Lambda} \zeta.
	\end{align*}
Since $\zeta > 0$ was arbitrary and $\lim_{\delta \to 0^{+}} \mathcal{L}^{d-1}(\mathscr{T}^{-}_{\delta}) = \rho^{d-1}$, this yields
	\begin{equation*}
		\limsup_{R \to \infty} R^{1 - d} \Phi^{\omega}(e,Rx_{0},RQ^{e}_{*}(x_{0},\rho)) \leq \tilde{\varphi}(e) \rho^{d - 1}.
	\end{equation*}
	
\textbf{Step 2:} interlude on disjoint unions of cubes

	In what follows, the following fact will be helpful.  Suppose $\{w_{1},\dots,w_{N}\} \subseteq \delta \text{span}_{\mathbb{Z}}\{v_{1},\dots,v_{d - 1}\}$, and let $J = \bigcup_{i = 1}^{N} (w_{i} + \delta Q)$.  We claim that if $t \in \mathbb{R}$ and $\omega \in \hat{\Omega}$, then
		\begin{equation*}
			\lim_{R \to \infty} R^{1 - d} \Phi^{\omega}(e,Rte, RJ \oplus_{e}^{*} (R(t-\delta/2),R(t+\delta/2))) = \tilde{\varphi}(e) \mathcal{L}^{d - 1}(J).
		\end{equation*}
		
	To see this, first pick $x_{1},\dots,x_{N} \in \mathbb{R}^{d}$ such that $\langle x_{i}, e \rangle = t$ and
		\begin{equation*}
			Q^{e}(x_{i},\delta) = (w_{i} + \delta Q) \oplus_{e}^{*} (t-\delta/2,t+\delta/2)
		\end{equation*}  
	Next, observe that, by sub-additivity,
		\begin{align*}
			\Phi^{\omega}(e, Rte, RJ \oplus_{e}^{*} (R(t -\delta/2),R(t + \delta/2))) &\leq \sum_{i = 1}^{N} \Phi^{\omega}(e,Rx_{i},Q^{e}(x_{i},\delta)) 
		\end{align*}
	Thus, dividing by $R^{d-1}$, sending $R \to \infty$, and invoking Proposition \ref{P: other_cubes 1}, we find
		\begin{align*}
			\limsup_{R \to \infty} R^{1 - d} \Phi^{\omega}(e, Rte, RJ \oplus_{e}^{*} (R(t -\delta/2),R(t + \delta/2)))
				&\leq \tilde{\varphi}(e) N \delta^{d - 1}  \\
				&= \tilde{\varphi}(e) \mathcal{L}^{d- 1}(J).
		\end{align*}
		
	Next, let $\mathcal{J} = \{w + \delta Q \, \mid \, w \in \delta \text{span}_{\mathbb{Z}}\{v_{1},\dots,v_{d-1}\} \cap [-M,M)^{d}\}$ and let $\overline{Q}$ be the cube in $\mathbb{R}^{d-1}$ given by 
		\begin{equation*}
			\overline{Q} = \bigcup_{w \in \mathcal{J}} (w + \delta Q)
		\end{equation*}
	where $M \in \mathbb{N}$ is chosen so that $J \subseteq \overline{Q}$.  Let $\mathcal{J}_{1} = \{Q_{0} \in \mathcal{J} \, \mid \, Q_{0} \subseteq J\}$ denote the sub-set of $\mathcal{J}$ consisting of cubes contained in $J$.  
	
	By sub-additivity and Proposition \ref{P: other_cubes 1}, we find
		\begin{align*}
			\tilde{\varphi}(e) \mathcal{L}^{d - 1}(\overline{Q}) &= \lim_{R \to \infty} R^{1 - d} \Phi^{\omega}(e, Rte, R\overline{Q} \oplus_{e}^{*} (R(t - M),R(t + M))) \\
			&\leq \liminf_{R \to \infty} R^{1-d} \Phi^{\omega}(e,Rte, RJ \oplus_{e}^{*} (R(t - M), R(t + M))) \\
			&\quad + \sum_{Q \in \mathcal{J} \setminus \mathcal{J}_{1}} \limsup_{R \to \infty} R^{1 -d} \Phi^{\omega}(e,Rte, RQ \oplus_{e}^{*} (R(t -M),R(t + M))) \\
			&\leq \liminf_{R \to \infty} R^{1-d} \Phi^{\omega}(e,Rte,RJ \oplus_{e}^{*} (R(t -\delta/2),R(t + \delta/2)) \\
			&\quad + \sum_{Q \in \mathcal{J} \setminus \mathcal{J}_{1}} \lim_{R \to \infty} R^{1 - d} \Phi^{\omega}(e,Rte,RQ \oplus_{e}^{*}(R(t - \delta/2),R(t + \delta/2))) \\
			&=  \liminf_{R \to \infty} \Phi^{\omega}(e,Rte,RJ \oplus_{e}^{*} (R(t -\delta/2),R(t + \delta/2))  + \tilde{\varphi}(e) \sum_{Q \in \mathcal{J} \setminus \mathcal{J}_{1}} \delta^{d - 1}.
		\end{align*}
Thus, since $\sum_{Q \in \mathcal{J} \setminus \mathcal{J}_{1}} \delta^{d -1} = \mathcal{L}^{d -1}(\overline{Q} \setminus J)$ and $J \subseteq \overline{Q}$,
	\begin{equation*}
		\tilde{\varphi}(e) \mathcal{L}^{d -1}(J) \leq  \liminf_{R \to \infty} \Phi^{\omega}(e,Rte,RJ \oplus_{e}^{*} (R(t -\delta/2),R(t + \delta/2))).
	\end{equation*}
	
\textbf{Step 3:} lower bound

Finally, define $\mathcal{J}^{+}_{\delta}$ by
	\begin{equation*}
		\mathcal{J}^{+}_{\delta} = \{w + \delta Q \, \mid \, \delta^{-1} w \in \text{span}_{\mathbb{Z}}\{v_{1},\dots,v_{d - 1}\}, \, \, (w + \delta Q) \cap Q' \neq \phi\}
	\end{equation*}
and define its trace $\mathscr{T}^{+}_{\delta}$ by $\mathscr{T}^{+}_{\delta} = \bigcup_{A \in \mathcal{J}^{+}_{\delta}} A$.  As in Step 1, given $\zeta > 0$, we can fix $\delta > 0$ such that $\mathcal{L}^{d-1}(\mathscr{T}^{+}_{\delta} \setminus Q') < \zeta$.  To conclude, we will use Step 2 to estimate the energy in $R\mathscr{T}^{+}_{\delta}$ as $R \to \infty$ and then use an appropriate choice of configuration in $Q'$ to conclude.

For each $R > 0$, we can fix $u_{R} \in H^{1}(RQ_{e}^{*}(x_{0},\rho))$ such that $u = T_{x_{0}}q_{e}$ on the boundary and $\mathcal{F}^{\omega}(u_{R} ; RQ_{e}^{*}(x_{0},\rho)) = \Phi^{\omega}(e, Rx_{0}, RQ_{e}^{*}(x_{0},\rho))$.  Extend $u_{R}$ to $R\mathscr{T}^{+}_{\delta} \oplus_{e}^{*} (R(t - \rho),R(t + \rho))$ by letting $u_{R}(x) = T_{x_{0}}q_{e}(x)$ if $x \notin RQ_{e}^{*}(x_{0},\rho)$.  With this definition, $u_{R} \in H^{1}(R\mathscr{T}^{+}_{\delta} \oplus_{e} (R(t - \rho),R(t + \rho)))$, $u_{R} = T_{x_{0}}q_{e}$ on the boundary, and, thus,
	\begin{align*}
		\Phi^{\omega}(e, Rx_{0}, RQ_{e}^{*}(x_{0},\rho)) &= \mathcal{F}^{\omega}(u_{R} ; RQ_{e}^{*}(x_{0},\rho)) \\
								&\geq \mathcal{F}^{\omega}(u_{R}; R\mathscr{T}^{+}_{\delta} \oplus_{e}^{*} (R(t - \rho),R(t + \rho))) - C_{\Lambda} \mathcal{L}^{d-1}(\mathscr{T}^{+}_{\delta} \setminus Q') \\
								&\geq \Phi^{\omega}(e,Rte,R\mathscr{T}^{+}_{\delta} \oplus_{e}^{*} (R(t - \rho),R(t + \rho))) - C_{\Lambda} \zeta
	\end{align*}
Renormalizing, sending $R \to \infty$, and appealing to the result of Step 2, we find
	\begin{align*}
		\liminf_{R \to \infty} R^{1- d} \Phi^{\omega}(e, Rx_{0}, RQ_{e}^{*}(x_{0},\rho)) &\geq \tilde{\varphi}(e) \mathcal{L}^{d-1}(\mathscr{T}^{+}_{\delta}) - C_{\Lambda} \zeta
	\end{align*} 
In the limit $\zeta \to 0^{+}$, this yields
	\begin{equation*}
		\liminf_{R \to \infty} R^{1- d} \Phi^{\omega}(e, Rx_{0}, RQ_{e}^{*}(x_{0},\rho)) \geq \tilde{\varphi}(e) \mathcal{L}^{d-1}(Q')
	\end{equation*}

\end{proof}  

\begin{prop} \label{P: other_cubes 2}  Let $\hat{\Omega}$ be the event defined in Proposition \ref{P: other_cubes 1}.  If $\omega \in \hat{\Omega}$, $e \in S^{d -1}$, $x_{0} \in \mathbb{R}^{d}$, and $\rho > 0$, then
\begin{equation} \label{E: what_we_want}
\tilde{\varphi}(e) \rho^{d - 1}= \lim_{R \to \infty} R^{1 - d} \tilde{\Phi}^{\omega}(e,Rx_{0},RQ^{e}(0,\rho)).
\end{equation}
\end{prop}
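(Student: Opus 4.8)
The plan is to deduce the statement for arbitrary directions from the case of rational ones. By Proposition~\ref{P: other_cubes 1} together with Lemma~\ref{L: orientation}, the asserted identity already holds, for every $\omega\in\hat\Omega$, whenever the direction lies in $S^{d-1}\cap\mathbb{R}\mathbb{Z}^{d}$, for every admissible choice of the isometry $O_{e}$, and all $x_{0},\rho$. So fix an irrational $e\in S^{d-1}$, a point $x_{0}$, a scale $\rho>0$, and $\omega\in\hat\Omega$; everything from here on is deterministic. I would compare the problem defining $\tilde\Phi^{\omega}(e,Rx_{0},RQ^{e}(0,\rho))$ with the analogous problem for a \emph{fixed} rational direction $e'$ close to $e$, and only let $e'\to e$ at the end.

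For the upper bound: given a rational $e'$ with $|e-e'|$ small, partition a bounded neighbourhood of the interface $\{\langle x-x_{0},e\rangle=0\}$ inside $Q^{e}(0,\rho)$ into finitely many $e'$-oriented cubes $Q^{e'}(x_{0}^{(j)},\sigma)$ of a fixed small side $\sigma$, with $\langle x_{0}^{(j)},e\rangle=\langle x_{0},e\rangle$, whose number $N(\sigma)$ satisfies $N(\sigma)\sigma^{d-1}\to\rho^{d-1}$ as $\sigma\to0^{+}$. In the blow-up $RQ^{e}(0,\rho)$, place in each tile $RQ^{e'}(x_{0}^{(j)},\sigma)$ a minimizer for $\tilde\Phi^{\omega}(e',Rx_{0}^{(j)},RQ^{e'}(x_{0}^{(j)},\sigma))$ — whose renormalized energy tends to $\tilde\varphi(e')\sigma^{d-1}$ by Proposition~\ref{P: other_cubes 1}, since each tile has side $R\sigma\to\infty$ — fill the complement with the boundary datum $T_{Rx_{0}}q_{e}$, and glue everything with the fundamental estimate of Appendix~\ref{A: fundamental_estimate}. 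Three kinds of error appear: the tails of $q$ away from the interface, controlled by the function $e(\cdot)$ of \eqref{E: almost_decreasing} and vanishing as the slab is widened; the mismatch between the flat interfaces of adjacent tiles, and of the outermost tiles with the outer datum $T_{Rx_{0}}q_{e}$, which is confined to ``wedges'' between affine hyperplanes making an angle $O(|e-e'|)$ and therefore contributes only $O(\rho^{d-1}|e-e'|)\,R^{d-1}$ (this is the delicate point: the bridging surface must be laid essentially flat along those hyperplanes, not spread through the wedge volume); and the usual $o(R^{d-1})$ from the fundamental estimate. Renormalizing and sending first $R\to\infty$, then $\sigma\to0^{+}$, then the slab width to $\infty$, one obtains
$$ \limsup_{R\to\infty} R^{1-d}\tilde\Phi^{\omega}(e,Rx_{0},RQ^{e}(0,\rho)) \;\le\; \tilde\varphi(e')\,\rho^{d-1} + C\rho^{d-1}|e-e'|. $$
A symmetric argument — comparing a minimizer for the true direction $e$ with the same kind of $e'$-tiling, now enclosing the interface from the outside — yields
$$ \liminf_{R\to\infty} R^{1-d}\tilde\Phi^{\omega}(e,Rx_{0},RQ^{e}(0,\rho)) \;\ge\; \tilde\varphi(e')\,\rho^{d-1} - C\rho^{d-1}|e-e'|. $$

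Letting $e'\to e$ through rational directions, these two bounds force $\limsup_{e'\to e}\tilde\varphi(e')\le\liminf_{e'\to e}\tilde\varphi(e')$; hence $\tilde\varphi(e')$ converges (so $\tilde\varphi$ is, in passing, continuous at $e$) and $R^{1-d}\tilde\Phi^{\omega}(e,Rx_{0},RQ^{e}(0,\rho))$ converges, to that limit times $\rho^{d-1}$. To name the limit, take $x_{0}=0$ and $\rho=1$: it then equals $\lim_{R\to\infty}R^{1-d}\tilde\varphi^{\omega}(e,Q(0,R),R)$, whose lower limit is $\tilde\varphi(e)$ almost surely by \eqref{E: infinite volume} and Theorem~\ref{T: thermodynamic_limit}; being a deterministic number valid on all of $\hat\Omega$, it must coincide with $\tilde\varphi(e)$, and since $\lim_{e'\to e}\tilde\varphi(e')$ does not depend on $x_{0}$ or $\rho$, the identity \eqref{E: what_we_want} follows in general.

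The main obstacle is exactly the ``wedge'' between the interface hyperplane of the true direction $e$ and those of the approximating direction $e'$: because $e'$ is held fixed while $R\to\infty$, this region occupies an $O(|e-e'|)$ fraction of the cube, so one must bound its energetic contribution by $O(|e-e'|)$ rather than the naive $O(|e-e'|R)$ — i.e.\ realize the transition there by an almost-flat surface following the hyperplanes rather than by a bulk configuration filling the wedge. The same comparison is what delivers the continuity of $\tilde\varphi$. Alternatively, one could establish continuity of $\tilde\varphi$ in advance by a tent-competitor argument — bounding the flat datum $q_{e}$ by tent-shaped competitors with facets in rational directions and invoking Proposition~\ref{P: other_cubes 1} on those facets — after which the present scheme would only need to produce the limit itself.
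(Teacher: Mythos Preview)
Your proposal is correct in spirit and would ultimately work, but you are doing much more than necessary. The paper's proof avoids tiling altogether: instead of partitioning the interface into many small $e'$-oriented cubes, it exploits the elementary geometric fact that if $e_{n}\to e$ (and the isometries $O_{e_{n}}\to O_{e}$), then for any fixed $\alpha\in(0,1)$ one has $Q^{e_{n}}(0,\alpha)\subseteq Q^{e}(0,1)$ for $n$ large. So one simply places a \emph{single} minimizer for the $e_{n}$-problem in the nested cube $Q^{e_{n}}(0,\alpha)$, extends by $q_{e_{n}}^{\epsilon}$, and glues to $q_{e}^{\epsilon}$ via one application of the fundamental estimate. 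The wedge you correctly identify as the crux appears as the set $E_{n}=\{\langle x,e_{n}\rangle<0<\langle x,e\rangle\}\cup\{\langle x,e\rangle<0<\langle x,e_{n}\rangle\}$; its measure $\zeta_{n}$ in $V_{n,\alpha}$ tends to zero as $n\to\infty$ for each fixed $\alpha$, so the fundamental-estimate error $C_{\alpha}\zeta_{n}$ vanishes by sending $n\to\infty$ \emph{before} $\alpha\to 1^{-}$. Reversing roles gives the lower bound, and the identification of the limit with $\tilde\varphi(e)$ is exactly as you describe.

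Your tiling scheme introduces extra bookkeeping that the single-cube argument sidesteps: matching boundary data between adjacent tiles (their centers $x_{0}^{(j)}$ lie on the $e$-hyperplane, so the values $\langle x_{0}^{(j)},e'\rangle$ differ and the tile boundary data $T_{Rx_{0}^{(j)}}q_{e'}$ do not agree on shared faces), controlling the dependence of the fundamental-estimate constant on the tile scale $\sigma$, and an additional limit $\sigma\to 0^{+}$ in the hierarchy. None of this is needed. The simplification is conceptual: because both $Q^{e}$ and $Q^{e_{n}}$ are cubes of comparable size, one can nest rather than tile, and the surface-energy mismatch between a single pair of nearly parallel interfaces is automatically of order $\zeta_{n}\to 0$. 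Your alternative closing remark about proving continuity first via tent competitors is also unnecessary for the same reason---continuity falls out of the single-cube comparison, as the paper notes in the subsequent proposition.
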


\begin{proof}  To simplify the exposition, we will assume that $x_{0} = 0$ and $\rho = 1$.  The case $\rho \neq 1$ is no different, other than the additional factors of $\rho^{d - 1}$ that need to be tracked.  The same argument works with minor modifications when $x_{0} \neq 0$.

Fix $e \in S^{d - 1}$.  Choose $(e_{n})_{n \in \mathbb{N}} \subseteq S^{d-1} \cap \mathbb{R} \mathbb{Z}^{d}$ such that $\lim_{n \to \infty} e_{n} = e$.  Precomposing with an orthogonal transformation of $\mathbb{R}^{d-1}$ if necessary, we can assume that $O_{e_{n}} \to O_{e}$ in the operator norm as $n \to \infty$.  (Though this changes the orientation of the cubes we will use, Lemma \ref{L: orientation} shows this does not affect the validity of Proposition \ref{P: other_cubes 1}.)  Let $\alpha \in \left(0,1\right)$ and observe that if $n$ is sufficiently large, then 
\begin{equation} \label{E: geometric observation}
Q^{e_{n}}\left(0,\alpha \right) \subseteq Q^{e}(0,1)
\end{equation} 
In order to apply the fundamental estimate, we define a family of $\epsilon$-independent open sets as follows:
\begin{equation*}
U_{n,\alpha} = Q^{e_{n}} \left(0,\alpha \right), \quad V_{n,\alpha} = Q^{e}(0,1) \setminus U_{n,\alpha}, \quad U' = Q^{e}(0,1).
\end{equation*}

We now define the functions to be used in the estimate.  For each $n$, $\alpha$, and $\epsilon > 0$, let $u_{n,\alpha}^{\epsilon} \in H^{1}(U_{n,\alpha};[-1,1])$ be a minimizer of $\mathcal{F}^{\omega}_{\epsilon}(\cdot ; U_{n,\alpha})$ subject to $u_{n,\alpha}^{\epsilon} = q_{e_{n}}^{\epsilon}$ on $\partial U_{n,\alpha}$.  Extend $u_{n,\alpha}^{\epsilon}$ to $\mathbb{R}^{d}$ by setting $u_{n,\alpha}^{\epsilon} = q_{e_{n}}^{\epsilon}$ in $\mathbb{R}^{d} \setminus U_{n,\alpha}$.  

Observe that the family $(u_{n,\alpha}^{\epsilon})_{\epsilon > 0}$ satisfies $\lim_{\epsilon \to 0^{+}} \|u_{n,\alpha}^{\epsilon} - \chi_{e_{n}}\|_{L^{1}(V_{n,\alpha})} = 0$.  In particular, $\lim_{\epsilon \to 0^{+}} \|u_{n,\alpha}^{\epsilon} - q^{\epsilon}_{e}\|_{L^{1}(V_{n,\alpha} \setminus E_{n})} = 0$, where $E_{n}$ is given by
\begin{equation*}
E_{n} = \left\{ x \in \mathbb{R}^{d} \, \mid \, \langle x, e_{n} \rangle < 0 < \langle x, e \rangle\right\} \cup \left\{ x \in \mathbb{R}^{d} \, \mid \, \langle x, e \rangle < 0 < \langle x, e_{n} \rangle \right\}.
\end{equation*}
Let $\zeta_{n} = \mathcal{L}^{d}(V_{n,\alpha} \cap E_{n})$ and observe that $\lim_{n \to \infty} \zeta_{n} = 0$.  

We now apply the fundamental estimate: we fix a family $(\psi^{\epsilon}_{n,\alpha})_{\epsilon > 0} \subseteq C^{\infty}_{c}(U'; [0,1])$ satisfying $\psi^{\epsilon}_{n,\alpha} \equiv 1$ in $U_{n,\alpha}$ and a constant $C_{\alpha} > 0$ such that as $\epsilon \to 0^{+}$, the following asymptotic holds:
\begin{equation*}
\mathcal{F}^{\omega}_{\epsilon}(\psi^{\epsilon}_{n,\alpha} u_{n,\alpha}^{\epsilon} + (1 - \psi^{\epsilon}_{n,\alpha}) q^{\epsilon}_{e} ; V_{n,\alpha} \cup U_{n,\alpha}) \leq \mathcal{F}^{\omega}_{\epsilon}(u_{n,\alpha}^{\epsilon}; U_{n,\alpha}') + \mathcal{F}^{\omega}_{\epsilon}(q^{\epsilon}_{e}; V') + C_{\alpha} \zeta_{n} + o(1),
\end{equation*}
where $C_{\alpha}$ depends only on $\alpha$ and $W$.  Sending $\epsilon \to 0^{+}$ and appealing to \eqref{E: geometric observation}, we obtain
\begin{align} \label{E: key_continuity}
\limsup_{\epsilon \to 0^{+}} \epsilon^{d - 1} \tilde{\varphi}^{\omega}(e, Q(0,\epsilon^{-1}), \epsilon^{-1}) &\leq \lim_{\epsilon \to 0^{+}} \epsilon^{d - 1} \tilde{\varphi}^{\omega} \left(e_{n},Q \left(0,\alpha \epsilon^{-1}\right), \alpha \epsilon^{-1} \right) \\
	&\quad + \limsup_{\epsilon \to 0^{+}} \mathcal{F}^{\omega}_{\epsilon}(q_{e}^{\epsilon}; V_{n,\alpha}) + C_{\alpha} \zeta_{n}. \nonumber
\end{align}   

Observe that 
\begin{equation*}
\lim_{\alpha \to 1^{-}} \lim_{n \to \infty} \limsup_{\epsilon \to 0^{+}} \mathcal{F}^{\omega}_{\epsilon}(q^{\epsilon}_{e}; V_{n,\alpha}) = 0
\end{equation*}  
and
\begin{equation*}
\lim_{\alpha \to 1^{-}} \lim_{n \to \infty} C_{\alpha} \zeta_{n} = \lim_{\alpha \to 1^{-}} (C_{\alpha} \cdot 0) = 0.
\end{equation*}
Moreover, since $\omega \in \hat{\Omega}$,
\begin{equation} 
\lim_{\epsilon \to 0^{+}} \epsilon^{d - 1} \tilde{\varphi}^{\omega} \left(e_{n},Q\left(0,\alpha \epsilon^{-1} \right), \alpha \epsilon^{-1} \right) = \alpha^{d - 1} \tilde{\varphi}(e_{n})
\end{equation}
no matter the choice of $\alpha$ or $n$.
Thus, writing $R = \epsilon^{-1}$ in \eqref{E: key_continuity} and sending $n \to \infty$ first and then $\alpha \to 1^{-}$, we find
\begin{equation} \label{E: continuity_down}
\limsup_{R \to \infty} R^{1 - d} \tilde{\varphi}^{\omega}(e,Q(0,R),R) \leq \liminf_{n \to \infty} \tilde{\varphi}(e_{n}).
\end{equation}

Repeating the previous argument with the roles of $e_{n}$ and $e$ reversed, we obtain
\begin{equation} \label{E: continuity_up}
\limsup_{n \to \infty} \tilde{\varphi}(e_{n}) \leq \liminf_{R \to \infty} R^{1 - d} \tilde{\varphi}^{\omega}(e,Q(0,R),R) = \tilde{\varphi}^{\omega}(e).
\end{equation}
Combining \eqref{E: continuity_up} and \eqref{E: continuity_down}, we conclude 
\begin{equation} \label{E: continuity}
\liminf_{n \to \infty} \tilde{\varphi}(e_{n}) = \tilde{\varphi}^{\omega}(e) = \limsup_{R \to \infty} R^{1 - d} \tilde{\varphi}^{\omega}(e,Q(0,R),R) = \limsup_{n \to \infty} \tilde{\varphi}(e_{n}).
\end{equation}
Observing that $\mathbb{P}(\tilde{\Omega}_{e} \cap \hat{\Omega}) = 1$, we see that $\lim_{n \to \infty} \tilde{\varphi}(e_{n})$ exists and equals $\tilde{\varphi}(e)$.  Therefore, \eqref{E: continuity} implies $\tilde{\varphi}^{\omega}(e) = \tilde{\varphi}(e)$.  
Since $\tilde{\varphi}^{\omega}(e,Q(0,R),R) = \tilde{\Phi}^{\omega}(e,0,Q^{e}(0,R))$, the middle equality in \eqref{E: continuity} is exactly what we sought to prove.  
\end{proof} 

Rerunning the arguments of the previous proof, we find that $\tilde{\varphi} : S^{d -1} \to (0,\infty)$ is continuous:

\begin{prop} \label{P: continuity} $\tilde{\varphi}^{\omega} : S^{d -1} \to (0,\infty)$ is continuous.  \end{prop}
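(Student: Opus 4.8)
The plan is to rerun the fundamental-estimate comparison from the proof of Proposition \ref{P: other_cubes 2}, but now between two \emph{arbitrary} nearby directions rather than between a general direction and a rational one. The key point is that Proposition \ref{P: other_cubes 2} has already removed the need for rationality: on the full-probability event $\hat{\Omega} \in \Sigma$ of Proposition \ref{P: other_cubes 1}, we now know that for \emph{every} $e \in S^{d - 1}$ and every $\omega \in \hat{\Omega}$,
\begin{equation*}
\lim_{R \to \infty} R^{1 - d} \tilde{\varphi}^{\omega}(e, Q(0,R), R) = \tilde{\varphi}(e),
\end{equation*}
and in particular $\tilde{\varphi}^{\omega}(e) = \tilde{\varphi}(e)$ there. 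Hence it suffices to prove that the deterministic function $\tilde{\varphi} : S^{d - 1} \to (0,\infty)$ is sequentially continuous, and throughout we may fix a single $\omega \in \hat{\Omega}$.

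First I would fix $e \in S^{d - 1}$ and an arbitrary sequence $(e_{n})_{n \in \mathbb{N}} \subseteq S^{d - 1}$ with $e_{n} \to e$. Precomposing each $O_{e_{n}}$ with an orthogonal transformation of $\mathbb{R}^{d - 1}$ if necessary (which changes nothing by Lemma \ref{L: orientation}), I may assume $O_{e_{n}} \to O_{e}$ in operator norm, so that for each $\alpha \in (0,1)$ we have $Q^{e_{n}}(0,\alpha) \subseteq Q^{e}(0,1)$ once $n$ is large, exactly as in \eqref{E: geometric observation}. Then I would apply the fundamental estimate verbatim as in Proposition \ref{P: other_cubes 2}, with $U_{n,\alpha} = Q^{e_{n}}(0,\alpha)$, $V_{n,\alpha} = Q^{e}(0,1) \setminus U_{n,\alpha}$, $U' = Q^{e}(0,1)$, functions $u^{\epsilon}_{n,\alpha}$ minimizing $\mathcal{F}^{\omega}_{\epsilon}(\,\cdot\,; U_{n,\alpha})$ subject to $u^{\epsilon}_{n,\alpha} = q^{\epsilon}_{e_{n}}$ on $\partial U_{n,\alpha}$ (extended by $q^{\epsilon}_{e_{n}}$), and interpolation against $q^{\epsilon}_{e}$. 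Since $q^{\epsilon}_{e_{n}}$ and $q^{\epsilon}_{e}$ agree in $L^{1}(V_{n,\alpha})$ off the thin wedge $E_{n} = \{\langle x, e_{n}\rangle < 0 < \langle x, e\rangle\} \cup \{\langle x, e\rangle < 0 < \langle x, e_{n}\rangle\}$, whose intersection with $V_{n,\alpha}$ has volume $\zeta_{n} \to 0$, and using the displayed limit above to evaluate $\lim_{\epsilon \to 0^{+}} \epsilon^{d - 1}\tilde{\varphi}^{\omega}(e_{n}, Q(0,\alpha\epsilon^{-1}), \alpha\epsilon^{-1}) = \alpha^{d - 1}\tilde{\varphi}(e_{n})$ and $\lim_{\epsilon \to 0^{+}}\epsilon^{d - 1}\tilde{\varphi}^{\omega}(e, Q(0,\epsilon^{-1}),\epsilon^{-1}) = \tilde{\varphi}(e)$, the estimate yields, for each fixed $n$ and $\alpha$,
\begin{equation*}
\tilde{\varphi}(e) \leq \alpha^{d - 1}\tilde{\varphi}(e_{n}) + \limsup_{\epsilon \to 0^{+}} \mathcal{F}^{\omega}_{\epsilon}(q^{\epsilon}_{e}; V_{n,\alpha}) + C_{\alpha}\zeta_{n}.
\end{equation*}
Letting $n \to \infty$ and then $\alpha \to 1^{-}$, and using $\lim_{\alpha \to 1^{-}}\lim_{n \to \infty}\limsup_{\epsilon \to 0^{+}}\mathcal{F}^{\omega}_{\epsilon}(q^{\epsilon}_{e}; V_{n,\alpha}) = 0$, I obtain $\tilde{\varphi}(e) \leq \liminf_{n \to \infty}\tilde{\varphi}(e_{n})$.

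For the complementary inequality I would repeat the argument with the roles of $e$ and $e_{n}$ interchanged: take $U_{n,\alpha} = Q^{e}(0,\alpha)$ (which lies in $Q^{e_{n}}(0,1)$ once $n$ is large), $U' = Q^{e_{n}}(0,1)$, $V_{n,\alpha} = Q^{e_{n}}(0,1) \setminus Q^{e}(0,\alpha)$, put the minimizer on the $e$-cube with data $q^{\epsilon}_{e}$, interpolate against $q^{\epsilon}_{e_{n}}$, and conclude in the same way that $\limsup_{n \to \infty}\tilde{\varphi}(e_{n}) \leq \tilde{\varphi}(e)$. Combining the two inequalities gives $\lim_{n \to \infty}\tilde{\varphi}(e_{n}) = \tilde{\varphi}(e)$; since the sequence was arbitrary and $S^{d - 1}$ is metrizable, $\tilde{\varphi}$ — hence $\tilde{\varphi}^{\omega}$ on $\hat{\Omega}$ — is continuous. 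I do not expect any genuine obstacle here: the one point requiring care is the bookkeeping in the reversed-roles step (the cube inclusion, the vanishing of $\zeta_{n}$ and of $\mathcal{F}^{\omega}_{\epsilon}(q^{\epsilon}_{e_{n}}; V_{n,\alpha})$ as $n \to \infty$, $\alpha \to 1^{-}$), and this is identical in spirit to the computation already carried out in Proposition \ref{P: other_cubes 2}. The real content is simply the observation that, having established the thermodynamic limit for \emph{all} directions in Proposition \ref{P: other_cubes 2}, the rationality restriction is no longer needed and the fundamental estimate becomes available between any two nearby directions.
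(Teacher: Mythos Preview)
Your proposal is correct and follows essentially the same approach as the paper: the paper's proof simply says to rerun the argument of Proposition \ref{P: other_cubes 2}, noting that by that proposition the rationality restriction on $(e_{n})$ is no longer needed, and then reads off continuity from \eqref{E: continuity}. Your write-up unpacks exactly this, including the use of Lemma \ref{L: orientation} to align the isometries and the reversed-roles step, so there is nothing to add.
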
  

\begin{proof}  Assume $(e_{n})_{n \in \mathbb{N}} \subseteq S^{d - 1}$ and $\lim_{n \to \infty} e_{n} = e$.  We can repeat the arguments of the previous proof: by Proposition \ref{P: other_cubes 2}, the restriction that $(e_{n})_{n \in \mathbb{N}}$ be in $\mathbb{R} \mathbb{Z}^{d}$ is no longer necessary so the arguments go through.  In particular, \eqref{E: continuity} implies $\tilde{\varphi}(e) = \lim_{n \to \infty} \tilde{\varphi}(e_{n})$.  Since the approximating sequence and the point $e$ were arbitrary, $\tilde{\varphi}^{\omega}$ is continuous.  \end{proof}  

\begin{remark}  A direct argument in the spirit of Propositions \ref{P: other_cubes 2} and \ref{P: continuity} can be used to show that the one-homogeneous extension of $\tilde{\varphi}$ is convex.  Since this already follows from results we cite later, we will not present the proof.  However, the reader can find proofs along these lines in the work of Messenger, Miracle-Sole, and Ruiz on lattice systems \cite{miracle-sole} and the work of Caffarelli and de la Llave on interfaces in periodic media \cite{caffarelli de la llave}.  \end{remark}  

Finally, we observe that Theorem \ref{T: other_cubes} is proved:

\begin{proof}[Proof of Theorem \ref{T: other_cubes}]  Everything follows from Proposition \ref{P: other_cubes 2}.  \end{proof}    

\section{$\Gamma$-convergence} \label{S: geometry}

We now prove Theorem \ref{T: main result}.  As advertised in the introduction, we use the machinery already developed in \cite{periodic_paper}.  The key point is the existence of the limit in the definition of $\tilde{\varphi}$ implies $\Gamma$-convergence.

%In what follows, $\chi : \mathbb{R} \to [-1,1]$ is the function given by 
%\begin{equation*}
%\chi(s) = \left\{ \begin{array}{r l}
%				1, & s \geq 0 \\
%				-1, & s < 0
%			\end{array} \right.
%\end{equation*}
%Recall from Section \ref{S: assumptions/notations} that if $e \in S^{d -1}$ and $x_{0} \in \mathbb{R}^{d}$, then $\chi_{e} : \mathbb{R}^{d} \to [-1,1]$ is given by $\chi_{e}(x) = \chi(\langle x, e \rangle)$ and $T_{x_{0}}\chi_{e}(x) = \chi(\langle x - x_{0},e\rangle)$.  

In our proof of Theorem \ref{T: main result}, it will be helpful to know that $\Gamma$-convergence holds when $u = T_{x} \chi_{e}$ for some $x \in \mathbb{R}^{d}$ and $e \in S^{d -1}$.  This is a straightforward consequence of what we have already proved and the fundamental estimate.  In fact, this follows easily from \cite[Theorem 3.7]{periodic_paper}.  For the convenience of the reader, we state and prove it here.  The proof will be deferred until after that of Theorem \ref{T: main result}.  

\begin{prop} \label{P: planar_convergence} Suppose $e \in S^{d - 1}$, $x_{0} \in \mathbb{R}^{d - 1}$, $s_{0} \in \mathbb{R}$, $r, t > 0$, and $\omega \in \hat{\Omega}$.  If $A = Q(x_{0},r) \oplus_{e} (s_{0} - t, s_{0} + t))$, then 
\begin{align} \label{E: planar convergence}
			\tilde{\varphi}(e) r^{d - 1} &= \lim_{\delta \to 0^{+}} \liminf_{\epsilon \to 0^{+}} \inf \left\{ \mathcal{F}^{\omega}_{\epsilon}(u; A) \, \mid \, \|u - T_{s_{0} e}\chi_{e}\|_{L^{1}(A)} < \delta \right\} \\
				&=\lim_{\delta \to 0^{+}} \limsup_{\epsilon \to 0^{+}} \inf \left\{ \mathcal{F}^{\omega}_{\epsilon}(u; A) \, \mid \, \|u - T_{s_{0} e}\chi_{e}\|_{L^{1}(A)} < \delta \right\} \nonumber
\end{align}
\end{prop}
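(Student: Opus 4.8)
The plan is to establish the two inequalities
\[
\lim_{\delta \to 0^{+}} \limsup_{\epsilon \to 0^{+}} M_{\epsilon}(\delta) \;\leq\; \tilde{\varphi}(e)\, r^{d - 1} \;\leq\; \lim_{\delta \to 0^{+}} \liminf_{\epsilon \to 0^{+}} M_{\epsilon}(\delta),
\]
where $M_{\epsilon}(\delta) = \inf\{ \mathcal{F}^{\omega}_{\epsilon}(u; A) \mid u \in H^{1}(A;[-1,1]),\ \|u - T_{s_{0}e}\chi_{e}\|_{L^{1}(A)} < \delta\}$. Since $\delta \mapsto M_{\epsilon}(\delta)$ is non-increasing, both limits in $\delta$ exist in $[0,\infty]$, and as $\liminf_{\epsilon} M_{\epsilon}(\delta) \leq \limsup_{\epsilon} M_{\epsilon}(\delta)$ for every $\delta$, the two displayed inequalities force all four quantities in \eqref{E: planar convergence} to coincide with $\tilde{\varphi}(e) r^{d - 1} = \mathscr{E}(T_{s_{0}e}\chi_{e};A)$ (the interface $\{T_{s_{0}e}\chi_{e}=1\}\cap A$ having $\mathcal{H}^{d-1}$-measure $r^{d-1}$ and normal $\pm e$). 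Throughout we fix $\omega \in \hat{\Omega}$, write $R = \epsilon^{-1}$, and note that the only nontrivial input is Theorem~\ref{T: other_cubes}, applied to genuine cubes $Q^{e}(\cdot,\cdot)$; the whole difficulty is to reconcile the box $A = Q(x_{0},r)\oplus_{e}(s_{0}-t,s_{0}+t)$ with these cubes, which we do by tiling the cross-section $Q(x_{0},r)$ and absorbing the $e$-direction discrepancy with the tail $e(h) \to 0$ of the profile $q$.

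\emph{Upper bound.} Given $\delta > 0$, choose an integer $k$ with $\sigma := r/k$ so small that $2 r^{d-1}\sigma < \delta$ and $\sigma/2 < t$, and partition $Q(x_{0},r)$ into translates $Q(y_{1},\sigma),\dots,Q(y_{k^{d-1}},\sigma)$; put $\xi_{i} = O_{e}(y_{i}) + s_{0}e \in \mathbb{R}^{d}$, and let $v^{i}_{R}$ realize $\tilde{\Phi}^{\omega}(e, R\xi_{i}, R Q^{e}(\xi_{i},\sigma))$. Define $u_{\epsilon}$ on $A$ by $u_{\epsilon}(x) = v^{i}_{R}(x/\epsilon)$ on $Q^{e}(\xi_{i},\sigma)$ and $u_{\epsilon}(x) = T_{s_{0}e}q^{\epsilon}_{e}(x)$ on $A \setminus \bigcup_{i} Q^{e}(\xi_{i},\sigma)$. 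Because $v^{i}_{R} = T_{R\xi_{i}}q_{e}$ on $\partial(RQ^{e}(\xi_{i},\sigma))$ and $(T_{R\xi_{i}}q_{e})(\cdot/\epsilon) = T_{s_{0}e}q^{\epsilon}_{e}$, the function $u_{\epsilon} - T_{s_{0}e}q^{\epsilon}_{e}$ is a sum of $H^{1}_{0}$-extensions, so $u_{\epsilon} \in H^{1}(A;[-1,1])$. By a change of variables and Theorem~\ref{T: other_cubes}, $\sum_{i}\mathcal{F}^{\omega}_{\epsilon}(u_{\epsilon};Q^{e}(\xi_{i},\sigma)) = \sum_{i} R^{1-d}\tilde{\Phi}^{\omega}(e,R\xi_{i},RQ^{e}(\xi_{i},\sigma)) \to k^{d-1}\tilde{\varphi}(e)\sigma^{d-1} = \tilde{\varphi}(e) r^{d-1}$, while Fubini's theorem (as $q_{e}$ varies only in the $e$-direction) bounds $\mathcal{F}^{\omega}_{\epsilon}(T_{s_{0}e}q^{\epsilon}_{e}; A\setminus\bigcup_{i}Q^{e}(\xi_{i},\sigma))$ by $r^{d-1}e(\sigma/(2\epsilon)) \to 0$. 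Hence $\limsup_{\epsilon}\mathcal{F}^{\omega}_{\epsilon}(u_{\epsilon};A) \leq \tilde{\varphi}(e)r^{d-1}$; and since $\|u_{\epsilon}-T_{s_{0}e}\chi_{e}\|_{L^{1}(A)} \leq 2r^{d-1}\sigma + \|T_{s_{0}e}q^{\epsilon}_{e}-T_{s_{0}e}\chi_{e}\|_{L^{1}(A)} < \delta$ for small $\epsilon$, we get $\limsup_{\epsilon} M_{\epsilon}(\delta) \leq \tilde{\varphi}(e) r^{d-1}$, and $\delta$ was arbitrary.

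\emph{Lower bound.} A routine diagonal argument reduces matters to showing: if $\epsilon_{j} \to 0^{+}$ and $u_{j} \in H^{1}(A;[-1,1])$ with $u_{j} \to T_{s_{0}e}\chi_{e}$ in $L^{1}(A)$, then $\liminf_{j}\mathcal{F}^{\omega}_{\epsilon_{j}}(u_{j};A) \geq \tilde{\varphi}(e) r^{d-1}$. Passing to the subsequence realizing this $\liminf$, we may assume $\sup_{j}\mathcal{F}^{\omega}_{\epsilon_{j}}(u_{j};A) < \infty$. With $k$, $\sigma = r/k$, $\xi_{i}$ as above ($\sigma/2 < t$), the cubes $Q^{e}(\xi_{i},\sigma)$ are pairwise disjoint subsets of $A$, so $\mathcal{F}^{\omega}_{\epsilon_{j}}(u_{j};A) \geq \sum_{i}\mathcal{F}^{\omega}_{\epsilon_{j}}(u_{j};Q^{e}(\xi_{i},\sigma))$. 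Fix $\beta \in (0,1)$; applying the fundamental estimate (Appendix~\ref{A: fundamental_estimate}) on each $Q^{e}(\xi_{i},\sigma)$ with inner cube $Q^{e}(\xi_{i},(1-\beta)\sigma)$, using $\|u_{j}-T_{s_{0}e}q^{\epsilon_{j}}_{e}\|_{L^{1}(Q^{e}(\xi_{i},\sigma))}\to 0$, yields $\tilde{u}^{i,j}$ with $\tilde{u}^{i,j} - T_{s_{0}e}q^{\epsilon_{j}}_{e} \in H^{1}_{0}(Q^{e}(\xi_{i},\sigma))$ and
\[
\mathcal{F}^{\omega}_{\epsilon_{j}}(\tilde{u}^{i,j};Q^{e}(\xi_{i},\sigma)) \leq \mathcal{F}^{\omega}_{\epsilon_{j}}(u_{j};Q^{e}(\xi_{i},\sigma)) + \mathcal{F}^{\omega}_{\epsilon_{j}}\bigl(T_{s_{0}e}q^{\epsilon_{j}}_{e}; Q^{e}(\xi_{i},\sigma)\setminus Q^{e}(\xi_{i},(1-\beta)\sigma)\bigr) + o_{j}(1).
\]
Rescaling and using the boundary condition, $\mathcal{F}^{\omega}_{\epsilon_{j}}(\tilde{u}^{i,j};Q^{e}(\xi_{i},\sigma)) \geq \epsilon_{j}^{d-1}\tilde{\Phi}^{\omega}(e,\epsilon_{j}^{-1}\xi_{i},\epsilon_{j}^{-1}Q^{e}(\xi_{i},\sigma))$, which tends to $\tilde{\varphi}(e)\sigma^{d-1}$ by Theorem~\ref{T: other_cubes}; splitting the annulus $Q^{e}(\xi_{i},\sigma)\setminus Q^{e}(\xi_{i},(1-\beta)\sigma)$ into its lateral part and its top/bottom part and applying Fubini, $\limsup_{j}\mathcal{F}^{\omega}_{\epsilon_{j}}(T_{s_{0}e}q^{\epsilon_{j}}_{e}; Q^{e}(\xi_{i},\sigma)\setminus Q^{e}(\xi_{i},(1-\beta)\sigma)) \leq (1-(1-\beta)^{d-1})\sigma^{d-1}C_{\Lambda}$. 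Summing over $i$ and taking $\liminf_{j}$ gives $\liminf_{j}\mathcal{F}^{\omega}_{\epsilon_{j}}(u_{j};A) \geq \tilde{\varphi}(e)r^{d-1} - (1-(1-\beta)^{d-1})r^{d-1}C_{\Lambda}$, and letting $\beta \to 0^{+}$ completes the argument.

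I expect the one delicate point to be exactly this reconciliation of the box $A$ with the cubes governed by Theorem~\ref{T: other_cubes}: in the upper bound it is handled by the seamless gluing of cube minimizers (whose boundary traces are all the common profile $T_{s_{0}e}q^{\epsilon}_{e}$) together with the vanishing of the profile tail on the outer slabs, and in the lower bound by discarding the excess height of $A$ and localizing the fundamental estimate to each small cube — with care taken that the $\limsup/\liminf$ in $\epsilon$ are performed before $\delta \to 0^{+}$, which is what the diagonal reduction in the lower bound accomplishes.
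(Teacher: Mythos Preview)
Your proof is correct. It differs from the paper's in how the box $A = Q(x_{0},r)\oplus_{e}(s_{0}-t,s_{0}+t)$ is related to the cubes governed by Theorem~\ref{T: other_cubes}. The paper handles the upper bound by minimizing directly on a single thin box $A_{n} = Q(x_{0},r)\oplus_{e}(s_{0}-n^{-1}t,s_{0}+n^{-1}t)$, and the lower bound by one application of the fundamental estimate on $A$ itself (with inner set $U_{\alpha} = Q(x_{0},\alpha r)\oplus_{e}(s_{0}-\alpha t,s_{0}+\alpha t)$), so that in both directions the comparison is with $\tilde{\Phi}^{\omega}$ on a box of cross-section $Q(x_{0},r)$ but height not equal to $r$. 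This is shorter, but it tacitly uses that $R^{1-d}\tilde{\Phi}^{\omega}(e,R\xi,RB)\to\tilde{\varphi}(e)r^{d-1}$ for boxes $B$ of arbitrary aspect ratio and arbitrary center --- a fact that follows from Theorem~\ref{T: other_cubes} together with the almost-monotonicity in $h$ (Proposition~\ref{P: almost_decreasing_prop}), but is nowhere stated as such. Your tiling of the cross-section into $k^{d-1}$ small cubes $Q^{e}(\xi_{i},\sigma)$ sidesteps this completely: each piece is a genuine cube to which Theorem~\ref{T: other_cubes} applies verbatim, the boundary traces $T_{s_{0}e}q^{\epsilon}_{e}$ match seamlessly across the tiles, and the height discrepancy between $A$ and the tiles is absorbed by the profile tail $e(\sigma/(2\epsilon))\to 0$. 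The cost is an extra limiting parameter ($\sigma\to 0$, respectively $\beta\to 0$) and a finite sum of fundamental-estimate applications in the lower bound, but the argument is entirely self-contained.
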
  

Now we recall the machinery developed in \cite{periodic_paper}.  In what follows, we denote by $c_{W}$ the normalization constant
	\begin{equation*}
		c_{W} = \int_{-1}^{1} \sqrt{W(u)} \, du.
	\end{equation*}

\begin{theorem}[Theorems 3.3 and 3.5, \cite{periodic_paper}]  \label{T: abstract_machine} If $\omega \in \Omega$ and $(\epsilon_{j})_{j \in \mathbb{N}} \subseteq (0,\infty)$ satisfies $\lim_{j \to \infty} \epsilon_{j} = 0$, then there is a subsequence $(\epsilon_{j_{k}})_{k \in \mathbb{N}}$ and a functional $\mathscr{I}$ such that $\mathcal{F}^{\omega}_{\epsilon_{j_{k}}} \overset{\Gamma} \to \mathscr{I}$.  In fact, there is a Borel function $\tilde{\psi} : \mathbb{R}^{n} \times S^{d - 1} \to [0,\infty)$ such that
\begin{equation*}
\sqrt{\lambda} c_{W} \leq \tilde{\psi}(x,e) \leq \sqrt{\Lambda} c_{W} \quad \text{if} \, \, x \in \mathbb{R}^{d}, \, \, e \in S^{d - 1} 
\end{equation*}
and
\begin{equation*}
\mathscr{I}(u; A) = \left\{ \begin{array}{ r l }
			\int_{\partial^{*} \{u = 1\}} \tilde{\psi}(\xi,\nu_{\{u = 1\}}(\xi)) \, \mathcal{H}^{d - 1}(d \xi), & u \in BV(A; \{-1,1\}) \\
			+ \infty, & \text{otherwise}.
		\end{array} \right.
\end{equation*}
Moreover, $\tilde{\psi}$ can be recovered from $\mathscr{I}$ via the following derivation formula: if $x \in \mathbb{R}^{d}$ and $e \in S^{d - 1}$, then
\begin{equation} \label{E: derivation}
\tilde{\psi}(x,e) = \limsup_{\rho \to 0^{+}} \rho^{1 - d} \inf \{ \mathscr{I}(u; Q^{e}(x,\rho)) \, \mid \, u = T_{x}\chi_{e} \, \, \text{in} \, \, \mathbb{R}^{d} \setminus Q^{e}(x,\rho) \}.
\end{equation}
\end{theorem}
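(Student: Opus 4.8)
The plan is to deduce the statement from the general compactness of $\Gamma$-convergence together with an abstract integral representation theorem on sets of finite perimeter; this is carried out in detail in \cite{periodic_paper}, and I only indicate the structure of the argument. Since $\Gamma$-convergence of functionals on the separable metric space $L^{1}_{\mathrm{loc}}(\mathbb{R}^{d})$ is sequentially compact, one fixes a countable base $\mathcal{D}$ for the open subsets of $\mathbb{R}^{d}$ (finite unions of open cubes with rational vertices and side lengths) and extracts, by a diagonal procedure, a subsequence $(\epsilon_{j_{k}})_{k \in \mathbb{N}}$ along which $\mathcal{F}^{\omega}_{\epsilon_{j_{k}}}(\,\cdot\,; A)$ $\Gamma$-converges to a limit $\mathscr{I}(\,\cdot\,;A)$ simultaneously for every $A \in \mathcal{D}$. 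One then extends $\mathscr{I}(u;\,\cdot\,)$ to arbitrary open sets by inner regularity, in the manner of De Giorgi and Letta; since $\Gamma$-limits are lower semicontinuous, $\mathscr{I}$ is automatically $L^{1}_{\mathrm{loc}}$-lower semicontinuous.

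Next one identifies the structure of $\mathscr{I}$. From $\varphi^{\omega}(x,p) \geq \sqrt{\lambda}\|p\|$ one has the lower bound $\mathcal{F}^{\omega}_{\epsilon}(u;A) \geq \int_{A}\bigl(\tfrac{\epsilon}{2}\lambda\|Du\|^{2} + \epsilon^{-1}W(u)\bigr)\,dx$, so the classical Modica--Mortola compactness theorem shows that every $L^{1}$-limit of a family with equibounded energy belongs to $BV(A;\{-1,1\})$; hence $\mathscr{I}(u;A) = +\infty$ unless $u \in BV(A;\{-1,1\})$. For $u \in BV(A;\{-1,1\})$, the two-sided bound $\sqrt{\lambda}\|p\| \leq \varphi^{\omega}(x,p) \leq \sqrt{\Lambda}\|p\|$, the monotonicity of $\Gamma$-limits, and the classical computation of the $\Gamma$-limits of the constant-metric functionals together show that $\mathscr{I}(u;A)$ is comparable to $\mathcal{H}^{d-1}(\partial^{*}\{u=1\}\cap A)$, with the constants $\sqrt{\lambda}c_{W}$ and $\sqrt{\Lambda}c_{W}$. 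Moreover, $A \mapsto \mathscr{I}(u;A)$ is the restriction of a finite Borel measure: superadditivity on disjoint open sets is immediate, subadditivity follows from the fundamental estimate of $\Gamma$-convergence (Appendix \ref{A: fundamental_estimate}), which allows near-optimal recovery sequences on two overlapping sets to be glued together at asymptotically negligible cost, and locality is clear from the construction. These are exactly the hypotheses under which the abstract integral representation theorem for functionals on sets of finite perimeter applies; it produces a Borel function $\tilde{\psi} : \mathbb{R}^{d} \times S^{d-1} \to [0,\infty)$, with the same bounds, such that $\mathscr{I}(u;A) = \int_{\partial^{*}\{u=1\}\cap A}\tilde{\psi}(\xi,\nu_{\{u=1\}}(\xi))\,\mathcal{H}^{d-1}(d\xi)$. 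The derivation formula \eqref{E: derivation} is simply the cell formula through which $\tilde{\psi}$ is produced: one blows up $\mathscr{I}$ at $x$ along the direction $e$ and compares with the competitor $T_{x}\chi_{e}$, whose reduced boundary inside $Q^{e}(x,\rho)$ is the portion of the hyperplane $\{\langle \xi - x, e\rangle = 0\}$ of $\mathcal{H}^{d-1}$-measure $\rho^{d-1}$ and constant normal $e$, so that $\rho^{1-d}$ times the constrained infimum converges, in the sense of $\limsup$, to $\tilde{\psi}(x,e)$.

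The main difficulty is the integral representation step, and within it the verification that $A \mapsto \mathscr{I}(u;A)$ is subadditive: this requires controlling the energy of interpolated configurations in thin shells separating two open sets, which is precisely the content of the fundamental estimate proved in Appendix \ref{A: fundamental_estimate}, and is the reason that estimate is needed here. A minor point is that \eqref{E: derivation} is stated with a $\limsup$; this is the version that the blow-up argument delivers directly, whereas replacing it by a genuine limit --- not needed for our purposes --- would call for additional regularity of $\tilde{\psi}$. All of these steps are carried out in \cite{periodic_paper}, so we simply cite Theorems 3.3 and 3.5 there.
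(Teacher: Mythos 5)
The paper offers no proof of this statement at all: it is imported verbatim as Theorems 3.3 and 3.5 of \cite{periodic_paper}, and your proposal likewise ends by deferring to that citation, so the two "proofs" coincide. Your accompanying sketch (diagonal extraction over a countable base, De Giorgi--Letta extension, Modica--Mortola compactness forcing $BV(A;\{-1,1\})$, the fundamental estimate for measure-theoretic subadditivity, and the integral representation with its cell/derivation formula) is an accurate outline of how the cited reference actually establishes the result.
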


\begin{remark} \label{R: neighborhood_trick}  In the derivation formula \eqref{E: derivation}, it suffices to consider functions $u \in L^{1}_{\text{loc}}(\mathbb{R}^{d})$ such that $u = T_{x}\chi_{e}$ in a neighborhood of $\mathbb{R}^{d} \setminus Q^{e}(x,\rho)$.  See \cite[Equation 4.5]{periodic_paper}.  \end{remark}  

We now apply Proposition \ref{P: planar_convergence} and Theorem \ref{T: abstract_machine} to prove Theorem \ref{T: main result}:

\begin{proof}[Proof of Theorem \ref{T: main result}]   Statement (i) is more-or-less immediate from the lower bound $\sqrt{\lambda} \|\cdot\| \leq \varphi^{\omega}(x,\cdot)$ in $\mathbb{R}^{d}$.  From this, we see that if $(u_{\epsilon})_{\epsilon > 0} \subseteq H^{1}(A; [-1,1])$, then
	\begin{equation*}
		\sup \left\{ \int_{A} \left( \frac{\lambda \epsilon\|Du(x)\|^{2}}{2} + \epsilon^{-1}W(u(x)) \right) \, dx \, \mid \, \epsilon > 0\right\} \leq \sup \left\{\mathcal{F}^{\omega}_{\epsilon}(u_{\epsilon};A) \, \mid \, \epsilon > 0\right\}.
	\end{equation*}
Therefore, (i) follows from known results in the spatially homogeneous context (cf.\ \cite{alberti guide}).  

Now we turn to (ii) and (iii).  To see that $\mathcal{F}^{\omega}_{\epsilon} \overset{\Gamma}\to \mathscr{E}$, it is enough to show that any sequence $(\epsilon_{j})_{j \in \mathbb{N}}$ satisfying $\lim_{j \to \infty} \epsilon_{j} = 0$ has a further subsequence $(\epsilon_{j_{k}})_{k \in \mathbb{N}}$ such that $\mathcal{F}^{\omega}_{\epsilon_{j_{k}}} \overset{\Gamma}\to \mathscr{E}$ as $k \to \infty$.  In particular, it only remains to show that if $\mathscr{I}$ is obtained as in Theorem \ref{T: abstract_machine}, then $\mathscr{I} = \mathscr{E}$.  Equivalently, we will show that the function $\tilde{\psi}$ in that theorem satisfies $\tilde{\psi}(x,\cdot) = \tilde{\varphi}$ in $\mathbb{R}^{d}$.    

To avoid clunky notation, we won't distinguish between the sequence $(\epsilon_{j})_{j \in \mathbb{N}}$ and its subsequence in the sequel, that is, we will write $(\epsilon_{j})_{j \in \mathbb{N}}$ instead of $(\epsilon_{j_{k}})_{k \in \mathbb{N}}$.

Fix $x \in \mathbb{R}^{d}$.  First, we show that $\tilde{\psi}(x,\cdot) \leq \tilde{\varphi}$ pointwise.  Suppose $e \in S^{d-1}$.  By setting $u = T_{x}\chi_{e}$ in \eqref{E: derivation}, we find
\begin{equation*}
\tilde{\psi}(x,e) \leq \limsup_{\rho \to 0^{+}} \rho^{1 - d} \mathscr{I}(T_{x}\chi_{e}; Q^{e}(x,\rho)).
\end{equation*}
By Proposition \ref{P: planar_convergence}, $\mathscr{I}(T_{x}\chi_{e}; Q^{e}(x,\rho)) = \tilde{\varphi}(e) \rho^{d - 1}$ since $\omega \in \hat{\Omega}$.  Thus, we obtain the upper bound
\begin{equation*}
\tilde{\psi}(x,e) \leq \tilde{\varphi}(e).
\end{equation*}

The complementary inequality follows from our work on the thermodynamic limit.  By Remark \ref{R: neighborhood_trick}, it suffices to show that if $u \in L^{1}_{\text{loc}}(\mathbb{R}^{d})$ and $u = T_{x}\chi_{e}$ in $\mathbb{R}^{d} \setminus \overline{Q^{e}(x, 
\alpha \rho)}$ for some $\alpha \in (0,1)$, then
\begin{equation} \label{E: abstract_lower_bound}
\mathscr{I}(u; Q^{e}(x,\rho)) \geq \tilde{\varphi}(e) \rho^{d -1}.
\end{equation}  

Fix such a $u$ and $\alpha$.  Since \eqref{E: abstract_lower_bound} holds trivially otherwise, we can assume 
	\begin{equation*}
		u \in BV(Q^{e}(x,\rho); \{-1,1\}), \quad \mathscr{I}(u;Q^{e}(x,\rho)) < \infty.
	\end{equation*}
Since $\mathcal{F}^{\omega}_{\epsilon_{j}} \overset{\Gamma}\to \mathscr{I}$, we can fix a sequence $(u_{\epsilon_{j}})_{j \in \mathbb{N}} \subseteq H^{1}(Q^{e}(x,\rho); [-1,1])$ satisfying the following conditions:
\begin{align*}
&\lim_{j \to \infty} \|u_{\epsilon_{j}} - u\|_{L^{1}(Q^{e}(x,\rho))} = 0 \\
&\mathscr{I}(u; Q^{e}(x,\rho)) = \lim_{j \to \infty} \mathcal{F}^{\omega}_{\epsilon_{j}}(u_{\epsilon_{j}}; Q^{e}(x,\rho))
\end{align*}
We claim that \eqref{E: abstract_lower_bound} now follows easily from the fundamental estimate.  

Indeed, for each $\beta \in (\alpha,1)$, introduce $\epsilon$-independent open sets $U'_{\beta} = Q^{e}(x,\rho)$, $U_{\beta} = Q^{e}(x, \beta \rho)$, and $V_{\beta} = U'_{\beta} \setminus \overline{U_{\beta}}$.  Since $u = T_{x}\chi_{e}$ in $V_{\beta}$, the fundamental estimate applies: there is a family of cut-off functions $(\psi_{j,\beta})_{j \in \mathbb{N}} \subseteq C^{\infty}_{c}(U'_{\beta}; [0,1])$ such that $\psi_{j,\beta} \equiv 1$ in $U_{\beta}$ and
\begin{equation*}
\mathcal{F}^{\omega}_{\epsilon_{j}}(\psi_{j,\beta} u_{\epsilon_{j}} + (1 - \psi_{j,\beta}) T_{x}q^{\epsilon_{j}}_{e}; U'_{\beta}) \leq \mathcal{F}^{\omega}_{\epsilon_{j}}(u_{\epsilon_{j}}; U'_{\beta}) + \mathcal{F}^{\omega}_{\epsilon_{j}}(T_{x}q^{\epsilon_{j}}_{e}; V_{\beta}) + o(1)
\end{equation*}   
as $j \to \infty$.  By the definition of $U'_{\beta}$ and our results on the thermodynamic limit, we find
\begin{equation*}
\tilde{\varphi}(e) \rho^{d - 1} \leq \mathscr{I}(u; Q^{e}(x,\rho)) + \limsup_{j \to \infty} \mathcal{F}^{\omega}_{\epsilon_{j}}(T_{x}q_{e}^{\epsilon_{j}} ;V_{\beta}).
\end{equation*}
Sending $\beta \to 1^{-}$ and observing that $\lim_{\beta \to 1^{-}} \limsup_{j \to \infty} \mathcal{F}^{\omega}_{\epsilon_{j}}(T_{x}q^{\epsilon_{j}}_{e}; V_{\beta}) = 0$, we conclude
\begin{equation*}
\tilde{\varphi}(e) \rho^{d - 1} \leq \mathscr{I}(u; Q^{e}(x,\rho)).
\end{equation*}
Since this is true independently of the choice of $u$ and $\alpha$, \eqref{E: derivation} implies the inequality
\begin{equation*}
\tilde{\varphi}(e) \leq \tilde{\psi}(x,e).
\end{equation*}

	The results of the previous two paragraphs together show that $\tilde{\varphi}(e) = \tilde{\psi}(x,e)$.  Since $x$ and $e$ were arbitrary, we conclude $\mathscr{I} = \mathscr{E}$ as claimed.
	
	Finally, that the one-homogeneous extension of $\tilde{\varphi}$ is convex follows from Remark 3.6 in \cite{periodic_paper}.  Since \eqref{E: derivation} (or \eqref{E: planar convergence}) does not depend on our choice of $q$, neither does $\tilde{\varphi}$.
\end{proof}  

The only thing left to do is prove Proposition \ref{P: planar_convergence}:

\begin{proof}[Proof of Proposition \ref{P: planar_convergence}]  First, we show
\begin{equation} \label{E: upper bound planar}
\tilde{\varphi}(e) r^{d - 1} \geq \lim_{\delta \to 0^{+}} \limsup_{\epsilon \to 0^{+}} \inf \left\{ \mathcal{F}^{\omega}_{\epsilon}(u; A) \, \mid \, \|u - T_{s_{0} e}\chi_{e}\|_{L^{1}(A)} < \delta \right\}.
\end{equation}  

For each $n \in \mathbb{N}$, define $A_{n} = Q(x_{0},r) \oplus_{e} (s_{0} - n^{-1}t, s_{0} + n^{-1}t)$ and let $(u_{\epsilon}^{(n)})_{\epsilon > 0} \subseteq H^{1}(A_{n}; [-1,1])$ satisfy
\begin{equation*}
\mathcal{F}^{\omega}_{\epsilon}(u_{\epsilon}^{(n)}; A_{n}) = \tilde{\Phi}^{\omega}(e, x_{0} \oplus_{e} s_{0}, A_{n})
\end{equation*}
and $u_{\epsilon}^{(n)} = T_{s_{0}e}q^{\epsilon}_{e}$ on $\partial A_{n}$ for each $\epsilon > 0$.  Extend the functions $(u_{\epsilon}^{(n)})_{\epsilon > 0}$ so that $u_{\epsilon}^{(n)} = T_{s_{0}e} q^{\epsilon}_{e}$ in $\mathbb{R}^{d} \setminus A_{n}$.    

Now observe that
\begin{equation*}
\lim_{\epsilon \to 0^{+}} \|u_{\epsilon}^{(n)} - T_{s_{0}e}\chi_{e}\|_{L^{1}(A)} \leq 2 n^{-1} r^{d - 1}t
\end{equation*}
Moreover, since $\omega \in \tilde{\Omega}$,
\begin{equation*}
\lim_{\epsilon \to 0^{+}} \mathcal{F}^{\omega}_{\epsilon}(u_{\epsilon}^{(n)} ; A) = \lim_{\epsilon \to 0^{+}} \mathcal{F}^{\omega}_{\epsilon}(u_{\epsilon}^{(n)}; A_{n}) + \lim_{\epsilon \to 0^{+}} \mathcal{F}^{\omega}_{\epsilon}(T_{s_{0} e}q^{\epsilon}_{e} ; A \setminus A_{n}) = \tilde{\varphi}(e) r^{d - 1}.
\end{equation*}
Thus, \eqref{E: upper bound planar} follows in the limit $n \to \infty$.    

To prove the lower bound, we use the fundamental estimate.  Fix $(\delta_{j})_{j \in \mathbb{N}} \subseteq (0,\infty)$ such that $\lim_{j \to \infty} \delta_{j} = 0$ and
	\begin{align*}
	\lim_{j \to \infty} \liminf_{\epsilon \to 0^{+}} \inf \left\{ \mathcal{F}^{\omega}_{\epsilon}(u; A) \, \mid \, \|u - T_{s_{0}e}\chi^{e}\|_{L^{1}(A)} < \delta_{j} \right\} &= \text{RHS of} \, \, \eqref{E: planar convergence}
	\end{align*}

For each $j$, pick an $\epsilon_{j} \in (0,\delta_{j}]$ and a function $v_{\epsilon_{j}}$ such that
\begin{align*}
&\mathcal{F}_{\epsilon_{j}}^{\omega}(v_{\epsilon_{j}}; A) \leq \liminf_{\epsilon \to 0^{+}} \inf \left\{ \mathcal{F}_{\epsilon}^{\omega}(u; A) \, \mid \, \|u - T_{s_{0} e}\chi^{e}\|_{L^{1}(A)} < \delta_{j} \right\} + \frac{1}{j} \\
&\|v_{\epsilon_{j}} - T_{s_{0}e}\chi^{e}\|_{L^{1}(A)} < \delta_{j}.
\end{align*}
Note that we make no requirements about boundary conditions.

Fix $\alpha \in (0,1)$ and let $U_{\alpha} = Q(x_{0},\alpha r) \oplus_{e} (s_{0} - \alpha t, s_{0} + \alpha t)$, $U' = A$, and $V_{\alpha} = U' \setminus \overline{U_{\alpha}}$.  By the fundamental estimate, there is a family of cut-off functions $(\psi_{\epsilon_{j}})_{j \in \mathbb{N}} \subseteq C^{\infty}_{c}(U'; [0,1])$ such that $\psi_{\epsilon_{j}} \equiv 1$ on $U_{\alpha}$ and
\begin{equation*}
\mathcal{F}^{\omega}_{\epsilon_{j}}(\psi_{\epsilon_{j}} v_{\epsilon_{j}} + (1 - \psi_{\epsilon_{j}}) T_{s_{0}e}q^{\epsilon_{j}}_{e}; A) \leq \mathcal{F}^{\omega}_{\epsilon_{j}}(v_{\epsilon_{j}}; A) + \mathcal{F}^{\omega}_{\epsilon_{j}}(T_{s_{0}e}q^{\epsilon_{j}}_{e} ; V_{\alpha}) + o(1)
\end{equation*}
as $j \to \infty$.  
Keeping $\alpha$ fixed, letting $j \to \infty$, and appealing to the fact that $\psi_{\epsilon_{j}} v_{\epsilon_{j}} + (1 - \psi_{\epsilon_{j}}) T_{s_{0}e}q^{\epsilon_{j}}_{e}$ equals $T_{s_{0}e}q^{\epsilon_{j}}_{e}$ on $\partial A$, we find
\begin{equation*}
\tilde{\varphi}(e) r^{d - 1} \leq \liminf_{j \to \infty} \mathcal{F}^{\omega}_{\epsilon_{j}}(v_{\epsilon_{j}}; A) + \limsup_{j \to \infty} \mathcal{F}^{\omega}_{\epsilon_{j}}(T_{s_{0}e}q^{\epsilon_{j}}_{e}; V_{\alpha}).
\end{equation*}
Since $\limsup_{\alpha \to 1^{-}} \limsup_{\epsilon \to 0^{+}} \mathcal{F}^{\omega}_{\epsilon}(T_{s_{0}e}q^{\epsilon}_{e}; V_{\alpha}) = 0$, we send $\alpha \to 1^{-}$ to obtain the lower bound in \eqref{E: planar convergence}:
\begin{equation*}
\tilde{\varphi}(e) r^{d - 1} \leq \lim_{\delta \to 0^{+}} \liminf_{\epsilon \to 0^{+}} \inf \left\{ \mathcal{F}^{\omega}_{\epsilon}(u; A) \, \mid \, \|u - T_{s_{0}e}\chi^{e}\|_{L^{1}(A)} < \delta \right\}.
\end{equation*}
Together with \eqref{E: upper bound planar}, this implies \eqref{E: planar convergence} holds.  \end{proof}

\appendix 

\section{The Fundamental Estimate of $\Gamma$-Convergence} \label{A: fundamental_estimate}

We state and prove the version of the fundamental estimate of $\Gamma$-convergence used throughout the paper.  For problems of this type, the fundamental estimate was originally established in \cite{periodic_paper}. 

\begin{theorem}  Assume that the families $(u_{\epsilon})_{\epsilon > 0}, (v_{\epsilon})_{\epsilon > 0} \subseteq H^{1}_{\text{loc}}(\mathbb{R}^{d}; [-1,1])$ and bounded open sets $(U_{\epsilon})_{\epsilon > 0}, (V_{\epsilon})_{\epsilon > 0}, U' \subseteq \mathbb{R}^{d}$ satisfy the following conditions:
\begin{itemize}
\item[(i)] $C_{0} := \sup \left\{ \mathcal{F}^{\omega}_{\epsilon}(u_{\epsilon}; U') + \mathcal{F}^{\omega}_{\epsilon}(v_{\epsilon}; V_{\epsilon}) \, \mid \, \epsilon > 0\right\} < \infty$
\item[(ii)] $U_{\epsilon} \subseteq U$ and $V_{\epsilon} \subseteq V$ for all $\epsilon > 0$, and
\begin{align*}
D &:= \inf \left\{ \text{dist}(U_{\epsilon},\partial U') \, \mid \, \epsilon > 0\right\} > 0 \\
R &:= \sup \left\{\text{diam}(V_{\epsilon}) \, \mid \, \epsilon > 0\right\} < \infty.
\end{align*}
\item[(iii)] There is a $v \in L^{1}_{\text{loc}}(\mathbb{R}^{d}; \{-1,1\})$ such that $\lim_{\epsilon \to 0^{+}} \|v_{\epsilon} - v\|_{L^{1}(V_{\epsilon})} = 0$.
\end{itemize}
If there is a measurable set $E \subseteq \mathbb{R}^{d}$ such that $\mathcal{L}^{d}(E) \leq \zeta$ and $\|u_{\epsilon} - v_{\epsilon}\|_{L^{1}(V_{\epsilon} \setminus E)} \to 0$ as $\epsilon \to 0^{+}$, then there is a constant $C > 0$ depending only on $D$, $\lambda$, $\Lambda$, and $W$ and a family of cut-off functions $(\psi_{\epsilon})_{\epsilon > 0} \subseteq C^{\infty}_{c}(U';[0,1])$ satisfying $\psi_{\epsilon} \equiv 1$ on $U_{\epsilon}$ such that
\begin{equation*}
\mathcal{F}^{\omega}_{\epsilon}(\psi_{\epsilon} u_{\epsilon} + (1 - \psi_{\epsilon}) v_{\epsilon}; U_{\epsilon} \cup V_{\epsilon}) \leq \mathcal{F}^{\omega}_{\epsilon}(u_{\epsilon}; U') + \mathcal{F}^{\omega}_{\epsilon}(v_{\epsilon}; V_{\epsilon}) + C \zeta + o(1).
\end{equation*}  
\end{theorem}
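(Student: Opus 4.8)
The approach follows the classical construction of the fundamental estimate: I will interpolate between $u_{\epsilon}$ and $v_{\epsilon}$ across a thin collar separating $U_{\epsilon}$ from $\partial U'$, selecting the collar, via a pigeonhole argument, so that the energies of $u_{\epsilon}$ and $v_{\epsilon}$ on it, the $L^{1}$-distance between them off $E$, and the measures of both $E$ and the collar itself are all comparatively small there. The genuinely new feature is the exceptional set $E$, on which no control of $u_{\epsilon}-v_{\epsilon}$ is available. The key structural point is that the weight $\epsilon^{-1}$ on $W$ in $\mathcal{F}^{\omega}_{\epsilon}$ forces the interpolation collar to have thickness of order $\epsilon$ (a collar of $\epsilon$-independent width would make $\epsilon^{-1}\int W$ of the interpolant blow up); once this is built in, the portion of the collar meeting $E$ has volume of order $\epsilon\zeta$, and after the factor $\epsilon^{-1}$ is applied this produces exactly the term $C\zeta$, everything else being absorbed into $o(1)$.

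In detail: set $\|W\|_{\infty}=\sup_{[-1,1]}W$ and let $\omega_{W}$ be a concave nondecreasing modulus of continuity of $W$, so $\omega_{W}(2)=\|W\|_{\infty}$. For each $\epsilon>0$ put $N=N(\epsilon)=\lceil\epsilon^{-1}\rceil$ and consider the nested open sets $A^{\epsilon}_{i}=\{x:\mathrm{dist}(x,U_{\epsilon})<iD/(2N)\}$ for $i=0,\dots,N$; by hypothesis (ii) one has $A^{\epsilon}_{N}\subseteq U'$, and (choosing the radii generically so that the $\partial A^{\epsilon}_{i}$ are Lebesgue-null) the collars $S^{\epsilon}_{i}=A^{\epsilon}_{i}\setminus\overline{A^{\epsilon}_{i-1}}$ are essentially disjoint. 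Since $\sum_{i}\mathcal{F}^{\omega}_{\epsilon}(u_{\epsilon};S^{\epsilon}_{i})\le\mathcal{F}^{\omega}_{\epsilon}(u_{\epsilon};U')\le C_{0}$, and likewise $\sum_{i}\mathcal{F}^{\omega}_{\epsilon}(v_{\epsilon};V_{\epsilon}\cap S^{\epsilon}_{i})\le C_{0}$, $\sum_{i}\mathcal{L}^{d}(E\cap S^{\epsilon}_{i})\le\zeta$, $\sum_{i}\|u_{\epsilon}-v_{\epsilon}\|_{L^{1}(V_{\epsilon}\cap S^{\epsilon}_{i}\setminus E)}\le\|u_{\epsilon}-v_{\epsilon}\|_{L^{1}(V_{\epsilon}\setminus E)}$, and $\sum_{i}\mathcal{L}^{d}(S^{\epsilon}_{i})\le\mathcal{L}^{d}(U')$, a pigeonhole argument produces an index $i^{*}=i^{*}(\epsilon)$ for which each of these five quantities, evaluated at $i=i^{*}$, is bounded by a fixed multiple of $N^{-1}$ times the corresponding sum. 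Let $\psi_{\epsilon}\in C^{\infty}_{c}(A^{\epsilon}_{i^{*}};[0,1])$ satisfy $\psi_{\epsilon}\equiv1$ on $A^{\epsilon}_{i^{*}-1}$ and $|D\psi_{\epsilon}|\le CN/D$, and set $w_{\epsilon}=\psi_{\epsilon}u_{\epsilon}+(1-\psi_{\epsilon})v_{\epsilon}$; since $A^{\epsilon}_{i^{*}}\subseteq U'$ and $U_{\epsilon}\subseteq A^{\epsilon}_{0}$, this $\psi_{\epsilon}$ has the required properties. Splitting $U_{\epsilon}\cup V_{\epsilon}$ into the part inside $A^{\epsilon}_{i^{*}-1}$ (where $w_{\epsilon}=u_{\epsilon}$), the collar $V_{\epsilon}\cap S^{\epsilon}_{i^{*}}$, and the part outside $A^{\epsilon}_{i^{*}}$ (where $w_{\epsilon}=v_{\epsilon}$), and using $A^{\epsilon}_{i^{*}-1}\subseteq U'$ and $V_{\epsilon}\setminus A^{\epsilon}_{i^{*}}\subseteq V_{\epsilon}$, everything reduces to showing $\mathcal{F}^{\omega}_{\epsilon}(w_{\epsilon};V_{\epsilon}\cap S^{\epsilon}_{i^{*}})\le C\zeta+o(1)$.

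On the collar I estimate the two parts of the energy separately. For the gradient part I use that $\varphi^{\omega}(x,\cdot)$ is a norm that is convex and bounded by $\sqrt{\Lambda}\|\cdot\|$: writing $Dw_{\epsilon}=\psi_{\epsilon}Du_{\epsilon}+(1-\psi_{\epsilon})Dv_{\epsilon}+(u_{\epsilon}-v_{\epsilon})D\psi_{\epsilon}$ and applying the triangle inequality, convexity of $t\mapsto t^{2}$, and Young's inequality, I obtain
\[
\tfrac{\epsilon}{2}\varphi^{\omega}(x,Dw_{\epsilon})^{2}\le\epsilon\big(\varphi^{\omega}(x,Du_{\epsilon})^{2}+\varphi^{\omega}(x,Dv_{\epsilon})^{2}\big)+\epsilon\Lambda|D\psi_{\epsilon}|^{2}|u_{\epsilon}-v_{\epsilon}|^{2}.
\]
Integrating, the first term is $\le 2\big(\mathcal{F}^{\omega}_{\epsilon}(u_{\epsilon};S^{\epsilon}_{i^{*}})+\mathcal{F}^{\omega}_{\epsilon}(v_{\epsilon};V_{\epsilon}\cap S^{\epsilon}_{i^{*}})\big)=O(C_{0}/N)=o(1)$, while the second is at most $\epsilon\Lambda C^{2}N^{2}D^{-2}\int_{V_{\epsilon}\cap S^{\epsilon}_{i^{*}}}|u_{\epsilon}-v_{\epsilon}|^{2}$ and $\int_{V_{\epsilon}\cap S^{\epsilon}_{i^{*}}}|u_{\epsilon}-v_{\epsilon}|^{2}\le 2\|u_{\epsilon}-v_{\epsilon}\|_{L^{1}(V_{\epsilon}\cap S^{\epsilon}_{i^{*}}\setminus E)}+4\mathcal{L}^{d}(E\cap S^{\epsilon}_{i^{*}})=O\big((\|u_{\epsilon}-v_{\epsilon}\|_{L^{1}(V_{\epsilon}\setminus E)}+\zeta)/N\big)$, so, since $\epsilon N=O(1)$, this contributes $O(\zeta)+o(1)$. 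For the potential part, $|W(w_{\epsilon})-W(u_{\epsilon})|\le\omega_{W}(|u_{\epsilon}-v_{\epsilon}|)$ and the same bound with $v_{\epsilon}$ give $W(w_{\epsilon})\le\psi_{\epsilon}W(u_{\epsilon})+(1-\psi_{\epsilon})W(v_{\epsilon})+\omega_{W}(|u_{\epsilon}-v_{\epsilon}|)$; the first two terms integrated against $\epsilon^{-1}$ are $O(C_{0}/N)=o(1)$, the part of $\epsilon^{-1}\int\omega_{W}(|u_{\epsilon}-v_{\epsilon}|)$ over $E\cap S^{\epsilon}_{i^{*}}$ is $\le\epsilon^{-1}\|W\|_{\infty}\mathcal{L}^{d}(E\cap S^{\epsilon}_{i^{*}})=O(\zeta/(\epsilon N))=O(\zeta)$, and the part over the complement of $E$ is $o(1)$: by concavity of $\omega_{W}$, for every $\tau>0$ it is bounded by $\epsilon^{-1}\omega_{W}(\tau)\mathcal{L}^{d}(S^{\epsilon}_{i^{*}})+\epsilon^{-1}\|W\|_{\infty}\tau^{-1}\|u_{\epsilon}-v_{\epsilon}\|_{L^{1}(V_{\epsilon}\cap S^{\epsilon}_{i^{*}}\setminus E)}$, whose first summand is $O(\omega_{W}(\tau)\mathcal{L}^{d}(U'))$ and whose second is $o(1)$, so sending $\epsilon\to0^{+}$ and then $\tau\to0^{+}$ kills it. Adding the contributions gives $\mathcal{F}^{\omega}_{\epsilon}(w_{\epsilon};V_{\epsilon}\cap S^{\epsilon}_{i^{*}})\le C\zeta+o(1)$ with $C$ depending only on $D$, $\Lambda$ and $\|W\|_{\infty}$, and the claimed inequality follows.

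The main obstacle is precisely the balancing of scales just described: a collar of fixed width is unusable because of the $\epsilon^{-1}$ on $W$, whereas a collar of width $\sim\epsilon$ makes $|D\psi_{\epsilon}|^{2}\sim\epsilon^{-2}$, so the gradient cross-term $\sim\epsilon\cdot\epsilon^{-2}\int_{\mathrm{collar}}|u_{\epsilon}-v_{\epsilon}|^{2}$ remains finite only because the chosen collar carries $L^{1}$-mass of order $\epsilon$ outside $E$ and volume of order $\epsilon\zeta$ inside $E$; forcing all of these scalings to collapse into the single term $C\zeta+o(1)$ is the delicate step. Apart from the presence of $E$, this is the fundamental estimate established in \cite{periodic_paper}, whose proof we follow, and the hypotheses (i)--(iii) are used as there.
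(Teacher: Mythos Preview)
Your proof is correct and follows essentially the same approach as the paper's: build $N_{\epsilon}=\lceil\epsilon^{-1}\rceil$ nested collars between $U_{\epsilon}$ and $\partial U'$, apply a pigeonhole/averaging argument to locate a good collar, and estimate the gradient and potential contributions on that collar. The only differences are organizational. The paper bounds the single quantity $\mathcal{F}^{\omega}_{\epsilon}(w^{\epsilon}_{i};\text{collar}_{i})$, sums over $i$, and then averages, whereas you pigeonhole simultaneously on five auxiliary quantities and then estimate; the paper passes through the Euclidean norm (incurring a factor $\Lambda/\lambda$ and hence the dependence on $\lambda$ in the statement), whereas you work directly with $\varphi^{\omega}$ and avoid $\lambda$; and the paper handles the potential term with a terse appeal to continuity of $W$ and the $L^{1}$ convergence, whereas you make the modulus-of-continuity argument explicit. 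None of these changes the substance of the argument.
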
  

We remark that we sometimes apply the fundamental estimate with $\epsilon$-independent open sets $U$, $U'$, and $V$.  

\begin{proof}  Fix $\epsilon > 0$.  Let $N_{\epsilon} = \lceil \epsilon^{-1} \rceil$ and pick open sets $U_{1}^{\epsilon},U_{2}^{\epsilon},\dots,U_{N_{\epsilon}}^{\epsilon}$ such that
\begin{itemize}
\item[(1)] $U_{1}^{\epsilon} \Subset U_{2}^{\epsilon} \Subset \dots \Subset U_{N_{\epsilon}}^{\epsilon}$
\item[(2)] $U_{1}^{\epsilon} = U_{\epsilon}$ and $U_{N_{\epsilon}}^{\epsilon} = U'$
\item[(3)] $\text{dist}(U_{i}^{\epsilon}, \partial U_{i + 1}^{\epsilon}) \geq \frac{D}{N_{\epsilon}}$ for each $i \in \{1,2,\dots,N_{\epsilon} - 1\}$
\end{itemize}
For each $i \in \{1,2,\dots,N_{\epsilon} - 1\}$, pick $\psi_{i} \in C^{\infty}_{c}(U_{i + 1}^{\epsilon};[0,1])$ such that $\psi^{\epsilon}_{i} \equiv 1$ in $U_{i}^{\epsilon}$ and
\begin{equation*}
\|D\psi_{i}\|_{L^{\infty}(U_{i + 1}^{\epsilon})} \leq \frac{2 N_{\epsilon}}{D}.
\end{equation*} 
For convenience, write $w^{\epsilon}_{i} = \psi^{\epsilon}_{i} u_{\epsilon} + (1 - \psi^{\epsilon}_{i})v_{\epsilon}$.  

For a fixed $i$, we can write
\begin{align*}
\mathcal{F}^{\omega}_{\epsilon}(w_{i}^{\epsilon}; U_{\epsilon} \cup V_{\epsilon}) &\leq \mathcal{F}^{\omega}_{\epsilon}(u_{\epsilon}; U') + \mathcal{F}^{\omega}_{\epsilon}(v_{\epsilon}; V_{\epsilon}) + \mathcal{F}^{\omega}_{\epsilon}(w^{\epsilon}_{i}; (U_{i + 1}^{\epsilon} \setminus U_{i}^{\epsilon}) \cap V_{\epsilon})
\end{align*}
Appealing to the definitions, we estimate the error term as
\begin{equation*}
\mathcal{F}^{\omega}_{\epsilon}(w^{\epsilon}_{i}; (U_{i + 1}^{\epsilon} \setminus U_{i}^{\epsilon}) \cap V_{\epsilon}) \leq e_{1}(i,\epsilon) + e_{2}(i,\epsilon) + e_{3}(i,\epsilon) 
\end{equation*}
where
\begin{align*}
e_{1}(i,\epsilon) &= \epsilon \Lambda \int_{(U_{i + 1}^{\epsilon} \setminus U_{i}^{\epsilon}) \cap V_{\epsilon}} \left(|Du_{\epsilon}(x)|^{2} + |Dv_{\epsilon}(x)|^{2}\right) \, dx \\
e_{2}(i,\epsilon) &= \epsilon^{-1} \int_{(U_{i + 1}^{\epsilon} \setminus U_{i}^{\epsilon}) \cap V_{\epsilon}} W(w^{\epsilon}_{i}(x)) \, dx \\
e_{3}(i,\epsilon) &= \epsilon \Lambda \left(\frac{2 N_{\epsilon}}{D} \right)^{2} \int_{(U_{i + 1}^{\epsilon} \setminus U_{i}^{\epsilon}) \cap V_{\epsilon}} (u_{\epsilon}(x) - v_{\epsilon}(x))^{2} \, dx
\end{align*}
Summing over $i$, we find 
\begin{align*}
\sum_{i = 1}^{N_{\epsilon} - 1} \mathcal{F}^{\omega}_{\epsilon}(w^{\epsilon}_{i}; (U_{i + 1}^{\epsilon} \setminus U_{i}^{\epsilon}) \cap V_{\epsilon}) &\leq \frac{\Lambda}{\lambda} C_{0} + \epsilon \Lambda  \left(\frac{2 N_{\epsilon}}{D} \right)^{2} \int_{U' \cap V_{\epsilon}} (u_{\epsilon}(x) - v_{\epsilon}(x))^{2} \ dx \\
		&\quad + \epsilon^{-1} \sum_{i = 1}^{N_{\epsilon} - 1} \int_{(U_{i + 1}^{\epsilon} \setminus U_{i}^{\epsilon}) \cap V_{\epsilon}} W(w_{i}^{\epsilon}(x)) \, dx
\end{align*}
Thus, there is a $j_{\epsilon} \in \{1,2,\dots,N_{\epsilon} - 1\}$ such that
\begin{align*}
\mathcal{F}^{\omega}_{\epsilon}(w^{\epsilon}_{j_{\epsilon}}; (U_{j_{\epsilon} + 1}^{\epsilon} \setminus U_{j_{\epsilon}}^{\epsilon}) \cap V_{\epsilon}) &\leq  \epsilon \Lambda (N_{\epsilon} - 1)^{-1} \left(\frac{2 N_{\epsilon}}{D}\right)^{2} \int_{V'} (u_{\epsilon}(x) - v_{\epsilon}(x))^{2} \, dx \\
	&\quad + \epsilon^{-1} (N_{\epsilon} - 1)^{-1} \sum_{i = 1}^{N_{\epsilon} - 1} \int_{(U^{\epsilon}_{i + 1} \setminus U^{\epsilon}_{i}) \cap V_{\epsilon}} W(w_{i}^{\epsilon}(x)) \, dx \\
	&\quad + \frac{\Lambda}{\lambda} C_{0} (N_{\epsilon} - 1)^{-1}
\end{align*}
Observe that there is a constant $C_{1} > 0$ depending only on $D$, $\lambda$, and $\Lambda$ such that
\begin{equation*}
\max\left\{\epsilon \Lambda (N_{\epsilon} - 1)^{-1} \left(\frac{2 N_{\epsilon}}{D}\right)^{2},  \epsilon^{-1} (N_{\epsilon} - 1)^{-1} \right\} \leq C_{1}.
\end{equation*}
Moreover, by assumption,
\begin{equation*}
\limsup_{\epsilon \to 0^{+}} \int_{V_{\epsilon}} \left(u_{\epsilon}(x) - v_{\epsilon}(x)\right)^{2} \, dx \leq 4\zeta.
\end{equation*}
Similarly, since $W$ is continuous on $[-1,1]$ and $\mathcal{L}^{d}(V_{\epsilon}) \leq \omega_{d} R^{d}$ independently of $\epsilon$, we find 
\begin{equation*}
\limsup_{\epsilon \to 0^{+}} \left[ \sum_{i = 1}^{N_{\epsilon} - 1} \int_{(U^{\epsilon}_{i + 1} \setminus U^{\epsilon}_{i}) \cap V_{\epsilon}} W(w_{i}^{\epsilon}(x)) \, dx \right] \leq \|W\|_{L^{\infty}([-1,1])} \zeta.
\end{equation*}
Therefore, as $\epsilon \to 0^{+}$,
\begin{equation*}
\mathcal{F}^{\omega}_{\epsilon}(w_{j_{\epsilon}}^{\epsilon}; U_{\epsilon} \cup V_{\epsilon}) \leq \mathcal{F}^{\omega}_{\epsilon}(u_{\epsilon}; U') + \mathcal{F}^{\omega}_{\epsilon}(v_{\epsilon}; V_{\epsilon}) + C_{1}(4 + \|W\|_{L^{\infty}([-1,1])}) \zeta + o(1).
\end{equation*}
The theorem follows by setting $\psi_{\epsilon} = \psi^{\epsilon}_{j_{\epsilon}}$ and $C = C_{1}(4 + \|W\|_{L^{\infty}([-1,1])})$.  
\end{proof} 

\section{Elements of Ergodic Theory} \label{A: ergodic theory}

\subsection{Conditional Expectation and Translations}  In the proof of the thermodynamic limit, the following lemma was used.  In the sequel, if $X$ is a random variable on $(\Omega,\mathscr{B},\mathbb{P})$ and $\mathcal{G} \subseteq \mathcal{F}$ is a sub-$\sigma$-algebra, we let $\mathbb{E}(X \mid \mathcal{G})$ denote some fixed representative of the conditional expectation of $X$ with respect to $\mathcal{G}$.  Recall that $\mathbb{E}(X \mid \mathcal{G})$ is $\mathcal{G}$-measurable by definition, and it is unique up to almost sure equivalence.  

\begin{lemma} \label{L: shifting and conditioning} If $X$ is a random variable on $(\Omega,\mathscr{B},\mathbb{P})$, $e \in S^{d -1}$, and $x \in \langle e \rangle^{\perp}$, then 
\begin{equation*}
\mathbb{E}(X \circ \tau_{x} \, \mid \, \Sigma_{e}) = \mathbb{E}(X \, \mid \, \Sigma_{e}) \quad \mathbb{P}\text{-almost surely.}
\end{equation*}
\end{lemma}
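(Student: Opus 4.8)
We want to show $\mathbb{E}(X\circ\tau_{x}\mid\Sigma_{e}) = \mathbb{E}(X\mid\Sigma_{e})$ almost surely when $x\in\langle e\rangle^{\perp}$. The defining property of conditional expectation says that $\mathbb{E}(X\circ\tau_x\mid\Sigma_e)$ is the $\mathbb{P}$-almost surely unique $\Sigma_e$-measurable random variable $Y$ such that
\begin{equation*}
\int_{E} Y \, d\mathbb{P} = \int_{E} X\circ\tau_x \, d\mathbb{P} \quad \text{for all } E\in\Sigma_e.
\end{equation*}
So it suffices to verify that $Y := \mathbb{E}(X\mid\Sigma_e)$, which is certainly $\Sigma_e$-measurable, satisfies this identity. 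In other words, the whole proof reduces to checking that
\begin{equation*}
\int_{E} \mathbb{E}(X\mid\Sigma_e) \, d\mathbb{P} = \int_{E} X\circ\tau_x \, d\mathbb{P} \quad \text{for every } E\in\Sigma_e.
\end{equation*}

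First I would fix $E\in\Sigma_e$. By definition of $\Sigma_e$ and since $x\in\langle e\rangle^{\perp}$, we have $\tau_x^{-1}(E)=E$, so $\mathbf{1}_E\circ\tau_x = \mathbf{1}_E$ identically. Next, using the change-of-variables formula associated with the measure-preserving map $\tau_x$ (assumption (iii) on $\tau$), together with $\tau_x$-invariance of $E$, I compute
\begin{equation*}
\int_{E} X\circ\tau_x \, d\mathbb{P} = \int_{\Omega} (\mathbf{1}_E\cdot(X\circ\tau_x)) \, d\mathbb{P} = \int_{\Omega} (\mathbf{1}_E\circ\tau_x)\cdot(X\circ\tau_x) \, d\mathbb{P} = \int_{\Omega} (\mathbf{1}_E\cdot X)\circ\tau_x \, d\mathbb{P} = \int_{\Omega} \mathbf{1}_E\cdot X \, d\mathbb{P} = \int_{E} X \, d\mathbb{P},
\end{equation*}
where the next-to-last equality is the statement that $\tau_x$ preserves $\mathbb{P}$. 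Finally, since $E\in\Sigma_e$ and $\mathbb{E}(X\mid\Sigma_e)$ is by definition a version of the conditional expectation of $X$ given $\Sigma_e$, we have $\int_E X\,d\mathbb{P} = \int_E \mathbb{E}(X\mid\Sigma_e)\,d\mathbb{P}$. Chaining these equalities gives exactly the identity displayed above, and uniqueness of conditional expectation yields the claim.

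**Main obstacle.** There is essentially no analytic difficulty here; the statement is soft. The only point requiring a little care is making the change-of-variables computation rigorous for a general (not necessarily integrable a priori) random variable $X$ — one should either assume $X\in L^1(\mathbb{P})$ (which is the only case used in the paper, since the relevant random variables are bounded by $C_\Lambda$) or first establish the identity for bounded $X$ and then pass to the general case by truncation and monotone/dominated convergence. One should also be slightly careful that $\mathbf{1}_E\circ\tau_x = \mathbf{1}_{\tau_x^{-1}(E)} = \mathbf{1}_E$ uses precisely the defining invariance $\tau_x^{-1}(E)=E$ for $x\in\langle e\rangle^\perp$, which is the content of membership in $\Sigma_e$. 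Beyond that, everything is a direct unwinding of definitions.
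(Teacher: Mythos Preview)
Your proof is correct and follows essentially the same approach as the paper: fix a set $E\in\Sigma_e$, use the invariance $\mathbf{1}_E\circ\tau_x=\mathbf{1}_E$ together with measure-preservation of $\tau_x$ to show $\int_E X\circ\tau_x\,d\mathbb{P}=\int_E X\,d\mathbb{P}$, and conclude by uniqueness of conditional expectation. Your additional remark on the integrability hypothesis is a nice touch, but otherwise the arguments coincide line for line.
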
  

\begin{proof}  Recall that $A \in \Sigma_{e}$ if and only if $1_{A} \circ \tau_{x} = 1_{A}$ no matter the choice of $x \in \langle e \rangle^{\perp}$.  Thus, for such an $A$ and $x$, we find
\begin{align*}
\mathbb{E}(X \circ \tau_{x} : A) &= \mathbb{E}((X \circ \tau_{x}) 1_{A}) \\
	&= \mathbb{E}((X \circ \tau_{x})(1_{A} \circ \tau_{x})) \\
	&=\mathbb{E}((X 1_{A})\circ \tau_{x}) \\
	&= \mathbb{E}(X 1_{A}) \\
	&= \mathbb{E}(X : A).
\end{align*}
Since $A$ was arbitrary, uniqueness implies $\mathbb{E}(X \circ \tau_{x} \, \mid \, \Sigma_{e}) = \mathbb{E}(X \, \mid \, \Sigma_{e})$ almost surely.  \end{proof}  

\subsection{Sub-additive ergodic theorem}  Since we will be applying this theorem in a somewhat unconventional ``non-ergodic" form, we will state it carefully and give most of the details of the proof.  

We follow Dal Maso and Modica \cite{nonlinear stochastic homogenization} in the following definition:

\begin{definition} \label{D: sub-additive_process}  Given $e \in S^{d - 1}$, a random function $\psi^{\omega} : \mathcal{U}_{0}^{k} \to \mathbb{R}$ is a sub-additive process over $e$ if it satisfies the following conditions:
\begin{itemize}
\item[(i)] If $A, A_{1},\dots,A_{N} \in \mathcal{U}_{0}^{d - 1}$, $A_{i} \subseteq A$ for each $i$, $A_{i} \cap A_{j}$ for distinct $i,j$, and $\mathcal{L}^{d - 1}(A \setminus (\cup_{j = 1}^{N} A_{j})) = 0$, then
\begin{equation*}
\psi^{\omega} (A) \leq \sum_{i = 1}^{N} \psi^{\omega}(A_{i}).
\end{equation*}
\item[(ii)] If $A \in \mathcal{U}_{0}^{d - 1}$ and $x \in \langle e \rangle^{\perp}$, then 
\begin{equation*}
\psi^{\omega}(A + x) = \psi^{\tau_{x}\omega}(A).
\end{equation*}
\end{itemize}
\end{definition}

Here is the version of the sub-additive ergodic theorem we will use.

\begin{theorem} \label{T: sub-additive_ergodic_theorem}  Suppose $\psi^{\omega}$ is a sub-additive process over $e$ and there is a constant $C_{\psi} > 0$ such that
\begin{equation} \label{E: bounded_process}
0 \leq \psi^{\omega}(A) \leq C_{\psi} \mathcal{L}^{d - 1}(A) \quad \text{if} \, \, A \in \mathcal{U}_{0}^{d - 1}.
\end{equation}
Define $\overline{\psi}^{\omega}$ and $\underline{\psi}^{\omega}$ by
\begin{align*}
\overline{\psi}^{\omega} &= \limsup_{R \to \infty} R^{1 - d} \psi^{\omega}(Q(0,R)) \\
\underline{\psi}^{\omega} &= \liminf_{R \to \infty} R^{1 - d} \psi^{\omega}(Q(0,R)).
\end{align*}
Then $\overline{\psi}^{\omega},\underline{\psi}^{\omega}$ are $\Sigma_{e}$-measurable random variables and $\overline{\psi}^{\omega} = \underline{\psi}^{\omega}$ almost surely.  Moreover, there is an event $\Omega_{\psi} \in \Sigma_{e}$ such that $\mathbb{P}(\Omega_{\psi}) = 1$ and if $\omega \in \Omega_{\psi}$ and $Q$ is a cube in $\mathbb{R}^{d - 1}$, then
\begin{equation}
\underline{\psi}^{\omega} \mathcal{L}^{d - 1}(Q) = \lim_{R \to \infty} R^{1- d} \psi^{\omega}(RQ).
\end{equation}
\end{theorem}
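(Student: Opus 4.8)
The plan is to prove this along the standard route for the multiparameter subadditive ergodic theorem (Akcoglu--Krengel), carrying the conditional expectation with respect to $\Sigma_{e}$ throughout so as to land in the stated ``non-ergodic'' form; the two external inputs are the pointwise additive ergodic theorem (Theorem~\ref{T: additive_ergodic_theorem}) and Lemma~\ref{L: shifting and conditioning}. \emph{Invariance and measurability.} First I would show that $\overline{\psi}^{\omega}$ and $\underline{\psi}^{\omega}$ are $\Sigma_{e}$-measurable. Using property~(i) of Definition~\ref{D: sub-additive_process} to cover $Q(0,R)$ by finitely many unit cubes and conversely, together with \eqref{E: bounded_process} to bound the resulting errors, one sees that the $\limsup$ and $\liminf$ as $R\to\infty$ may be taken along $R\in\mathbb{N}$, so both are $\mathscr{B}$-measurable. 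For the invariance, fix $x\in\langle e\rangle^{\perp}$; by property~(ii), $\psi^{\tau_{x}\omega}(Q(0,R))=\psi^{\omega}(Q(0,R)+O_{e}^{-1}(x))$, and since for large $R$ the translated cube is sandwiched between $Q(0,R-c)$ and $Q(0,R+c)$ with $c$ fixed, (i) and \eqref{E: bounded_process} give $|\psi^{\omega}(Q(0,R)+O_{e}^{-1}(x))-\psi^{\omega}(Q(0,R))|=o(R^{d-1})$; dividing by $R^{1-d}$ and passing to the limit shows $\overline{\psi}^{\tau_{x}\omega}=\overline{\psi}^{\omega}$ and $\underline{\psi}^{\tau_{x}\omega}=\underline{\psi}^{\omega}$, hence $\Sigma_{e}$-measurability.

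\emph{The candidate limit.} Set $g_{n}^{\omega}=2^{n(1-d)}\mathbb{E}(\psi^{\omega}(Q(0,2^{n}))\mid\Sigma_{e})$. Splitting $Q(0,2^{n+1})$ into its $2^{d-1}$ subcubes of side $2^{n}$, applying (i), and then using (ii) and Lemma~\ref{L: shifting and conditioning} to collapse each translate under the conditional expectation --- exactly the computation in the proof of Lemma~\ref{L: helpful_ergodic} --- yields $g_{n+1}^{\omega}\le g_{n}^{\omega}$ almost surely. Since $0\le g_{n}^{\omega}\le C_{\psi}$, the sequence converges almost surely and in $L^{1}$ to a $\Sigma_{e}$-measurable limit $\gamma^{\omega}$.

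\emph{The upper bound.} Fix a cube $Q$ and an $n$. For $R$ large, cover $RQ$ by the translates $Q(0,2^{n})+2^{n}O_{e}(z)$, $z\in\mathbb{Z}^{d-1}$, that meet it; the ``interior'' ones number $(1+o(1))R^{d-1}\mathcal{L}^{d-1}(Q)2^{n(1-d)}$, while the ``boundary'' ones, after the $R^{1-d}$ scaling and by \eqref{E: bounded_process}, contribute only $o(1)$. Property~(i) then bounds $R^{1-d}\psi^{\omega}(RQ)$ by $\mathcal{L}^{d-1}(Q)\,2^{n(1-d)}$ times the arithmetic mean of the numbers $\psi^{\tau_{2^{n}O_{e}(z)}\omega}(Q(0,2^{n}))$ over the interior $z$, plus $o(1)$. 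By Theorem~\ref{T: additive_ergodic_theorem} applied to $\omega\mapsto\psi^{\omega}(Q(0,2^{n}))$ --- averaging, if needed, also over the unit cell of continuous translations in $\langle e\rangle^{\perp}$ so that the pertinent invariant $\sigma$-algebra is exactly $\Sigma_{e}$ --- this mean converges almost surely to $\mathbb{E}(\psi^{\omega}(Q(0,2^{n}))\mid\Sigma_{e})$. Hence $\limsup_{R\to\infty}R^{1-d}\psi^{\omega}(RQ)\le\mathcal{L}^{d-1}(Q)\,g_{n}^{\omega}$, and letting $n\to\infty$ gives $\overline{\psi}^{\omega}\le\mathcal{L}^{d-1}(Q)\,\gamma^{\omega}$ (take $Q=Q(0,1)$ to get $\overline{\psi}^{\omega}\le\gamma^{\omega}$). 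Taking expectations, reverse Fatou for the bounded sequence $R^{1-d}\psi^{\omega}(Q(0,R))$ together with $\mathbb{E}g_{n}^{\omega}\to\mathbb{E}\gamma^{\omega}$ forces $\mathbb{E}\overline{\psi}^{\omega}=\mathbb{E}\gamma^{\omega}$, whence $\overline{\psi}^{\omega}=\gamma^{\omega}$ almost surely. Running this over cubes with rational center and side, and extending via (i) and \eqref{E: bounded_process}, produces an event in $\Sigma_{e}$ of full probability on which the $\limsup$ bound holds for every $Q$.

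\emph{The lower bound --- the main obstacle.} What remains is $\liminf_{R\to\infty}R^{1-d}\psi^{\omega}(RQ)\ge\mathcal{L}^{d-1}(Q)\,\gamma^{\omega}$ almost surely; this is the delicate direction, because subadditivity always points the wrong way (it bounds the energy of a large cube \emph{above} by that of smaller ones). I would handle it by the Akcoglu--Krengel device: from the almost sure identity $\overline{\psi}^{\omega}=\gamma^{\omega}$ and the ergodic theorem one shows that, for each $\varepsilon>0$, the density of translations $x\in\langle e\rangle^{\perp}$ for which the ``typical cube'' estimate $\psi^{\tau_{x}\omega}(Q(0,L))\ge(\gamma^{\omega}-\varepsilon)L^{d-1}$ fails tends to $0$ as $L\to\infty$; a greedy, Vitali-type selection then tiles all but an $\varepsilon$-fraction of $RQ$ by translates of such cubes, and combining this with property~(i), nonnegativity, and the bound \eqref{E: bounded_process} on the leftover yields $R^{1-d}\psi^{\omega}(RQ)\ge(1-C\varepsilon)\mathcal{L}^{d-1}(Q)(\gamma^{\omega}-\varepsilon)+o(1)$; sending $\varepsilon\to0$ finishes. (Alternatively, one could first invoke the classical \emph{ergodic} subadditive theorem, e.g.\ that of Dal Maso and Modica, under the ergodic components of $\mathbb{P}$ relative to $\Sigma_{e}$, and recognize the resulting $\Sigma_{e}$-measurable constant as $\gamma^{\omega}$.) Putting the two bounds together, on an event $\Omega_{\psi}\in\Sigma_{e}$ with $\mathbb{P}(\Omega_{\psi})=1$ the limit $\lim_{R\to\infty}R^{1-d}\psi^{\omega}(RQ)$ exists for every cube $Q$ and equals $\gamma^{\omega}\mathcal{L}^{d-1}(Q)=\overline{\psi}^{\omega}\mathcal{L}^{d-1}(Q)=\underline{\psi}^{\omega}\mathcal{L}^{d-1}(Q)$, which is the assertion.
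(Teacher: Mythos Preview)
Your argument is essentially correct, but you are doing far more work than the paper does. The paper's proof treats the multiparameter subadditive ergodic theorem as a black box: after verifying that the $\limsup$/$\liminf$ can be taken along $\mathbb{N}$ (exactly your first step) and that $\overline\psi^{\omega},\underline\psi^{\omega}$ are $\Sigma_{e}$-measurable (again as you do), it simply cites Krengel for the almost sure existence of the limit on each rational cube, intersects over the countable family $\mathcal{D}_{\mathbb{Q}}$, and then extends to arbitrary cubes by the same sandwiching arguments used for measurability. In particular, the paper never identifies the candidate limit $\gamma^{\omega}$ via conditional expectations, never invokes the additive ergodic theorem inside this proof, and does not carry out the Akcoglu--Krengel lower bound at all.

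Your route, by contrast, reproves the subadditive ergodic theorem itself: the ``candidate limit'' and ``upper bound'' paragraphs are exactly the easy direction of Akcoglu--Krengel, and the ``lower bound'' paragraph sketches the hard direction. This is self-contained and illuminating (and your alternative suggestion, to pass to ergodic components relative to $\Sigma_{e}$, is effectively what citing Krengel amounts to), but it is a genuinely different and much longer proof. One caveat: your appeal to Theorem~\ref{T: additive_ergodic_theorem} is not quite licit as written, since in the paper that theorem is stated only for indicator functions under the full $\mathbb{R}^{d}$-action with an ergodic conclusion; you need the non-ergodic Wiener/Birkhoff theorem for bounded functions under the $\langle e\rangle^{\perp}$-action, with limit $\mathbb{E}(\,\cdot\mid\Sigma_{e})$. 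That result is standard, and you flag the issue, but it should be stated explicitly rather than pointed at.
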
  

The fact that $\overline{\psi}$, $\underline{\psi}$, and $\Omega_{\psi}$ are $\Sigma_{e}$-measurable is used in Lemma \ref{L: helpful_ergodic} and Theorem \ref{T: thermodynamic_limit}.  

\begin{proof}  First, we claim that 
\begin{align*}
\overline{\psi}^{\omega} &= \limsup_{\mathbb{N} \ni N \to \infty} N^{1 - d} \psi^{\omega}(Q(0,N)) \\
\underline{\psi}^{\omega} &= \liminf_{\mathbb{N} \ni N \to \infty} N^{1 - d} \psi^{\omega}(Q(0,N)).
\end{align*}
Indeed, this follows immediately from \eqref{E: bounded_process} and condition (i) in the definition of sub-additive process.  If $N = \lfloor R \rfloor$, then
\begin{align*}
(N + 1)^{1- d} \psi^{\omega}(Q(0,N + 1)) - C \left\{1 - \left(\frac{R}{N + 1}\right)^{d - 1}\right\} \leq R^{1 - d} \psi^{\omega}(Q(0,R)) \\
R^{1 - d} \psi^{\omega}(Q(0,R)) \leq N^{1 - d} \psi^{\omega}(Q(0,N)) +  C \left\{1 - \left(\frac{N}{R}\right)^{d - 1}\right\}.
\end{align*}
Thus, the limit inferior and limit superior do not change if we restrict $R$ to $\mathbb{N}$.  

Now we claim that $\overline{\psi}^{\omega}$ and $\underline{\psi}^{\omega}$ are $\Sigma_{e}$-measurable.  This follows from condition (ii) in the definition of sub-additive process.  Suppose $x \in \langle e \rangle^{\perp}$.  Notice that $Q(0,N) \subseteq Q(x,N + |x|_{\infty})$.  Moreover, $\frac{N + |x|_{\infty}}{N} \to 1$ as $N \to \infty$.  Using \eqref{E: bounded_process}, we find \begin{equation*}
\psi^{\omega}(Q(x,N + |x|_{\infty})) \leq \psi^{\omega}(Q(0,N)) + C_{\psi} ((N + |x|_{\infty})^{d - 1} - N^{d - 1}).
 \end{equation*}
 Therefore, from the equality $\psi^{\tau_{x}\omega}(Q(0,N + |x|_{\infty})) = \psi^{\omega}(Q(x,N + |x|_{\infty}))$, we deduce that $\overline{\psi}^{\tau_{x} \omega} \leq \overline{\psi}^{\omega}$ and $\underline{\psi}^{\tau_{x}\omega} \leq \underline{\psi}^{\omega}$.   Replacing $x$ with $-x$ yields the complementary inequality.  Thus, $\overline{\psi}^{\omega}$ and $\underline{\psi}^{\omega}$ are $\Sigma_{e}$-measurable. 

Let $\Omega^{(1)} = \{\omega \in \Omega \, \mid \, \overline{\psi}^{\omega} = \underline{\psi}^{\omega}\}$.  
 By the (multi-parameter) sub-additive ergodic theorem (cf.\ \cite{krengel}), $\mathbb{P}(\Omega^{(1)}) = 1$.  In other words, $\overline{\psi}^{\omega} = \underline{\psi}^{\omega}$ almost surely.  
 
 Next, let $\mathcal{D}_{\mathbb{Q}}$ be the family of all open cubes in $\mathbb{R}^{d -1}$ with rational endpoints, that is, $\mathcal{D}_{\mathbb{Q}} = \{Q(x,\rho) \, \mid \, x \in \mathbb{Q}^{d - 1}, \, \, \rho \in \mathbb{Q} \cap (0,\infty)\}$.  Define the event $\Omega^{(2)}$ by
 \begin{equation*}
 \Omega^{(2)} = \bigcap_{Q \in \mathcal{D}_{\mathbb{Q}}} \left\{ \omega \in \Omega \, \mid \, \underline{\psi}^{\omega}\mathcal{L}^{d -1}(Q) = \lim_{\mathbb{N} \ni N \to \infty} N^{1- d} \psi^{\omega}(NQ) \right\}
 \end{equation*}
 The sub-additive ergodic theorem implies $\Omega_{h}^{(2)}$ is a countable intersection of probability one events.  Therefore, $\mathbb{P}(\Omega^{(2)}) = 1$.  Arguing as before, we see that $\Omega^{(2)}$ can alternatively be characterized as follows:
 \begin{equation*}
  \Omega^{(2)} = \bigcap_{Q \in \mathcal{D}_{\mathbb{Q}}} \left\{ \omega \in \Omega \, \mid \, \underline{\psi}^{\omega}\mathcal{L}^{d -1}(Q) = \lim_{R \to \infty} R^{1- d} \psi^{\omega}(RQ) \right\}
   \end{equation*}  Similarly, the arguments used to show $\underline{\psi}^{\omega}$ and $\overline{\psi}^{\omega}$ are $\Sigma_{e}$-measurable can be adapted to prove $\Omega^{(2)} \in \Sigma_{e}$.  
   
 Finally, let $\mathcal{D} = \{Q(x,\rho) \, \mid \, x \in \mathbb{R}^{d - 1}, \, \rho > 0\}$.  We claim that 
 \begin{equation*}
 \Omega^{(2)} = \bigcap_{Q \in \mathcal{D}} \left\{ \omega \in \Omega \, \mid \, \underline{\psi}^{\omega}\mathcal{L}^{d -1}(Q) = \lim_{R \to \infty} R^{1- d} \psi^{\omega}(RQ) \right\}.
 \end{equation*}
 This follows since we can approximate any cube in $\mathcal{D}$ by a cube in $\mathcal{D}_{\mathbb{Q}}$.  As the arguments are similar to the ones already exposed, we omit the details.
 
 The proposition follows with $\Omega_{\psi} = \Omega^{(1)} \cap \Omega^{(2)}$.  
 \end{proof} 
 
 In Section \ref{S: other_cubes}, we will use the following version of Birkhoff's ergodic theorem.  Since we use the fact that the event on which it holds is in $\Sigma$, we provide a complete statement and sketch the proof:
 
\begin{theorem} \label{T: additive_ergodic_theorem}  If $E \in \mathscr{B}$, then there is an $\Omega_{E} \in \Sigma$ satisfying $\mathbb{P}(\Omega_{E}) = 1$ such that if $Q$ is any open cube in $\mathbb{R}^{d}$, then
\begin{equation*}
\mathbb{P}(E) \mathcal{L}^{d}(Q) = \lim_{R \to \infty} R^{-d} \mathcal{L}^{d}(\{y \in RQ \, \mid \, \tau_{y} \omega \in E\}) \quad \text{if} \, \, \omega \in \Omega_{E}.
\end{equation*}  
\end{theorem}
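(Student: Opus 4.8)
The plan is to deduce this from the multi-parameter pointwise ergodic theorem applied to the indicator $1_{E}$, reducing the uncountable family of cubes to a countable one by a Riemann-sum sandwiching argument of exactly the kind already carried out in Lemma~\ref{L: orientation} and Proposition~\ref{P: other_cubes 1}, and then checking translation invariance of the resulting event by hand.

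First I would fix the countable family $\mathcal{D}_{\mathbb{Q}}$ of open cubes in $\mathbb{R}^{d}$ with rational centers, rational side lengths, and sides parallel to the fixed basis. By the multi-parameter ergodic theorem (cf.\ \cite{krengel}), for each fixed bounded set $B$ with $\mathcal{L}^{d}(\partial B) = 0$ one has $R^{-d}\mathcal{L}^{d}(\{y \in RB \mid \tau_{y}\omega \in E\}) \to \mathcal{L}^{d}(B)\,\mathbb{E}(1_{E} \mid \Sigma)$ almost surely as $R \to \infty$, the case of a cube not containing the origin being reduced to boxes anchored at a corner by inclusion-exclusion and the passage from integer to real scales being handled exactly as in the proof of Theorem~\ref{T: sub-additive_ergodic_theorem}. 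By the ergodicity assumption, $\mathbb{E}(1_{E} \mid \Sigma) = \mathbb{P}(E)$ almost surely. Intersecting the corresponding probability-one events over the countable family $\mathcal{D}_{\mathbb{Q}}$ yields an event $\Omega_{E}^{0}$ with $\mathbb{P}(\Omega_{E}^{0}) = 1$ on which $R^{-d}\mathcal{L}^{d}(\{y \in RQ \mid \tau_{y}\omega \in E\}) \to \mathbb{P}(E)\mathcal{L}^{d}(Q)$ for every $Q \in \mathcal{D}_{\mathbb{Q}}$.

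Next I would upgrade to arbitrary cubes. Since $1_{E} \geq 0$, the quantity $B \mapsto R^{-d}\mathcal{L}^{d}(\{y \in RB \mid \tau_{y}\omega \in E\})$ is monotone under inclusion and additive over essentially disjoint unions, so any open cube $Q$ (of any orientation, center and size) can be sandwiched between finite disjoint unions $Q^{-} \subseteq Q \subseteq Q^{+}$ of cubes from $\mathcal{D}_{\mathbb{Q}}$ with $\mathcal{L}^{d}(Q^{+} \setminus Q^{-})$ as small as we like -- exactly the Riemann-sum construction of Lemma~\ref{L: orientation}, which here is simpler because the nonnegativity of $1_{E}$ gives the required monotonicity of the averages directly. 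Taking $R \to \infty$ on the finitely many constituent cubes and then shrinking the mesh gives $R^{-d}\mathcal{L}^{d}(\{y \in RQ \mid \tau_{y}\omega \in E\}) \to \mathbb{P}(E)\mathcal{L}^{d}(Q)$ for every open cube $Q$, still on $\Omega_{E}^{0}$.

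Finally, I would let $\Omega_{E}$ be the set of $\omega$ for which the displayed limit holds for every open cube; by the sandwiching it is already determined by the countably many conditions indexed by $\mathcal{D}_{\mathbb{Q}}$, hence $\mathscr{B}$-measurable, and it contains $\Omega_{E}^{0}$, so $\mathbb{P}(\Omega_{E}) = 1$. To see $\Omega_{E} \in \Sigma$, fix $x \in \mathbb{R}^{d}$: the group law gives $\mathcal{L}^{d}(\{y \in RQ \mid \tau_{y}(\tau_{x}\omega) \in E\}) = \mathcal{L}^{d}(\{z \in RQ + x \mid \tau_{z}\omega \in E\})$, and since $\mathcal{L}^{d}((RQ + x)\triangle RQ) = O(R^{d-1}) = o(R^{d})$, the limit for $\tau_{x}\omega$ exists and equals that for $\omega$; applying this with $-x$ as well gives $\tau_{x}^{-1}(\Omega_{E}) = \Omega_{E}$. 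I expect the only genuinely non-routine point to be arranging a single event that works simultaneously for all cubes of all orientations, centers and sizes, which is exactly why the argument splits into the ergodic-theorem step on a countable family and the purely deterministic Riemann-sum step; each of these ingredients is standard, the second mirroring arguments already present in Section~\ref{S: other_cubes}.
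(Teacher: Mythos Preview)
Your proposal is correct and follows essentially the same outline as the paper's own argument: obtain convergence on a countable family of cubes via the multiparameter ergodic theorem, extend to all cubes by approximation, and verify that the good event is in $\Sigma$.  The only difference is packaging: the paper observes that $\psi^{\omega}(A) = \mathcal{L}^{d}(\{y \in A \mid \tau_{y}\omega \in E\})$ is itself a (sub-)additive process satisfying Definition~\ref{D: sub-additive_process} with $d-1$ replaced by $d$ and $\langle e \rangle^{\perp}$ replaced by $\mathbb{R}^{d}$, so the rational-to-real extension and the $\Sigma$-measurability of the limiting event both come for free from (the $d$-dimensional analogue of) Theorem~\ref{T: sub-additive_ergodic_theorem}.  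Your version unpacks these steps explicitly, and in the process is a bit more careful about cubes of arbitrary orientation, but the content is the same.
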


The $\Sigma$-measurability of $\Omega_{E}$ is used in the proof of Proposition \ref{P: other_cubes 1} to show that the event $\hat{\Omega}$ in Theorem \ref{T: main result} is itself $\Sigma$-measurable.  

\begin{proof}  Define a sub-additive process $\psi^{\omega} : \mathcal{U}_{0}^{d} \to \mathbb{R}$ by
\begin{equation*}
\psi^{\omega}(A) = \mathcal{L}^{d}(\{y \in A \, \mid \, \tau_{y} \omega \in E\}).
\end{equation*}
It is straightforward to verify that $\psi^{\omega}$ satisfies Definition \ref{D: sub-additive_process} with $d - 1$ replaced by $d$ and $\langle e \rangle^{\perp}$ replaced by $\mathbb{R}^{d}$.  These changes have no bearing on the proof of Theorem \ref{T: sub-additive_ergodic_theorem} other than switching $\Sigma$ for $\Sigma_{e}$.  

As for the value of the limit, we can see that it should be $\mathbb{P}(E)$ by appealing to the dominated convergence theorem and ergodicity.  \end{proof}

\section*{Acknowledgements}  

The author is grateful to P.E.\ Souganidis for introducing him to this subject and suggesting he revisit some related open problems.  Additionally, he thanks Y.\ Bakhtin and N.\ Dirr for helpful suggestions and encouragement.

The author was partially supported by the National Science Foundation Research Training Group grant DMS-1246999.

 \end{document}